\let\over=\@@over \let\overwithdelims=\@@overwithdelims
\let\atop=\@@atop \let\atopwithdelims=\@@atopwithdelims
\let\above=\@@above \let\abovewithdelims=\@@abovewithdelims
\tikzstyle{int}=[draw, fill=blue!20, minimum size=2em]
\tikzstyle{dot}=[circle, draw, fill=blue!20, minimum size=2em]
\tikzstyle{init} = [pin edge={to-,thin,black}]
\newcommand{\eqref}[1]{~(\ref{#1})}
\def\mod{\mathop{\rm mod}}
\def\argmin{\mathop{\rm argmin}}
\def\exp{\mathop{\rm exp}}
\def\EE{\Expect}
\DeclareMathOperator\sign{\rm sign}
\def\PP{\mathbb{P}}
\def\eqdef{\triangleq}
\def\simiid{\stackrel{iid}{\sim}}
\newcommand{\abs}[1]{\left| #1 \right|}
\newcommand{\stepa}[1]{\overset{\rm (a)}{#1}}
\newcommand{\stepb}[1]{\overset{\rm (b)}{#1}}
\newcommand{\stepc}[1]{\overset{\rm (c)}{#1}}
\newcommand{\stepd}[1]{\overset{\rm (d)}{#1}}
\newcommand{\stepe}[1]{\overset{\rm (e)}{#1}}
\newcommand{\floor}[1]{{\left\lfloor {#1} \right \rfloor}}
\newcommand{\ceil}[1]{{\left\lceil {#1} \right \rceil}}
\newcommand{\reals}{\mathbb{R}}
\newcommand{\naturals}{\mathbb{N}}
\newcommand{\Expect}{\mathbb{E}}
\newcommand{\iid}{i.i.d.\xspace}
\newcommand{\pth}[1]{\left( #1 \right)}
\newcommand{\qth}[1]{\left[ #1 \right]}
\newcommand{\sth}[1]{\left\{ #1 \right\}}
\newcommand{\Binom}{\text{Binom}}
\newcommand{\indc}[1]{{\mathbf{1}_{\left\{{#1}\right\}}}}
\definecolor{myblue}{rgb}{.8, .8, 1}
\definecolor{mathblue}{rgb}{0.2472, 0.24, 0.6} 
\definecolor{mathred}{rgb}{0.6, 0.24, 0.442893}
\definecolor{mathyellow}{rgb}{0.6, 0.547014, 0.24}
\newcommand{\sfM}{{\mathsf{M}}}
\newcommand{\sfT}{{\mathsf{T}}}
\newcommand{\calE}{{\mathcal{E}}}
\newcommand{\calN}{{\mathcal{N}}}
\def\unifto{\mathop{{\mskip 3mu plus 2mu minus 1mu%
			\setbox0=\hbox{$\mathchar"3221$}%
			\raise.6ex\copy0\kern-\wd0%
			\lower0.5ex\hbox{$\mathchar"3221$}}\mskip 3mu plus 2mu minus 1mu}}
\def\simleq{{{\mskip 3mu plus 2mu minus 1mu%
			\setbox0=\hbox{$\mathchar"013C$}%
			\raise.2ex\copy0\kern-\wd0%
			\lower0.9ex\hbox{$\mathchar"0218$}}\mskip 3mu plus 2mu minus 1mu}}
\def\simleq{\lesssim}
\def\simgeq{{{\mskip 3mu plus 2mu minus 1mu%
			\setbox0=\hbox{$\mathchar"013E$}%
			\raise.2ex\copy0\kern-\wd0%
			\lower0.9ex\hbox{$\mathchar"0218$}}\mskip 3mu plus 2mu minus 1mu}}
\def\simgeq{\gtrsim}
\newtheorem{theorem}{Theorem}
\newtheorem{lemma}[theorem]{Lemma}
\theoremstyle{definition}
\newtheorem{remark}{Remark}
\newif\ifmapx
\edef\jobnametmp{\expandafter\string\csname ic_apx\endcsname}
\edef\jobnameapx{\expandafter\mkillslash\jobnametmp}
\edef\jobnameexpand{\jobname}
\renewcommand{\hat}{\widehat}
\renewcommand{\tilde}{\widetilde}
\renewcommand{\hat}{\widehat}
\renewcommand{\tilde}{\widetilde}
\newcommand{\subG}{\mathsf{SubG}}
\newcommand{\lcomed}{$k$-medians-hybrid~}
\newcommand{\kmed}{$k$-medians-${\ell}_1$~}
\newcommand{\kmeans}{$k$-means~}
\newcommand{\snr}{\mathsf{SNR}}
\newcommand{\out}{{\mathsf{out}}}
\newcommand{\med}{\mathsf{median}}
\newcommand\identity{1\kern-0.25em\text{l}}
\newcommand{\eord}{\calE^{\sf{ord}}}
\newcommand{\econ}{\calE^{\sf{con}}}
\newcommand{\eEigen}{\calE^{\sf{eigen}}}
\begin{document}
		%
		%
		%
		%
		
		\title{Adversarially robust clustering with optimality guarantees}
		
		\author{Soham Jana, Kun Yang, and Sanjeev Kulkarni\thanks{S.~J. is with the Department of Applied and Computational Mathematics and Statistics, University of Notre Dame, Notre Dame, IN, USA, email: \url{soham.jana@nd.edu}.  K.~Y. completed the work when he was with the Department of Operations Research and Financial Engineering, Princeton University, Princeton, NJ, USA, email: \url{kun88.yang@gmail.com}. S.~K. is with the Department of Electrical and Computer Engineering and Department of Operations Research and Financial Engineering, Princeton University, Princeton, NJ, USA email: \url{kulkarni@princeton.edu}.}
		}
	\maketitle
	
	\begin{abstract}
		We consider the problem of clustering data points coming from sub-Gaussian mixtures. Existing methods that provably achieve the optimal mislabeling error, such as the Lloyd algorithm, are usually vulnerable to outliers. In contrast, clustering methods seemingly robust to adversarial perturbations are not known to satisfy the optimal statistical guarantees. We propose a simple robust algorithm based on the coordinatewise median that obtains the optimal mislabeling rate even when we allow adversarial outliers to be present. Our algorithm achieves the optimal error rate in constant iterations when a weak initialization condition is satisfied. In the absence of outliers, in fixed dimensions, our theoretical guarantees are similar to that of the Lloyd algorithm. Extensive experiments on various simulated and public datasets are conducted to support the theoretical guarantees of our method.
	\end{abstract}
	
	\begin{IEEEkeywords}
		Adversarial outliers, Iterative algorithms, Mislabeling, Robust centroid estimation, Sub-Gaussian mixture models.
	\end{IEEEkeywords}

	%
	\IEEEpeerreviewmaketitle

	\section{Introduction}
	
	\subsection{Problem}
	
	Clustering a set of observations is a classical learning problem in statistics and machine learning \cite{jian2009data}. When the observed data is generated via a mixture of distributions, a large body of research has studied algorithms to help classify points belonging to the same component. This clustering task facilitates the learning of parameters for different mixture components with high accuracy. Applications exist in diverse areas, e.g., organizing wireless sensor networks \cite{ABBASI20072826,sasikumar2012k}, grouping different biological species \cite{maravelias1999habitat,pigolotti2007species},
	medical imaging \cite{ng2006medical,ajala2012fuzzy}, and social
	network analysis \cite{mishra2007clustering,ding2010clustering}. In practice, the data is likely to contain noise and outliers, and clustering techniques must be robust to optimize various learning tasks \cite{dave1997robust,hardin2004outlier,garcia2010review}. Several new techniques have been developed to perform robust clustering to varying degrees \cite{dave1991characterization,dave1993robust,jolion1991robust,krishnapuram1993possibilistic}.
	
	In this paper, we focus on the center-based robust clustering method. Center-based clustering has received significant attention \cite{awasthi2014center,MKCWM2015,anegg2020technique,zhang2022practical}, specifically when the data are distributed according to sub-Gaussian noise around the location parameters \cite{lu2016statistical,srivastava2023robust,zhang2023upper,makarychev2019performance,lyu2022optimal,bakshi2020outlier,abbe2022}. In this setup, one assumes that the underlying cluster components can be identified using centroids of the components distributions. When the number of cluster components is assumed to be known (often denoted by $k$), and initial centroid estimates are available, simple iterative techniques are often used for clustering. Suppose we observe data points $Y_1,\dots,Y_n$ coming from $k$ clusters with centroids $\theta_1,\dots,\theta_k$ respectively. Let $\cup_{h\in \sth{1,\dots k}}T_h^*$ denote the partition of the set of data indices $\{1,\dots,n\}$ that gives us the true (unknown) cluster memberships (i.e., the subset of data coming from the $h$-th cluster is $\sth{Y_i:i\in T_h^*}$). Iterative algorithms largely follow two main steps at each iteration $s\geq 1$: \begin{itemize}
		\item {\it Labeling step:} Given an estimate of the centroids $\hat \theta^{(s)}_h$, construct cluster estimates $T_h^{(s)},h\in\{1,\dots,k\}$ using a suitable cost function;
		
		\item {\it Estimation step:} For each of the clusters, compute the new centroid estimates $\hat \theta^{(s+1)}_h$ using the points from the estimated cluster $\sth{Y_i:i\in T_h^{(s)}}$.
	\end{itemize}
	This process is then repeated until the clusters do not change significantly over subsequent iterations or until preset thresholds, such as the number of iterations, are reached. For performance evaluation, suppose the underlying labels of the points (i.e., the true cluster $T_h^*$ from which the data point originated) and the centroids are known beforehand. In this case, one can estimate the performance of the clustering algorithm using the mislabeling error (the proportion of points that originated from some cluster $T_h^*$ but were not clustered as part of $T_h^{(s)}$ at the end of the iterations) and the centroid estimation error.
	
	The most popular example of iterative clustering is the Lloyd algorithm for $k$-means \cite{awasthi2012center}. This special case of the iterative method is obtained when the labeling step is performed using the squared Euclidean distance ($\ell_2^2$) as the cost function, and the estimation step is performed using the mean of data points in the cluster estimates. The Lloyd algorithm is known to be a {\it greedy method}, as it reduces the within-cluster sum of squared error $\sum_{h\in \sth{1,\dots,k}}\sum_{i\in \hat T_h^{(s)}}\ell_2^2(Y_i,\hat \theta_h^{(s)})$ at each successive iteration $s\geq 1$. Recently, \cite{lu2016statistical} showed that the mislabeling error produced by the Lloyd algorithm is optimal when the data have sub-Gaussian distributions. They also show that when the Lloyd algorithm converges, the final estimates of the centroids are consistent as well. Unfortunately, in the presence of adversarial outliers, the performance of the Lloyd algorithm can be severely compromised. This is a common problem of mean-based clustering algorithms \cite{charikar2001algorithms,olukanmi2017k,gupta2017local} as the naive mean used to produce the centroid estimates has almost no robustness guarantees. This is easily visible in the simple two-cluster setup. For example, suppose an adversary is allowed to add even one outlying observation. In that case, one can place the outlier so far away that the corresponding centroid estimate for whichever cluster the outlier is included in becomes inconsistent. 
	
	A reasonable approach to obtain a robust clustering method that has been considered in the literature is to use a robust centroid estimator in the estimation step. For example, the $k$-medians method considered in \cite{makarychev2019performance,balcan2017differentially} aims to find centers $\hat \theta_1,\dots,\hat\theta_k$ to minimize the sum of Euclidean errors $\sum_{i=1}^n \min_{h\in [k]}\ell_2(Y_i,\hat \theta_h)$ (as opposed to squared errors).  The centroid estimation is equivalent to computing the geometric median (also called the Euclidean median) \cite{bajaj1986proving} for the corresponding clusters \cite{cohen2016geometric}. The geometric median is known to have strong robustness properties such as a $50\%$ breakdown point \cite{lopuhaa1989relation}, meaning that one needs to change at least half the points of any cluster to change the centroid estimates by an arbitrarily large amount. However, computation of the geometric median is difficult, and different approximations have been studied. See \cite{cohen2016geometric,weiszfeld1937point} for related references.
	
	A well-known robust estimator, which provides robust (e.g., $50\%$ breakdown point) location estimates in multiple dimensions and is simple to compute, is the coordinatewise median \cite{bickel1964some}. Given data points from the estimated clusters, the coordinatewise median computes the one-dimensional median on each coordinate. Recently \cite{yin2018byzantine} used the coordinatewise median to achieve a near-optimal rate of parameter estimation in robust distributed learning problems. The use of the coordinatewise median for clustering is also not new. In the iterative clustering scheme mentioned above, the coordinatewise median for centroid estimation can be viewed as using the $\ell_1$ metric (also known as the Manhattan distance or the city-block distance \cite{DESOUZA2004353}), as this estimate $\hat \theta_h^{(s)}$ is given by $\argmin_\theta \sum_{i\in \hat T_h^{(s)}}\ell_1(Y_i,\theta)$. 
	The corresponding greedy clustering algorithm at each iteration reduces the within-cluster sum of error $\sum_{h\in \sth{1,\dots,k}}\sum_{i\in \hat T_h^{(s)}}\ell_1(Y_i,\hat \theta_h^{(s)})$.  This algorithm, which we will refer to as the \kmed algorithm, produces the coordinatewise median of $\{Y_i:i\in T_h^{(s)}\}$ as the centroid estimators \cite{bradley1996clustering} in the estimation step, and assigns each data point to the nearest centroid using the $\ell_1$ distance in the labeling step. Although this algorithm has been used in the literature and inherits some robustness properties from the coordinatewise median, the statistical properties of this greedy clustering algorithm based on the $\ell_1$ metric have been less analyzed. We ask the question: if we have a sub-Gaussian data distribution, can \kmed clustering achieve the optimal mislabeling error rate similar to the Lloyd algorithm? The answer turns out to be no, as we will explain in \prettyref{sec:l1-subopt}. 
	
	To address these various issues, the focus of the current paper is to devise and analyze a fast algorithm to perform robust clustering in the presence of adversarial outliers.
	As a first step in this direction, we propose a hybrid version of $k$-medians that uses the coordinatewise median (i.e., $\ell_1$) for the estimation step but uses the Euclidean distance (i.e., $\ell_2$) in the labeling step.  
	In view of this, we refer to our method as the \lcomed algorithm.
	We analyze the statistical robustness properties of the \lcomed algorithm in terms of the mislabeling and centroid estimation errors
	from a non-asymptotic perspective when the underlying data distribution is assumed to be sub-Gaussian and when adversarial outliers are present.
	
	This paper makes several contributions toward the sub-Gaussian clustering problem in the presence of adversarial outliers. \begin{itemize}
		\item First, we explore the mislabeling guarantees of the traditional $\ell_1$ metric used for robust clustering. We show that when the data is distributed according to sub-Gaussian errors around the centroids, even in the absence of outliers, the \kmed algorithm can not simultaneously guarantee consistent centroid estimates and optimal mislabeling. More specifically, suppose that the data dimension and number of clusters are fixed. Then there is a subset of the parameter space, with the non-vanishing volume on which, whenever the centroids are picked, if the centroid estimates are within a ball of fixed radius around the actual centroids, the expected mislabeling error obtained by \kmed algorithm is strictly suboptimal. 
		\item Secondly, we show that with reasonably weak initialization conditions, the mislabeling error rate produced by the \lcomed algorithm, even in the presence of adversarial outliers, matches the optimal mislabeling rate for sub-Gaussian clustering. With a high probability, the optimal rate is produced in just two iterations, and the worst-case guarantee does not degrade in subsequent iterations. Analysis of such an iterative technique, in particular when outliers are present, is challenging as the consecutive steps are dependent. Due to the high breakdown point of the coordinatewise median, we can allow the total adversarial outliers to be close (but strictly smaller than) to the size of the smallest cluster. In a sense, this is the maximum data perturbation any clustering algorithm can tolerate; if any adversary can add more outliers than the size of the smallest cluster, it can significantly disturb the centroid estimates. 
		\item Additionally, we show that with a high probability, the centroid estimates produced by our algorithm achieve the optimal rate of sub-Gaussian mean estimation, up to a logarithmic factor, when the error distributions are symmetric around the locations. To our knowledge, this is the first adversarially robust clustering algorithm with provably optimal guarantees for mislabeling and consistency guarantees for centroid estimation.
		
	\end{itemize}
	{ The rest of the paper is organized as follows. In \prettyref{sec:l1-subopt} we describe the suboptimal mislabeling error of the \kmed algorithm. Then we present our \lcomed algorithm and implementation in \prettyref{sec:algorithm}. The results related to the statistical guarantees of our algorithm in terms of mislabeling errors and centroid estimation errors have been provided in \prettyref{sec:main-results}. We present our simulation and real data studies in \prettyref{sec:experiments}. Finally, we end the main paper with a sketch of the proofs of our main results in \prettyref{sec:sketch}. The proof of the suboptimality of the \kmed algorithm is provided in \prettyref{app:l1-mislabel}. The proof of our main result, i.e., the mislabeling guarantee of our method \lcomed, is given in \prettyref{app:main-proof-mislabeling}, and the proof of centroid estimation guarantees of our algorithm is presented in \prettyref{app:proof-centroid}.}
	
	\begin{remark}
		Recall, as we mentioned before, that a complete iterative clustering algorithm involves two main components: (a) obtaining an initial cluster assignment or centroid estimates and (b) the iterative components, which gradually improve upon the initial clustering. Our work tries to answer the second half of the problem, i.e., optimizing the clustering outputs when a naive initial clustering is available. Note that without a good initialization, standard iterative methods, such as $k$-means, can get stuck at some suboptimal local solution \cite{lu2016statistical}. Clustering initialization itself is a challenging task in the literature. Classical initialization schemes, e.g., spectral methods \cite{vempala2004spectral}, $(1+\epsilon)$-approximate $k$-means \cite{kumar2004simple}, are not well-equipped to deal with outliers. Unfortunately, constructing robust initialization schemes is beyond the scope of our current work as well. Notably, the recent work \cite{srivastava2023robust} aims to provide a robust initialization method based on semidefinite programming when the data is generated via a mixture of sub-Gaussian distributions. However, their work fails to achieve the optimal mislabeling guarantee. In contrast, our work can be thought of as a follow-up work to theirs, which might utilize their clustering output as initialization and then guarantee an optimal result in a few finite steps. 
	\end{remark}
	
	\subsection{Related works}
	
	A well-known topic close to the clustering problem we analyze is the Gaussian mixture estimation problem. The Gaussian mixture model is a well-studied problem in the literature \cite{pearson1894contributions}. A classical technique for estimating the Gaussian mixtures is the method of moments \cite{day1969estimating,anandkumar2012method,doss2020optimal}. Notably, approximating the Gaussian or sub-Gaussian mixtures does not require any separation condition, as the very close mixture components can be approximated using a single sub-Gaussian component. However, to approximate individual parameters or the cluster memberships, some separation conditions on the clusters are necessary \cite{moitra2010settling,vempala2004spectral,belkin2009learning}.

	We study the additive sub-Gaussian noise model, i.e., points in each cluster are distributed around the centroid with an independent zero mean sub-Gaussian distribution. Let $\Delta$ denote the minimum separation between the different centroids, $\sigma>0$ denote the maximum standard deviation in each coordinate (see \prettyref{sec:model} for more details), and the smallest cluster has at least $n\alpha$ many points. Indeed, if the centroids of any two components of mixtures are within a finite distance of each other, then with constant probability, we will not be able to differentiate between the labels. Thus, to attain a vanishing proportion of mislabeling, we need to allow the signal-to-noise ratio ${\Delta\over \sigma}$ to go to infinity. \cite{lu2016statistical} showed that in the absence of outliers, with good initialization, the Lloyd algorithm produces the optimal mislabeling rate $e^{-(1+o(1)){\Delta^2\over 8\sigma^2}}$. They obtain the result when ${\Delta\over \sigma}\sqrt{\alpha\over{1+{kd\over n}}}$ is large, there are no outliers, and for $n$ samples, they need $O(\log n)$ many steps for convergence. In contrast, our method produces the optimal rate in just two steps, which, in essence, is comparable to the two-round variant of EM in \cite{dasgupta2007probabilistic} for a spherical Gaussian mixture estimator. Recently, \cite{loffler2021optimality} established similar optimality mislabeling rates using spectral initialization technique of \cite{vempala2004spectral} and \cite{chen2021optimal} established a similar rate for anisotropic Gaussian mixtures; however, the robustness of these algorithms are unknown.

	Several previous works have tried to address the problem of robust clustering \cite{cuesta1997trimmed,bojchevski2017robust,zhang2018understanding}. However, none of these works discuss the label recovery guarantees. The closest contender for obtaining the mislabeling error bound, in the presence of adversarial outliers, is arguably \cite[Remark 7]{srivastava2023robust}. When ${\Delta\over \sigma\sqrt d}$ is large, by using a robust spectral initialization and then performing $k$-means, the work guarantees a best mislabeling rate of $e^{-(1+o(1)){\Delta^2\over 33\sigma^2}}$. However, their results \cite[Theorem 2]{srivastava2023robust} require the clusters to be of equal order. In contrast, we allow the minimum cluster proportion $\alpha$ to be of the order $\log n \over n$, and show that our algorithm can achieve the optimal rate $e^{-(1+o(1)){\Delta^2\over 8\sigma^2}}$ when ${\Delta\over \sigma}\sqrt{\alpha\over d}$ is large. When the cluster sizes are of similar order, we require ${\Delta\over \sigma\sqrt{kd}}$ to be large to obtain the optimal rates. The dependency on $d$ in the above requirement for our algorithm is due to the centroid estimation guarantees of the coordinatewise median, which are known to depend on $d$ in the presence of outliers \cite[Proposition 2.1]{chen2018robust}. It might be possible to use other robust alternatives, such as Tukey's Half-space median, which has been proven to show better consistency guarantees \cite[Theorem 2.1]{chen2018robust}. To improve upon the coordinatewise median estimator, for example, convert it into equivariant estimates of multivariate locations, researchers often use transformation and retransformation procedures \cite{chakraborty1999note,chaudhuri1996geometric,chakraborty1996transformation}. However, such analyses are beyond the scope of our current work. 
	
	Notably, a similar technique of clustering using mismatched metrics has been proposed previously in the Partitioning around the medoid (PAM) algorithm \cite{kaufman2009finding,rousseeuw1987clustering}. In this algorithm, one updates the centroid estimates using a point from the data set (these centroid estimates are referred to as the medoids of the clusters) based on some dissimilarity metric instead of using a proper location estimator. For example, \cite{rousseeuw1987clustering} used the $\ell_1$ distance as the dissimilarity metric and argued the robustness of the corresponding $\ell_1$ based PAM algorithm. This is close, but not the same, as choosing the coordinatewise median for the estimation step as done in our algorithm. Whether the PAM algorithm with $\ell_1$ metric provides similar statistical guarantees as our method is an open question; for other centroid-based robust clustering methods, see \cite{appert2021new,klochkov2021robust,brunet2022k}.
	
	For our analysis, we use the adversarial contamination model. In this model, upon observing the true data points, a powerful adversary can add any number of points of their choosing, and our theoretical results depend on the number of outliers added. This contamination model is arguably stronger than the traditional Huber contamination model \cite{huber1965robust,huber1992robust}, which assumes that the contamination originates from a fixed distribution via an \iid mechanism. Our contamination model is similar to the adversarial contamination model studied in \cite{lugosi2021robust,diakonikolas2019robust} for robust mean estimation. In the study of robust learning of Gaussian mixtures, an almost similar adversarial setup has been studied in \cite{bakshi2020outlier}. For robust clustering of Gaussian mixtures, \cite{liu2023robustly} examines a similar contamination model. However, their article considers different loss functions.
	
	{ 
		
		Our work is closely related to the topic of robust mean estimation. In particular, our proof technique demonstrates that we can utilize any robust mean estimation method that accurately estimates the cluster centroids in the presence of outliers to substitute for the coordinatewise median in the "Estimation" step of Algorithm 1, thereby retaining the theoretical guarantees for SubGaussian data clustering. As a result, it might be possible to use other classical robust estimators in the literature, such as the polynomial time algorithm of \cite{diakonikolas2019robust}, the trimmed mean estimator \cite{lugosi2021robust}, or the Tukey's median \cite{chen2018robust}, that aims to obtain dimension-independent error for location estimation. However, the problem with the above estimators is their runtime: the robust method of \cite{diakonikolas2019robust} has a polynomial runtime, whereas the other two estimators involve optimization over an exponential number of vectors in $\reals^d$. The problem of dimension-independent estimation of location is indeed challenging, and recent work \cite{hopkins2019hard} suggests that polynomial time is probably the best we can achieve.
		In comparison, the coordinatewise median estimator operates in a time that is almost linear in the sample size, but its location estimation error scales with the square root of the data dimension. We use the coordinatewise median estimator for our algorithm as (a) we intend to construct a fast iterative clustering algorithm and (b) the simple structure of the coordinatewise median makes the proof of our iterative algorithm simpler. We leave it for the future to investigate whether the above centroid estimators with dimension-independent error guarantees can be used to guarantee an exponentially small mislabeling.
		
		The kernel $k$-means type algorithms \cite{dhillon2004kernel,jayasumana2015kernel} use a kernel $K=\{K_{ij}\}_{i,j=1}^n$ build on $n$ points $\{X_1,\dots,X_n\}$ defined as $K_{ij}=\langle \phi(X_i),\phi(X_j)\rangle$, where $\phi$ is a map lifts the data to a high-dimensional manifold where the clusters are linearly separated, and $\langle\cdot,\cdot\rangle$ is an inner-product there. The benefit of the kernel trick lies in that one only needs to construct the $K$ matrix based on the data points, and an exact knowledge of the map $\phi$ is not required. Finding such maps $\phi$ to guarantee that the clusters in the image of $\phi$ are separated is more challenging; related approaches involve the t-SNE \cite{van2008visualizing}, albeit for projecting lower-dimensional spaces. If one has access to $\phi$, our algorithm can be applied to obtain clustering in the projected space, and its statistical properties can then be analyzed. However, such directions are beyond the scope of this current work, and we only address situations where clusters are already linearly separable.}

	\section{Suboptimality of the $\ell_1$ metric for cluster labeling}
	\label{sec:l1-subopt}
	We begin our analysis by explaining why the $\ell_1$-based greedy clustering algorithm, which also utilizes the coordinatewise median for estimating the centroids, fails to produce an optimal mislabeling. We use the additive Gaussian noise setup to provide scenarios where the suboptimal rates are observed. In other words, we make the following model assumption on the underlying data-generating distribution.
	\begin{align}\label{eq:model-gauss}
		Y_i=\theta_{z_i}+w_i,\quad w_i\simiid \calN(0,\sigma^2 I_{d\times d})  \ i=1,\dots,n,
	\end{align}
	where $z = \sth{z_i}_{i=1}^n\in[k]^n$ denote the underlying unknown component labels of the points. Given label estimates $\hat z=\sth{\hat z_i}_{i=1}^n$, we measure the mislabeling proportion as
	\begin{align}\label{eq:kmed_1}
		\ell(\hat z,z)=\frac 1n \sum_{i=1}^n \indc{z_i\neq \hat z_i}.
	\end{align}
	We intend to construct a lower bound for the expected mislabeling error, i.e., $\frac 1n \sum_{i=1}^n \PP\qth{z_i\neq \hat z_i}
	$. Given any data point $Y_i$, it is assigned the label $\hat z_i = h$ when the centroid estimate for cluster $h$ is the closest to it among all the other centroid estimates. When the centroid estimates are consistent, assigning the labels to the data points is essentially equivalent to labeling the data points according to the Voronoi regions of the true centroids for the metric used for clustering. It has already been established in the literature (while proving the optimality of $k$-means \cite{lu2016statistical}) that the $\ell_2$ based clustering is optimal for labeling Gaussian mixtures. As a result, for any metric with different Voronoi regions compared to the $\ell_2$ based Voronoi regions, one may suspect suboptimal mislabeling. However, there appears to be no study in the literature on this topic. Nonetheless, the fact that the Voronoi regions of $\ell_1$ and $\ell_2$ metrics are different, observed previously in the literature \cite{chew1985voronoi,klein1989concrete}, supplied us with the intuition behind proving suboptimality of the $\ell_1$ based clustering. Before jumping into the formal statement, we do a quick example using $k,d=2$. In two dimensions, the Voronoi regions of $\ell_1$ and $\ell_2$ metrics are strictly different if the centroids do not lie on either of the axes or on either the 45-degree or the 135-degree lines. As a result, to demonstrate a difference in performance, it is helpful to choose the centroids elsewhere. For simulation purpose we chose the cluster centroids to be $\theta_1=(-5,6)$ and $\theta_2=(5,-6)$ and $\sigma=10$. For a total of 1000 runs, we generate 500 points from each of the above components at each simulation. Then, the proportion of mislabeled points using the $\ell_1$ distance and $\ell_2$ distance-based clustering with the true centroids turns out to be 0.233 (with a standard deviation of 0.0004) and 0.218 (with a standard deviation of 0.0004), respectively. See \prettyref{fig:L1vL2} below for an illustration of one of the instances, along with the different Voronoi regions. Notably, the above example is provided for visualization purposes only, and as we will see in \prettyref{sec:experiments}, the differences in performance are more significant in various general simulation studies and real data analyses. Nonetheless, in the next result, we demonstrate that asymptotically, our algorithm outperforms the mislabeling behavior for the $\ell_1$ metric with a general number of clusters and dimensions.
	\begin{figure}[ht]
		\begin{minipage}{0.5\linewidth}
			\centering
			\includegraphics[height=8cm]{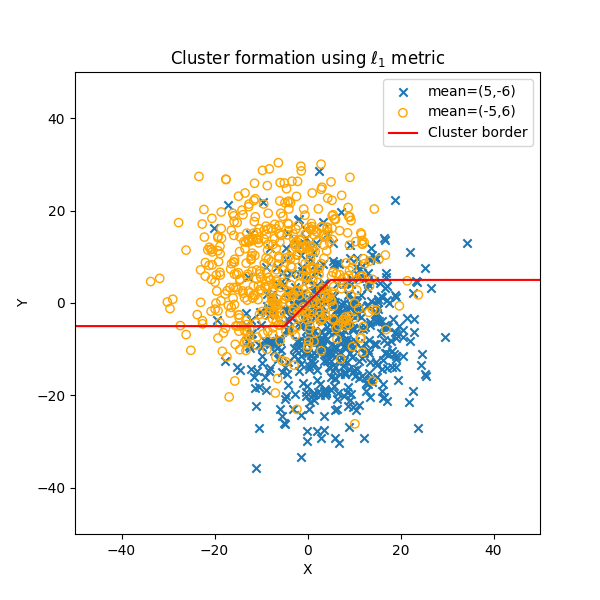}
		\end{minipage}
		\begin{minipage}{0.5\linewidth}
			\centering
			\includegraphics[height=8cm]{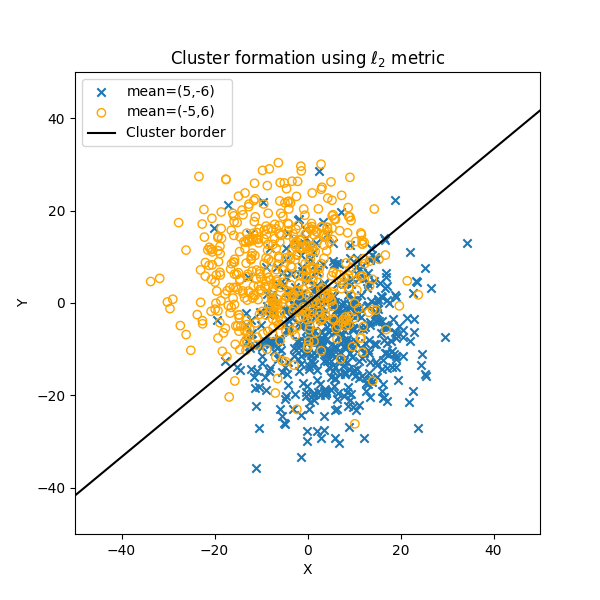}
		\end{minipage}
		\caption{Comparison of clustering via $\ell_1$ and $\ell_2$ metrics after initializing at true centroids}
		\label{fig:L1vL2}
	\end{figure}
	
	\begin{theorem}\label{thm:ell1-subopt}
		Suppose that the minimum separation between the centroids is $\Delta$. For any constant $C_0>0$, the following is satisfied given any data-dependent centroid estimates that are within $C_0\sigma$ Euclidean distance from the true centroids.
		\begin{enumerate}[label=(\roman*)]
			\item  There exists $c>0$ depending on $C_0$ and the number of clusters in the dataset such that whenever ${\Delta> c\sigma\sqrt d}$,  we have that the mislabeling error produced by the $\ell_1$ distance is at least $e^{-{\Delta^2\over 8\sigma^2}\pth{1+o(1)}}$. Here, the $o(1)$ term approaches 0 as $\Delta/\sigma$ increases to infinity.
			\item Given any $C\in (0,2)$, there exists a constant $\tilde c>0$, depending on $C_0,C$, and the number of clusters in the data set, such that whenever $\Delta>\tilde c\sigma\max\sth{\sqrt d,\log(1/\alpha)}$ the maximum mislabeling error produced by the $\ell_1$ distance is at least $e^{-{\Delta^2\over (8+C)\sigma^2}}$.
		\end{enumerate}
	\end{theorem}
	
	The core idea of the proof is the following. Using a similar technique as in the proof of \cite[Theorem 3.3]{lu2016statistical} and \cite[Theorem 2]{gao2018community}, it is not difficult to show that the mislabeling error can be bounded from below, up to a factor involving $k$, using the mislabeling error for the case when there are only two clusters present. In the two cluster cases, we choose the centers to lie in a neighborhood around the identity line (i.e., the line where all coordinates are equal), excluding the identity line itself. Then, considering that the centroid estimates are close to the actual centroids, a detailed computation based on the Gaussian probabilities reveals the lower bound. For formal proof, see \prettyref{app:l1-mislabel}.

	\section{Methodology}
	\label{sec:algorithm}
	In view of the above argument regarding the Voronoi diagrams, it seems that using a non-$\ell_2$ type metric need not achieve optimal mislabeling even though it might give rise to good algorithms for estimating the centroids. As a result, to retain the optimal mislabeling, we propose using the $\ell_2 $ metric for labeling. In the current manuscript, we analyze the performance of a $\ell_2$-based clustering algorithm when the estimation step is performed via the coordinate-wise median. Before jumping into the algorithm, we introduce some basic notation. 
	
	\noindent {\textbf{Notation}:} Given a set of $n$ real numbers $v_1,\dots,v_n$ let $v^{(1)}\geq \dots\geq v^{(n)}$ denote their order statistics. For any vector $v$ of length $n$ on the real line let us define the median by $v^{(\ceil{n/2})}$, i.e., the $\ceil{\frac n2}^{\text{th}}$ largest element. Given a set $V$ of vectors in $\reals^d$, let ${\med(V)}$ denote the coordinatewise median of the set of vectors. Let $\|\cdot \|_2$ denote the Euclidean norm unless otherwise specified. The set of integers $\sth{1,2\dots,k}$ is denoted by $[k]$. Let $Y_1,\dots,Y_n$ denote the observed data. For $s\geq 1$, let $\{\hat \theta_h^{(s)}:h\in [k]\}$ denote the estimated centroid at iteration $s$ and $\{T_h^{(s)}:h\in [k]\}$ denote the partition of $[n]$ that gives us the estimated clusters at iteration $s$. The case $s=0$ stands for initialization parameters. 
	
	We present our clustering methodology in \prettyref{algo:llone} below. For simulation purposes, we set the error threshold parameter $\epsilon$ to 0.001 and the maximum number of iterations $M$ to 100. If initial centroid estimates are unavailable beforehand, a random initializer can be used. Another well-known initializer, often used for mean-based clustering algorithms, is based on the $k$-means++ algorithm \cite{arthur2007k}. However, outliers can negatively impact the performance of $k$-means++. Recently \cite{deshpande2020robust} proposed an alternative robust initialization technique. Whether such initialization can directly provide a robust and optimal mislabeling error is beyond the scope of the current paper.
	\begin{algorithm}[t]
		\caption{The \lcomed Algorithm}\label{algo:llone}
		{\bf Input}:  Data $Y_1,\dots,Y_n$. Initial centroid estimates $(\hat \theta_1^{(0)},\dots,\hat \theta_k^{(0)})$ (or initial label estimates $(\hat z_1^{(0)},\dots,\hat z_n^{(0)})$). Error threshold $\epsilon$ and maximum iteration $\sfM$.\\
		{\bf Steps}: 
		\begin{algorithmic}[1]	
			\State Set $s=1$. 
			\If{$(\hat z_1^{(0)},\dots,\hat z_n^{(0)})$ are available} 
			compute $T_h^{(1)}=\{i\in [n]:\hat z_i^{(0)}=h\}$ for all $h\in [k]$ and directly go to the \textbf{Estimation step}.
			\EndIf
			\State \textbf{Labeling step}:
			\begin{equation*}
				\begin{aligned}
				T_h^{(s)}=
				\Biggl\{
				&i\in \sth{1,\dots,n}: \|Y_i-\hat\theta_h^{(s-1)}\|_2\leq \|Y_i-\hat\theta_a^{(s-1)}\|_2, 
				\\ &
				a\in\{1,\dots,k\}\Biggr\},
			\end{aligned}
			\end{equation*}
			with ties broken arbitrarily.
			\State \textbf{Estimation step:} 
			$\hat \theta_h^{(s)} = \med(\sth{Y_i:i\in T_h^{(s)}}),\quad h\in \sth{1,\dots,k},$
			\If{\textbf{either} $s=1$ \textbf{or} $\frac 1k\sum_{h=1}^k \|\hat\theta_h^{(s)}-\hat\theta_h^{(s-1)}\|_2^2 > \epsilon$ and $s<\sfM$}
			\State Update $s\leftarrow s+1$ and go back to the \textbf{Labeling step} and repeat the subsequent steps.
			\EndIf
		\end{algorithmic}
		{\bf Output}: $(\hat \theta_1^{(s)},\dots,\hat \theta_k^{(s)})$ and label estimate for $Y_i$ given by $\hat z_i^{(s+1)} = \argmin_{h\in\{1,\dots,k\}}\|Y_i-\hat\theta_h^{(s)}\|_2,$ with ties broken arbitrarily.
	\end{algorithm}

	\section{Main results}
	\label{sec:main-results}
	
	\subsection{Sub-Gaussian mixture model}
	\label{sec:model}
	In this section we explore the theoretical guarantees of the \lcomed algorithm. We study the algorithm in the additive sub-Gaussian error model. In our model, the observed data $Y_1,\dots,Y_n\in \reals^d$ are distributed as
	\begin{align}\label{eq:model}
		Y_i=\theta_{z_i}+w_i,\ i=1,\dots,n,
	\end{align}
	where $\sth{z_i}_{i=1}^n\in[k]^n$ denote the underlying unknown component labels of the points, and the $\{w_i\}_{i=1}^n$ denote the error variables. We assume that $\{w_i\}_{i=1}^n$ are independent zero mean sub-Gaussian vectors with parameter $\sigma>0$, i.e.,
	\begin{align}\label{eq:subg-defn}
		\EE\qth{e^{\langle a,w_i \rangle}}
		\leq e^{\sigma^2\|a\|_2^2\over 2}, \text{ for all $i\in \sth{1,\dots,n}$ and $a\in \reals^d$}.
	\end{align}
	In addition, we assume that after observing the data, an adversary can add $n^{\out}$ many data points of choice. Suppose that after $s$-steps the estimated centers are $\hat\theta^{(s)}_1,\dots,\hat\theta^{(s)}_k$ and the estimated labels are $\hat z^{(s)}=\sth{\hat z^{(s)}_1,\dots,\hat z^{(s)}_n}$. The initialization step corresponds to $s=0$. The above estimates are computed successively as
	\begin{align}
		\begin{gathered}
			\hat \theta^{(s)}_h
			=\med\sth{Y_i:\hat z^{(s)}_i=h},\ h\in\sth{1,2,\dots,k},\\
			\hat z^{(s+1)}_i
			= \argmin_{h\in[k]} \|Y_i-\hat\theta^{(s)}_h\|^2_2.
		\end{gathered}
	\end{align}
	We provide guarantees for mislabeling in \prettyref{eq:kmed_1} and the centroid estimation errors $\sth{\|\hat \theta_h^{(s)}-\theta_h\|_2^2:h=1,\dots,k}$.

	\subsection{Optimality guarantees for mislabeling proportion}
	\label{sec:res-wo-out}
	
	To better present our results, we first introduce some notation. For all $h,g\in [k]$, define 
	\begin{equation}\label{eq:cluster-def}
		\begin{gathered}
			T_h^*=\sth{i\in [n]:z_i=h},
			\hat T_h^{(s)}=\sth{i\in [n]:z_i^{(s)}=h}\\
			n^*_h = \abs{T^*_h}, n^{(s)}_h = \abs{T^{(s)}_h}, n^{(s)}_{hg} = \abs{T^*_h \cap T^{(s)}_g}
		\end{gathered}
	\end{equation}
	Note that for $s\geq 1$ this implies
	\begin{align}
		T_h^{(s)}=\sth{i\in[n]: \|Y_i-\hat\theta_h^{(s-1)}\|_2\leq \|Y_i-\hat\theta_a^{(s-1)}\|_2,a\in[k]}.
	\end{align}
	with ties broken arbitrarily. Let us define the minimum fraction of points in the data set that come from a single component in the sub-Gaussian mixture as
	\begin{align*}
		\alpha=\min_{g\in[k]} {n_{g}^*\over n}.
	\end{align*}
	Define the cluster-wise correct labeling proportion at step $s$ as
	\begin{align*}
		H_s=\min_{g\in [k]}\sth{\min\sth{{n_{gg}^{(s)}\over n_g^*},{n_{gg}^{(s)}\over n_{g}^{(s)}}}}.
	\end{align*}
	We denote by $\Delta = \min_{g\neq h\in [k]}\|\theta_g-\theta_h\|_2$ the minimum separation between the centroids. Let $\Lambda_s$ denote the error rate of estimating the centers at iteration $s$
	\begin{align*}
		\Lambda_s = \max_{h\in[k]}\frac 1{\Delta} \|\hat \theta_h^{(s)}-\theta_h\|_2.
	\end{align*}
	Our results are presented based on the signal to noise ratio in the model, defined as 
	$$
	\snr = {\Delta\over 2\sigma}.
	$$
	When $\snr$ is very small, there exist two clusters such that information theoretically we can not distinguish between the corresponding centroids with positive probability, even when the labels are known. As a consequence, we only study the mislabeling guarantees when the $\snr$ is large.
	
	We have the following theorem.
	\begin{theorem}
		\label{thm:main}
		Suppose that an adversary, after analyzing the data $Y_1,\dots,Y_n$ coming from the subgaussian mixture model \eqref{eq:model}, adds $n^{\out}=n\alpha(1-\delta)$ many outliers of its choice for some $\delta\in (0,1]$. Then there exist constants $C,c_0,c_1,c_2,c_3,c_4>0$ such that the following are satisfied. 
		If $n\alpha\geq c_0\log n$, 
		$$\sqrt \delta\cdot \snr\sqrt{\alpha/d}>C$$
		and the clustering initialization satisfies
		$${H_0}\geq \frac 12+\frac {c_1}{\pth{{\snr}\sqrt{\alpha\over d}}^2}\quad  \text{ or } \quad \Lambda_0\leq \frac 12-{c_2\over {\sqrt \delta\cdot {\snr}\sqrt{\alpha\over 1+dk/n}}},$$
		then our algorithm guarantees for all $s\geq 2$ 
		$$\ell(\hat z^{(s)},z)\leq \exp\sth{-\frac 12\pth{1-{c_4\over (\sqrt \delta \cdot \snr\sqrt{\alpha/d})^{1/2}}}{(\snr)^2}},
		$$
		with probability $1-4(k^2+k)n^{-4}-16dn^4e^{-0.3n}- 2k^2e^{-{\snr\over 4	}}.$
		
		
	\end{theorem}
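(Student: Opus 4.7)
The plan is to prove the theorem via a one-step improvement argument that is iterated from the initialization, showing that the good-center regime (small $\Lambda_s$) and the good-label regime (large $H_s$) reinforce each other. First I would set up a collection of high-probability events capturing regularities of the clean sub-Gaussian data: (i) for each true cluster $h$ and each coordinate $j\in[d]$, the coordinate-wise empirical median of the noises $\{w_{i,j}: z_i=h\}$ concentrates to zero at rate $O(\sigma\sqrt{\log n/n_h^*})$ uniformly over all sufficiently large subsets of indices (responsible for the $16dn^4 e^{-0.3n}$ term via a union bound over subsets and coordinates); (ii) sub-Gaussian tail bounds for the inner products $\langle w_i,\theta_h-\theta_g\rangle$ uniformly in $(g,h)$ (the $(k^2+k)n^{-4}$ term); (iii) a Bernstein-type event controlling the sum of misclassification indicators (the $2k^2 e^{-\snr/4}$ term). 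On this good event the remaining argument is purely deterministic.

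Next I would unify the two initialization hypotheses using a single iteration. If $\Lambda_0$ is small then expanding $\|Y_i-\hat\theta_h^{(0)}\|^2 - \|Y_i-\hat\theta_{z_i}^{(0)}\|^2$ and invoking event (ii) shows the first label assignment is mostly correct, i.e.\ $H_1\geq 1/2+\Omega(1/(\snr\sqrt{\alpha/d})^2)$, matching the $H_0$ hypothesis up to constants; so without loss of generality we enter the good-$H_s$ regime within one step. The heart of the proof is then two complementary one-step improvement claims. \emph{Claim A}: if $H_s\geq 1/2 + c_1/(\snr\sqrt{\alpha/d})^2$, then $\Lambda_{s+1}$ is small at the rate inside the $c_2$ correction of the theorem. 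This uses the breakdown-point property of the coordinate-wise median: inside the estimated cluster $\hat T_h^{(s+1)}$, the count of correctly-labeled clean points ($\geq H_s n_h^*$) strictly exceeds the combined count of outliers ($\leq n\alpha(1-\delta)$) plus misclassified clean points, by a majority margin determined by $H_s-1/2$ and $\delta$. This majority, surviving simultaneously in each of the $d$ coordinates by event (i), pins the coordinate-wise median near the true center in each coordinate; summing over $d$ yields the $\ell_2$ bound. \emph{Claim B}: if $\Lambda_s$ is small then for each clean $Y_i$, misclassification at step $s+1$ requires some $h\neq z_i$ with $2\langle w_i,\theta_h-\theta_{z_i}\rangle \geq \|\theta_h-\theta_{z_i}\|^2(1-O(\Lambda_s))$, which by sub-Gaussianity has probability at most $\exp\{-\tfrac12(1-O(\Lambda_s))\snr^2\}$. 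Averaging across $i$ via event (iii) produces the stated mislabeling bound.

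Iterating Claims A and B, the conclusion follows: at $s=1$ the centers are accurate, and by $s=2$ the mislabeling rate attains the optimal exponent, with the $c_4$ correction absorbing the $\Lambda_1$ dependence. The main obstacle is Claim A — the coordinate-wise median analysis under an adversary who can correlate outlier placements across coordinates and iterations. Ensuring the strict-majority condition holds simultaneously in all $d$ coordinates without degrading the leading $(1-o(1))\snr^2/2$ exponent in the final bound requires carefully tracking how the margin $H_s-1/2$ depends on $\snr\sqrt{\alpha/d}$ and $\delta$, and propagating this margin through the iteration without loss. A secondary technical difficulty is that the algorithm never knows which points are outliers, so the median-robustness estimate must be invoked for the worst-case outlier assignment at every step; this is handled cleanly by the uniform-over-subsets formulation of event (i).
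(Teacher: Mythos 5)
Your overall architecture is the same as the paper's: two complementary one-step claims (good centers $\Rightarrow$ good labels via sub-Gaussian tails; good labels $\Rightarrow$ good centers via the median's breakdown point), a uniform-over-subsets concentration of order statistics to handle the adversary's worst-case outlier assignment, and an iteration that stabilizes by $s=2$. Your Claims A and B correspond directly to the two parts of the paper's key lemma. However, three specific steps as you state them would not go through.

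First, your event (i) claims the coordinate-wise median of the noise concentrates at rate $O(\sigma\sqrt{\log n/n_h^*})$ \emph{uniformly over all sufficiently large subsets}. That rate is impossible under a uniform-over-subsets quantifier: an adversarially chosen subset consisting of the largest half of the noise values has median equal to roughly the $3/4$-quantile of the full sample, which is $\Theta(\sigma)$. The paper's corresponding lemma only asserts a constant-order bound of the form $\sigma\sqrt{(4/p_0)(1/\alpha+1)}$ on the relevant order statistics, and the proof survives because $\Lambda_s$ then scales like $\sqrt{d/(\gamma_0\alpha)}\,\sigma/\Delta$, which is small precisely because $\snr\sqrt{\alpha/d}$ is assumed large. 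Your conclusion is salvageable, but the stated rate must be weakened to a quantile-level constant. Second, your Claim B reduces misclassification to $2\langle w_i,\theta_h-\theta_{z_i}\rangle \geq \|\theta_h-\theta_{z_i}\|^2(1-O(\Lambda_s))$, but the actual condition involves $\langle w_i,\hat\theta_h^{(s)}-\hat\theta_g^{(s)}\rangle$, and the cross term $\langle w_i,\Delta_h-\Delta_g\rangle$ with $\Delta_h=\hat\theta_h^{(s)}-\theta_h$ cannot be absorbed into the $(1-O(\Lambda_s))$ factor because $\Delta_h,\Delta_g$ are data-dependent and correlated with $w_i$; the paper splits this off and controls it by Cauchy--Schwarz together with a tail bound on $\|w_i\|_2$, which is where one of the $\exp\{-(\snr)^2/2\}$ contributions comes from. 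Third, you attribute the $2k^2\exp\{-\snr/4\}$ term to a Bernstein-type concentration of the sum of misclassification indicators, but these indicators are mutually dependent through the shared estimated centers, so a direct Bernstein bound is not available. The paper instead bounds the \emph{expected} mislabeling conditional on the good event and converts to a high-probability statement by Markov's inequality; that conversion is exactly what produces both the $\exp\{-\snr/4\}$ term and the $n^4$ inflation of the other failure probabilities. Without this (or an equivalent decoupling device), your proof as outlined does not deliver the stated probability bound.
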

	
	{ \begin{remark}[Necessity of the assumptions on $\snr$]
			The sufficient condition involving $\snr$ exhibits an optimal dependency on the data dimension $d$ for our coordinatewise median-based clustering algorithm to work. To see that, consider the Gaussian mixture model with unit variance, i.e., $\sigma=1$. Then \cite[Proposition 2.1]{chen2018robust} explains that given $X_1,\dots,X_n\simiid \delta P_{\theta}+(1-\delta) Q$, where $P_{\theta} N(\theta,I_d)$ and $Q$ is some contaminating distribution, the coordinatewise median $\hat \theta$ of $\{X_1,\dots,X_n\}$ satisfies
			$$
			\sup_{Q,\theta}\PP[\|\hat\theta-\theta\|_2\geq C\sqrt d]\geq c
			$$
			for constants $C,c>0$. This implies that with a constant probability, we can construct a scenario where the centroid estimation error for the coordinate-wise median will be greater than $C\sqrt {d} $. As a result, unless $\snr=\frac 12\min_{g\neq h}\|\theta_h-\theta_g\|_2$ is significantly larger than $C\sqrt d$, we can construct a Gaussian mixture model and outlier distribution such that, the \lcomed algorithm will produce inconsistent clustering with a constant probability. We may be able to achieve similar mislabeling guarantees for robust clustering with relaxed conditions on the data dimension by using alternative approaches that incorporate an additional dimension reduction step. For example, the separation condition might be improved to $\Delta\geq C\sigma \sqrt{\min\{d,k\}}$ by first obtaining a $k$ dimensional robust spectral projection of the data as described in \cite[Proposition 3]{srivastava2023robust} before subsequently running the \lcomed algorithm. However, the exact technical details are beyond the scope of current work. We leave it for future work to determine the optimal dependency of the centroid separation conditions on $\delta$.
		\end{remark}
		
		\begin{remark}[Robust initialization that meets our theoretical requirements]
			A theoretically sound option for robust initialization has been proposed in the recent work of \cite{jana2024provable}, known as Initialization via Ordered Distances ({\sf IOD}). In view of the result (Theorem 12) in their paper, we have that whenever the number of outliers is bounded as $n^\out\leq {n\alpha^2\over 32}$, the {\sf IOD} estimator guarantees estimates $\hat\theta_1,\dots,\hat\theta_k$ of the true centroids $\theta_1,\dots,\theta_k$ such that for a permutation $\pi$ of $\{1,\dots, k\}$ we have
			\begin{align}\label{eq:rev_1}
				\max_{i\in [k]}\|\theta_i-\hat \theta_i\|_2\leq {\Delta\over 3},
			\end{align}
			with a high probability, where $\Delta$ denotes the minimum separation of the centroids. We propose to use the centroid estimates $\{\hat\theta_1,\dots,\hat\theta_k\}$ as our initialization $\{\hat\theta_1^{(0)},\dots,\hat\theta_k^{(0)}\}$. Note that given any initialization $\{\hat\theta_1^{(0)},\dots,\hat\theta_k^{(0)}\}$, our theory (Theorem 2) requires that $$
			\Lambda_0={\max_{i\in [k]}\|\theta_i-\hat \theta_i^{(0)}\|_2\over\Delta}
			\leq \frac 1{2+c}
			$$
			for any small constant $c>0$, which is satisfied by the {\sf IOD} estimator, in view of \eqref{eq:rev_1}. This implies that whenever $\snr$ is large, our \lcomed algorithm initialized with the {\sf IOD} algorithm achieves the desired mislabeling guarantees. Unfortunately, the {\sf IOD} algorithm is extremely slow and needs around $k^{O(k)}n^2(d+\log n)$ runtime to output initial centroid estimates. We leave it for future work to explore the possibility of developing a fast and robust initialization algorithm that can guarantee our initialization conditions and yield a complete and efficient algorithm when combined with our iterative clustering strategy.
	\end{remark}}
	
	\begin{remark}[Comments on the initialization conditions]
		The result \cite[Theorem 3.3]{lu2016statistical} shows that it is not possible to improve the mislabeling error beyond the rate $e^{-(1+o(1)){\Delta^2\over 8\sigma^2}}$. { Our algorithm achieves the best mislabeling proportion for all large signal-to-noise ratios such that $(\sqrt \delta \snr \sqrt{\alpha/d})^{-1} = o(1)$. In that setup, using the definition $\snr={\Delta/2\sigma}$, we can rewrite the mislabeling guarantees in Theorem 2 as $\exp\sth{-(1-o(1)){\Delta^2\over 8\sigma^2}}$, which matches the guarantee in \cite[Theorem 3.3]{lu2016statistical}.} This establishes the optimality of the \lcomed algorithm. Notably, in the absence of outliers and fixed dimensions, one can argue that the initialization condition required to achieve the above mislabeling error rate is significantly weaker than the one used in \cite{lu2016statistical}. 
		
		\begin{itemize}
			\item Let $G_s=1-H_s$ denote the cluster-wise mislabeling rate 
			$$
			G_s = \max_{h\in [k]}\max\sth{{\sum_{g\neq h\in [k]}n_{gh}^{(s)}\over n_h^{(s)}},{\sum_{g\neq h\in [k]}n_{hg}^{(s)}\over n_h^*}}.
			$$
			Then the mislabeling guarantees in \cite[Theorem 3.2]{lu2016statistical} require 
			$$G_0\leq \frac 1\lambda\pth{\frac 12 - {c'\pth{\sqrt \delta\cdot {\snr}\sqrt{\alpha\over 1+dk/n}}^{-\frac 12}}},$$
			where $c'$ is some absolute constant and $\lambda = \frac 1{\Delta}\max_{g\neq h\in [k]}\|\theta_g-\theta_h\|_2$ denotes the ratio of maximum and minimum separation between the centroids. On the other hand, the requirement of \prettyref{thm:main} in terms of $G_0$ is given by $G_0\leq \frac 12-\frac {c_1}{\pth{{\snr}\sqrt{\alpha\over d}}^2}$. This is precisely due to the robustness property of our centroid estimate. Even when more than half of the points in all the clusters are correctly labeled, the naive mean-based centroid estimate may not perform well if contamination originates from a distant cluster. However, due to its 50\% breakdown property, the coordinatewise median can maintain the stability of the process in similar scenarios. 
			
			\item For any constant $\delta>0$, the initialization condition based on $\Lambda_0$ in the above theorem is also an improvement on the existing result in \cite[Theorem 3.2]{lu2016statistical}, which required $\Lambda_0\leq \frac 12 - {c_2}{\pth{{\snr} \sqrt{\alpha\over 1+{dk/n}}}^{-1/2}}$. The improvement is achieved by using some sharper identities compared to those used in the above reference, and similar initialization conditions can be derived for the results involving the Lloyd algorithm as well.
		\end{itemize}
	\end{remark}
	
	\begin{remark}[Convergence analysis of the \lcomed algorithm]
		The proof of \prettyref{thm:main} shows that the convergence of the \lcomed algorithm has three stages. Let $\delta$ be any constant in $(0,1)$. Then, starting from a cluster-wise correct labeling proportion $H_0$ around a small neighborhood of 1/2, after the first iteration, the \lcomed algorithm achieves a correct labeling proportion $H_1\geq \frac {1}{2} + c$, for some large constant $c$. Once this low mislabeling proportion is achieved, the robustness property of the coordinatewise median kicks in to produce a centroid estimate that differs on each coordinate by, at most, a constant (depending on $c$ and the quantile function of the data distribution on that coordinate). For any fixed $d$, this centroid estimation error is sufficient for the $\ell_2$ based labeling to produce the optimal statistical rate we aim for. Once this low mislabeling rate is achieved, the estimation and the labeling errors in all subsequent stages also remain good. In the analysis of the $k$-means algorithm, achieving a good centroid estimate usually requires more iterations, as the mean estimator does not necessarily guarantee low centroid estimation errors from low mislabeling errors.
		
	\end{remark}
	
	{ \begin{remark}[Runtime of our algorithm]\label{rmk:runtime}
			The \lcomed method reaches the theoretical mislabeling limit stated in \prettyref{thm:main} within $O(dn(k+\log n))$ time, which is almost linear in the sample size. To illustrate this, first note that we require a constant number of iterations to achieve the mislabeling guarantee stated in \prettyref{thm:main}. At each iteration, it takes $O(dnk)$ steps to assign all data points to their nearest cluster centroids. At each iteration $s$, for each cluster $g\in [k]$, it takes $O(d n_g^{(s)}\log (n_g^{(s)}))$ time to compute the coordinatewise median, where $n_g^{(s)}$ is the number of points in the estimated $g$-th cluster. Hence, the total time to compute all the cluster centroids is $O(\sum_{g\in [k]}d n_g^{(s)}\log (n_g^{(s)}))$, which is at most $O(dn\log n)$.
	\end{remark}}

	%
	
	\subsection{Statistical guarantees for centroid estimation}
	
	As mentioned in the previous section, once the \lcomed algorithm starts producing a low mislabeling error, the centroid estimates made by the coordinatewise median estimator differ from the true centroid only by a constant amount. In each coordinate, the worst instances of the deviations can be quantified using the corresponding data distribution function in each cluster component. To this end, let $F_{ij}$ denote the distribution of the $j$-th coordinate of the $i$-th error random variable $w_i$. Then, we have the following guarantees for estimating $\{\theta_g\}_{g=1}^k$ in the presence of $n^\out$ many extra adversarial points.
	\begin{theorem}\label{thm:centroid}
		Suppose that $n\alpha\geq 10\log n$ and the constraints in \prettyref{thm:main} on $H_0,\Lambda_0$ are satisfied. Then given any $\tau >0$, there exists a constant $C_\tau>0$ such that whenever $\sqrt \delta \cdot\snr\sqrt{\alpha/d}\geq C_\tau$ we get with probability at least $1-8n^{-1}-16dn^4e^{-0.3n}- 2k^2\exp\sth{-{\snr\over 4}}$, for $s\geq 2$
		\begin{equation*}
			\begin{aligned}
			\|\hat \theta_h^{(s)}-\theta_h\|_2^2\leq {d}
		&\Biggl[\max_{i\in T_h^*}\max_{j\in [d]} \max\Biggl\{\abs{F_{ij}^{-1}\pth{\frac 12 +p_\tau}},
		\\
		&\abs{F_{ij}^{-1}\pth{\frac 12 -p_\tau}}\Biggr\}\Biggr]^2,\quad s\geq 2,
		\end{aligned}
		\end{equation*}
		where 
		$p_\tau= \sqrt{3\log n\over n_h^*}+e^{-{1\over 2+\tau/4}(\snr)^2}+{n^\out\over n^\out+n_h^*}$.
	\end{theorem}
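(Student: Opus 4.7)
The plan is to reduce the theorem to a concentration statement for the coordinatewise median of a contaminated sample, in which the contamination fraction is driven by the mislabeling rate from \prettyref{thm:main}. The first step is to invoke \prettyref{thm:main} under the stated initialization hypothesis, which on a high-probability event yields, for every $s\geq 2$, a mislabeling rate $\ell(\hat z^{(s)},z) \leq \exp\{-\frac{1}{2+\tau/4}\snr^2\}$ once $\snr$ is large enough (this is how the $\tau$-dependence enters $C_\tau$). Because $\hat\theta^{(s)}_h$ is a coordinatewise median, it suffices to bound each coordinate's deviation and then sum the $d$ squared errors to produce the overall factor of $d$ in the stated bound.

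Next, fix $h\in[k]$ and decompose $\hat T_h^{(s)} = A_h \sqcup B_h \sqcup O_h$ into correctly labeled inliers $A_h = T_h^* \cap \hat T_h^{(s)}$, misassigned inliers $B_h$, and outliers $O_h$ with $|O_h|\leq n^\out$. The mislabeling bound controls $|B_h|$, so the contamination fraction $\epsilon := (|B_h|+|O_h|)/|\hat T_h^{(s)}|$ is bounded by $e^{-\frac{1}{2+\tau/4}\snr^2} + n^\out/(n^\out+n_h^*)$ up to a correction that vanishes as $|A_h|/n_h^* \to 1$. By the standard $50\%$-breakdown property of the median, for every coordinate $j$ the estimate $\hat\theta^{(s)}_{h,j}$ lies between the empirical $(1/2\pm\epsilon)$-quantiles of the clean subsample $\{Y_{i,j}-\theta_{h,j} : i \in A_h\}$. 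To convert these empirical quantiles into the population quantiles $F_{ij}^{-1}$ appearing in the statement, I would apply a Dvoretzky--Kiefer--Wolfowitz-type inequality for independent non-identical samples to the $n_h^*$ variables $\{w_{i,j} : i\in T_h^*\}$. This yields, with probability at least $1-2n^{-2}$,
\begin{equation*}
\sup_{t\in\reals} \bigl| \hat F_h(t) - \bar F_h(t) \bigr| \leq \sqrt{\frac{3\log n}{n_h^*}},
\end{equation*}
where $\hat F_h$ denotes the empirical CDF over $T_h^*$ and $\bar F_h$ the average of the $F_{ij}$. Combining with the quantile reduction, the empirical $(1/2\pm\epsilon)$-quantile is dominated in absolute value by $\max_{i\in T_h^*}\max\{|F_{ij}^{-1}(1/2+p_\tau)|,|F_{ij}^{-1}(1/2-p_\tau)|\}$ with $p_\tau$ as in the statement. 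Squaring and summing over $j\in[d]$, then union-bounding the $d$ DKW events with the event from \prettyref{thm:main}, gives the advertised bound and probability ($d\cdot 2n^{-2}\leq 8n^{-1}$ under the ambient assumption $d\leq 4n$).

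The main obstacle is the DKW step: since the noise is only independent sub-Gaussian rather than i.i.d., the empirical-CDF inequality must be invoked for non-identical samples and the resulting bound on $\bar F_h$ must then be translated into a bound involving the per-sample quantile functions $F_{ij}^{-1}$, which is what forces the $\max_{i\in T_h^*}$ in the final statement. Balancing the quantile slack introduced here against the exponent in the mislabeling rate from \prettyref{thm:main} is what produces the constant $1/(2+\tau/4)$ in $p_\tau$ and hence the $\tau$-dependence of $C_\tau$.
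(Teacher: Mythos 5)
Your proposal follows essentially the same route as the paper: invoke \prettyref{thm:main} to bound the contamination of each estimated cluster by mislabeled inliers plus the $n^\out$ outliers, sandwich the coordinatewise median of the contaminated set between order statistics of the clean cluster at quantile levels shifted by that contamination fraction (the paper formalizes your ``$50\%$-breakdown'' step via \prettyref{lmm:ord-stat_after_addn}), and then pass from empirical order statistics to the population quantiles $F_{ij}^{-1}$ via a Chernoff/DKW-type bound for independent non-identical variables (\prettyref{lmm:order-concentration-smallx}), which is exactly what forces the $\max_{i\in T_h^*}$ and the $\sqrt{3\log n/n_h^*}$ term. The only point to tighten is the ``correction'' you wave at: the global mislabeling rate $A_s$ converts to a per-cluster contamination fraction of order $A_s/\alpha$, and the paper absorbs this $1/\alpha$ factor by choosing $C_\tau$ large enough that $\tfrac{6}{\alpha}A_s\le e^{-\frac{1}{2+\tau/4}(\snr)^2}$.
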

	For the sake of explaining the above result, suppose that all the coordinates of the error random variables have median zero (i.e., $\sth{\theta_{i}}_{i=1}^k$ are coordinatewise median of the clusters at the population level). Then whenever ${n^\out\over n_h^*}\to 0$ and $\snr\to \infty$, the final centroid estimate for the $h$-th cluster is consistent. In particular, if in each coordinate, the errors of the $h$-th cluster are distributed as the univariate Gaussian distribution with variance $\sigma^2$, then whenever
	$$
	{n^\out}\leq {\log n\over C}, \quad 
	\snr\geq C\sqrt{\log n}
	$$
	for a large enough constant $C>0$, with high probability we can achieve $O\pth{{d\sigma^2\over n_h^*}\log n}$ error rate for estimating the centroid of the $h$-th cluster. This is within a logarithmic factor of the optimal error rate for estimating the location when $n_h^*$ many independent data points are available from the $\calN(\theta_h,\sigma^2 I_{d\times d})$ distribution.

	\section{Numerical experiments}
	\label{sec:experiments}
	
	\subsection{Choice of initialization and error metric}

	We evaluate the performance of our algorithm against its two closest competitors: the $ \ell_1$-metric-based \kmed algorithm, which performs both the cluster labeling step and estimation step using the $ \ell_1$ metric, and the $ \ell_2^2$-distance-based \kmeans algorithm, which uses the $\ell_2^2$ distance for both steps. In theory, all three algorithms are expected to provide good statistical guarantees when initialized near the ground truth. However, in practice, a good initialization condition is not always met, and one often uses random initialization to initiate the clustering process. Accordingly, we explore the performance benefits of these clustering algorithms using the following initialization. 
	\begin{itemize}[leftmargin=*]
		\setlength\itemsep{0.5em}
		\item \textbf{Random}: We randomly select $k$ data points in the given dataset as the initial centroids of each clustering algorithm.  
		\item \textbf{Omniscient}: To assess the performance of the clustering techniques if the initializations were close to the ground truth, we initialize all the methods using the exact centroids.
	\end{itemize}
	Suppose the final labels for the true (excluding outlier) data points are $\hat z_1,\dots, \hat z_n$
	For both of the initializations, we compare the mislabeling proportion (MP) defined as $\ell(\{\hat z_i\}_{i=1}^n,\{z_i\}_{i=1}^n)$ in \eqref{eq:kmed_1}.
	
	{ \begin{remark}
			Although random initialization appears to work well in many clustering examples, provable guarantees are rarely established. The spectral initialization, often used in related literature \cite{lu2016statistical}, essentially refers to a combination of dimension reduction strategy via spectral Projection and a subsequent clustering technique (e.g., an approximate $k$-means optimization \cite{kumar2004simple} used in \cite[Algorithm 2]{loffler2021optimality}) to guarantee a decent cluster labeling that need not be the optimal one. Then, a subsequent iterative clustering is carried away to optimize the clustering output further. In other words, spectral initialization will itself come with the subpart of finding an initial clustering from the low-dimensional projections. Nonetheless, standard spectral projections rarely guarantee robustness against adversarial outliers, making the corresponding clustering techniques vulnerable to adversarial perturbations. Nevertheless, we will discuss numerical experiments based on a robust spectral projection at the end of this section.
	\end{remark}}
	
	\subsection{Experiments with synthetic datasets}
	
	In this section, we evaluate our proposed algorithm (i.e., \lcomed) on synthetic datasets and compare its performance in terms of MP in \eqref{eq:kmed_1} with classical \kmeans (e.g., the Lloyd–Forgy algorithm \cite{lloyd1982least}) and the \kmed (e.g., the $\ell_1$-based greedy clustering algorithm \cite{bradley1996clustering}). We simulate points from $\calN(\theta,\sigma^2 I_{d\times d})$ distribution (dimension $d$ and standard deviation $\sigma$ to be specified later) and with 4 different $\theta$ values (i.e., we have 4 clusters). The centroids of the cluster components are generated uniformly from the surface of the sphere of radius five around the origin. For each cluster, we generate 100 data points. To analyze the robustness guarantees, we add outlier points generated using the $\calN(\theta^\out,(\sigma^\out)^2I_{d\times d})$ distribution to the existing true dataset. 
	
	\subsubsection{Experiment setup}
	Our experiments are divided into the following regimes.
	\begin{itemize}
		
		\item \textbf{Different outlier proportions:}  We fix the data dimension at 10 and $\sigma=2$. The outlier points are generated with $\sigma^\out = 10$ and $\theta^\out=0$. We vary the number of outliers in the set $
		\{0,20,40,60,80\}$ (i.e., the proportion of outliers with respect to a single cluster varies in the set $\{0,0.2,0.4,0.6,0.8\}$).
		
		\item \textbf{Different outlier variances:} We fix the data dimension $d$ at 10. To generate data, we use $\sigma=2$. We add 60 outlier points. For generating the outlier points, we use $\theta^\out =0$, and we vary $\sigma^\out$ from 1 to 20.
		
		\item \textbf{Different dimensions:} The true points are generated with $\sigma=2$. We add 60 outlier points. The outlier points are generated from the multivariate Gaussian distribution with the $\sigma^\out = 10$ and $\theta^\out=0$. We vary the data dimension $d$ from 2 to 20.	
		
		\item \textbf{Different outlier locations:} The true points are generated with $\sigma=1$. We add 40 outlier points. We fix $d$ at 10. The outlier points are generated with $\sigma^\out=2$, and $\theta^\out$ is located in a randomly chosen direction with the norm varying within [0,100].
		
	\end{itemize}
	We repeat all the experiment setups 5,000 times to estimate the mislabeling proportion and its 95
	
	\begin{remark}
		Note that even though numerous methods in the literature aim to perform robust clustering (for instance, see the techniques compared with in \cite{srivastava2023robust}), in essence, almost all of them first use a spectral type dimension reduction technique and then apply a final iterative clustering step, either $k$-means, $k$-median or something similar. As a result, it might not be very sensible to compare our algorithm to those sophisticated methods as we can modify our method using all the dimension reduction techniques present therein. Hence, it is sufficient to focus on obtaining an iterative clustering technique with outstanding performance that can be used in combination with any desired dimension reduction techniques. Specifying the choice of robust dimension reduction technique is beyond the scope of current work.
	\end{remark}
	
	\subsubsection{Results}

	Next, we present the numerical study describing the effect of outlier proportions in \prettyref{fig:outlier_prop}. When the proportion of outliers is close to zero, the \kmeans algorithm performs quite well. In the absence of outliers, the mean of Gaussian samples is a consistent estimator of the location parameter and has an asymptotically lower variance than the median. With better location estimators and the excellent clustering properties of the $\ell_2$-based labeling step, \kmeans is expected to perform better than the other algorithms. As the number of outliers increases, the location estimates produced by the naive mean estimate tend to perform worse, increasing the mislabeling proportion for the \kmeans estimates. On the other hand, the median-based algorithms are less affected due to the robust location estimates. The comparatively worse performance of the \kmed algorithm can be attributed to the use of the $\ell_1$ metric in the labeling step.
	
	\begin{figure}[t]
		\centering
		\begin{center}
			\begin{minipage}{0.5\textwidth}
				\begin{center}
					{\small Random initialization}\\
					\includegraphics[height=5.5cm]{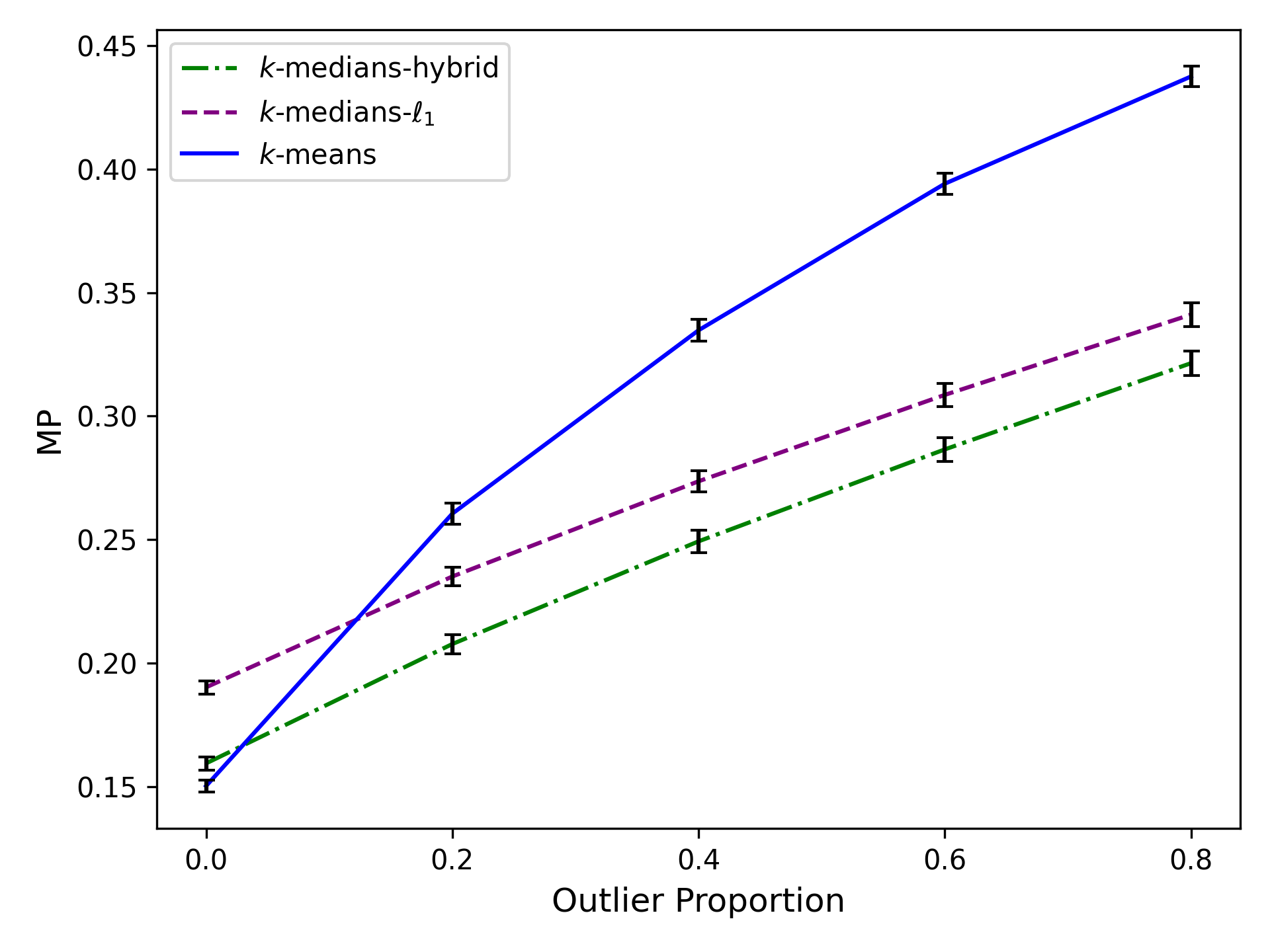}
				\end{center}
			\end{minipage}\hfill
			\begin{minipage}{0.5\textwidth}
				\begin{center}
					{\small Omniscient initialization}\\
					\includegraphics[height=5.5cm]{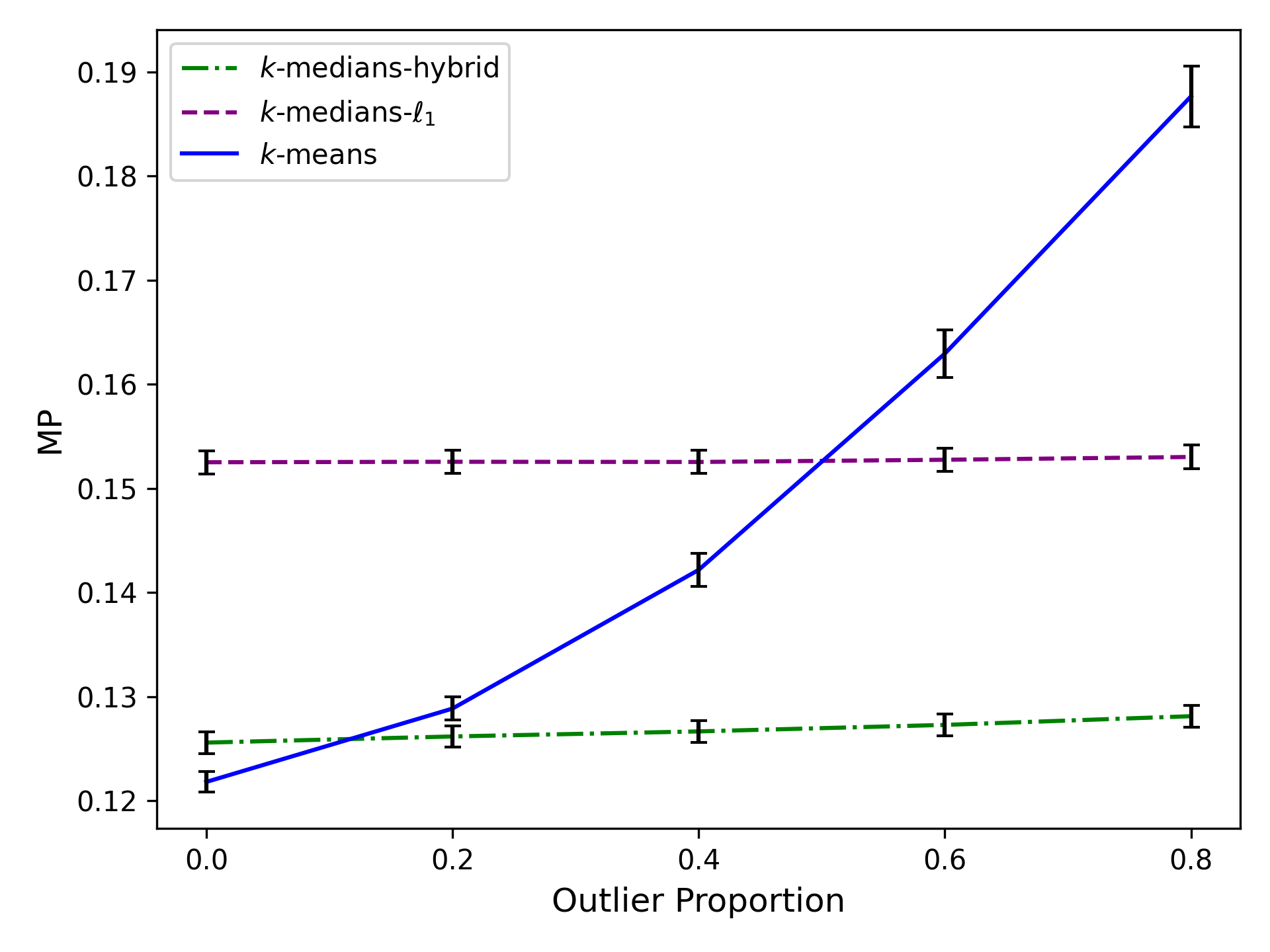}
				\end{center}
			\end{minipage}
			\caption{The effect of outlier proportions on all three algorithms.}
			\label{fig:outlier_prop}
		\end{center}
	\end{figure}
	
	\begin{figure}[t]
		\centering
		\begin{minipage}{0.5\textwidth}
			\begin{center}
				{\small Random initialization}\\
				\includegraphics[height=5.5cm]{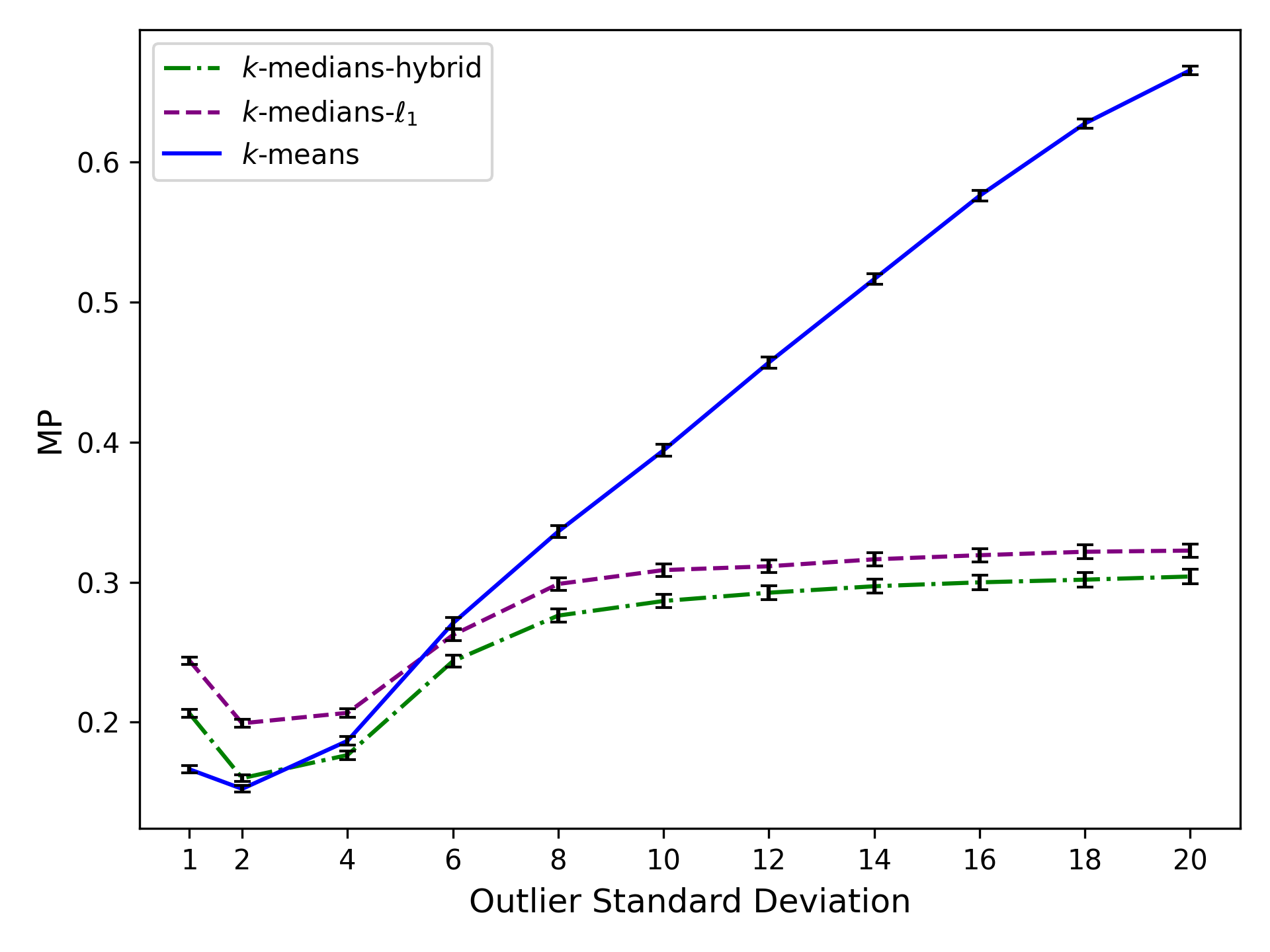}
			\end{center}
		\end{minipage}\hfill
		\begin{minipage}{0.5\textwidth}
			\begin{center}
				{\small Omniscient initialization}\\
				\includegraphics[height=5.5cm]{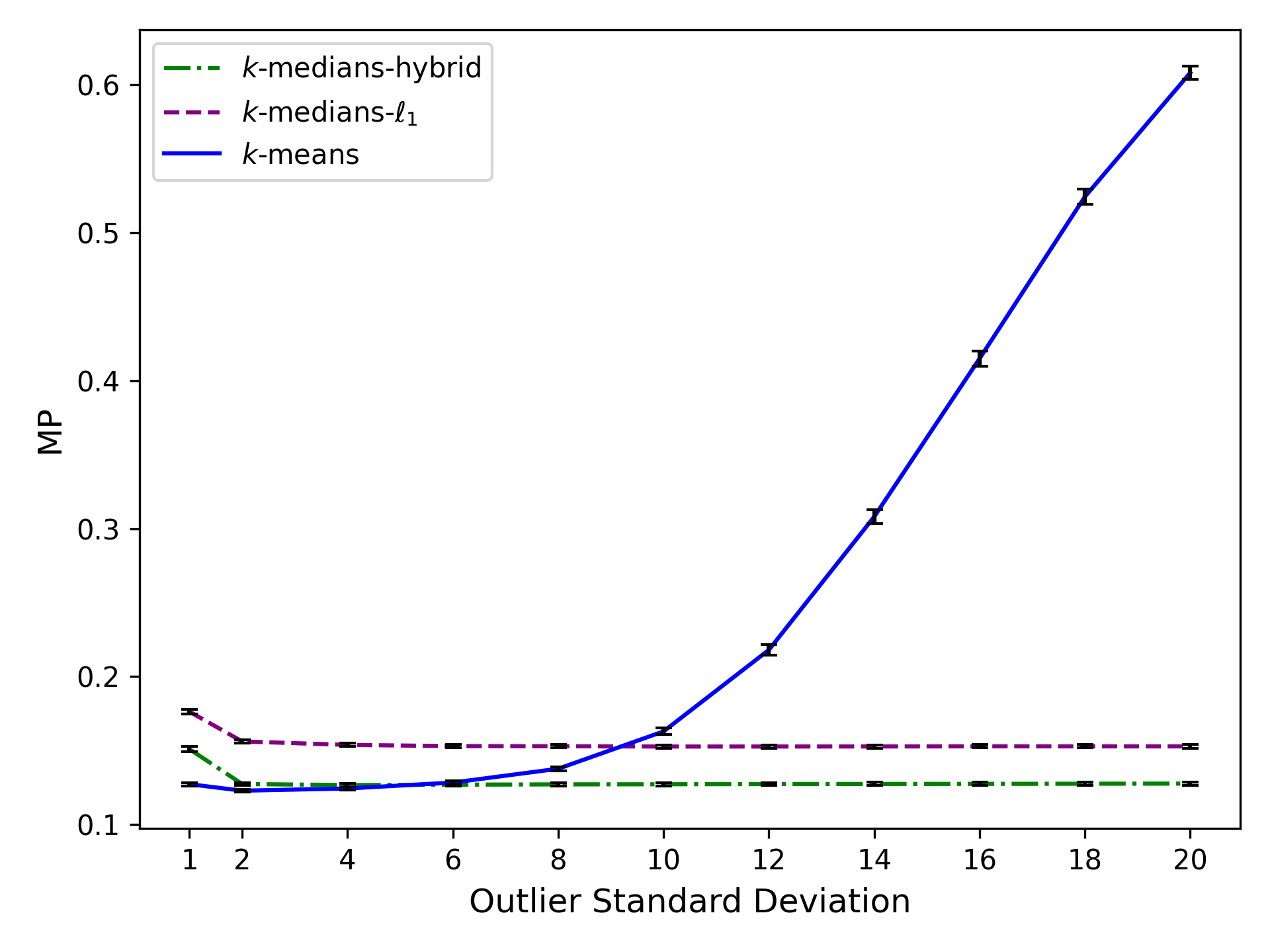}
			\end{center}
		\end{minipage}
		\caption{The effect of outlier variance on all three clustering algorithms.}
		\label{fig:variance}
	\end{figure}

	The effect of variances in the outlier distribution has been demonstrated in \prettyref{fig:variance}. Notably, even in the presence of outliers, when the outlier standard deviations are low, the \kmeans algorithm performs better than the other algorithms. We postulate that the outliers do not distort the location estimates in these setups enough to affect the final mislabeling proportions. As the outlier standard deviations increase, a significant portion of the outliers tends to be located away from the true data points, which disturbs the centroid estimation process and produces high mislabeling. Similar to the previous setups, the other clustering algorithms are affected less.
	
	The effect of data dimensions in the outlier distribution has been demonstrated in \prettyref{fig:dimension}. The increase in dimension of the system increases the surface of the sphere on which the centroids of the clusters lie. As the centroids are chosen uniformly on this surface, the increase in dimension also increases the average separation between the centroids. This is expected to induce a decline in the mislabeling proportion when the initializations are close to the actual centroids. However, as the dimensions increase, the outliers also tend to move away from the origin, and as a result, the mean estimates will perform poorly. In contrast, the median is expected to have better performances. Consequently, our algorithm produces a lesser mislabeling proportion among the two $\ell_2$ based clustering algorithms. 
	
	\begin{figure}[t]
		\centering
		\begin{minipage}{0.5\textwidth}
			\begin{center}
				{\small Random initialization}\\
				\includegraphics[height=5.5cm]{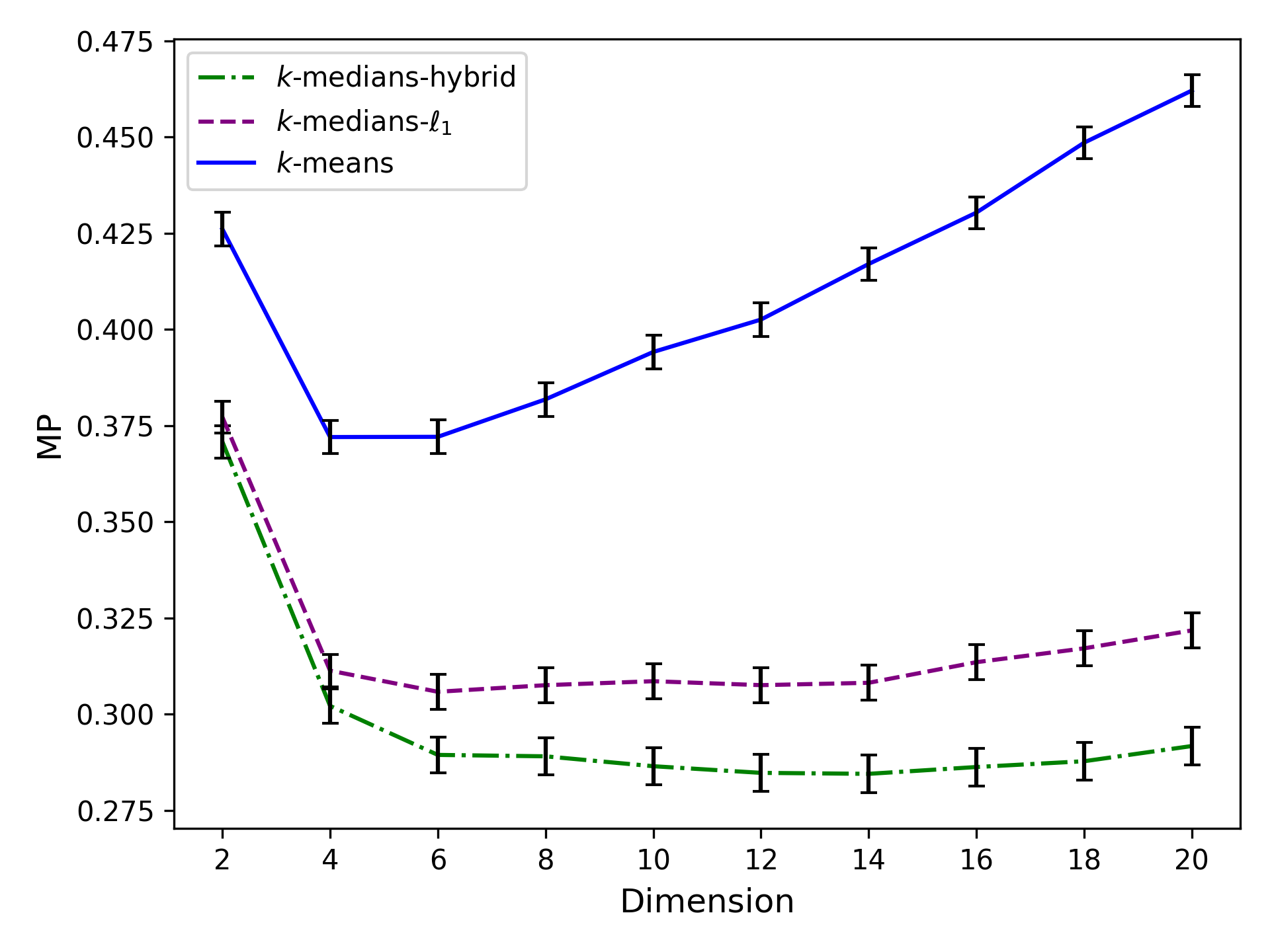}
			\end{center}
		\end{minipage}\hfill
		\begin{minipage}{0.5\textwidth}
			\begin{center}
				{\small Omniscient initialization}\\
				\includegraphics[height=5.5cm]{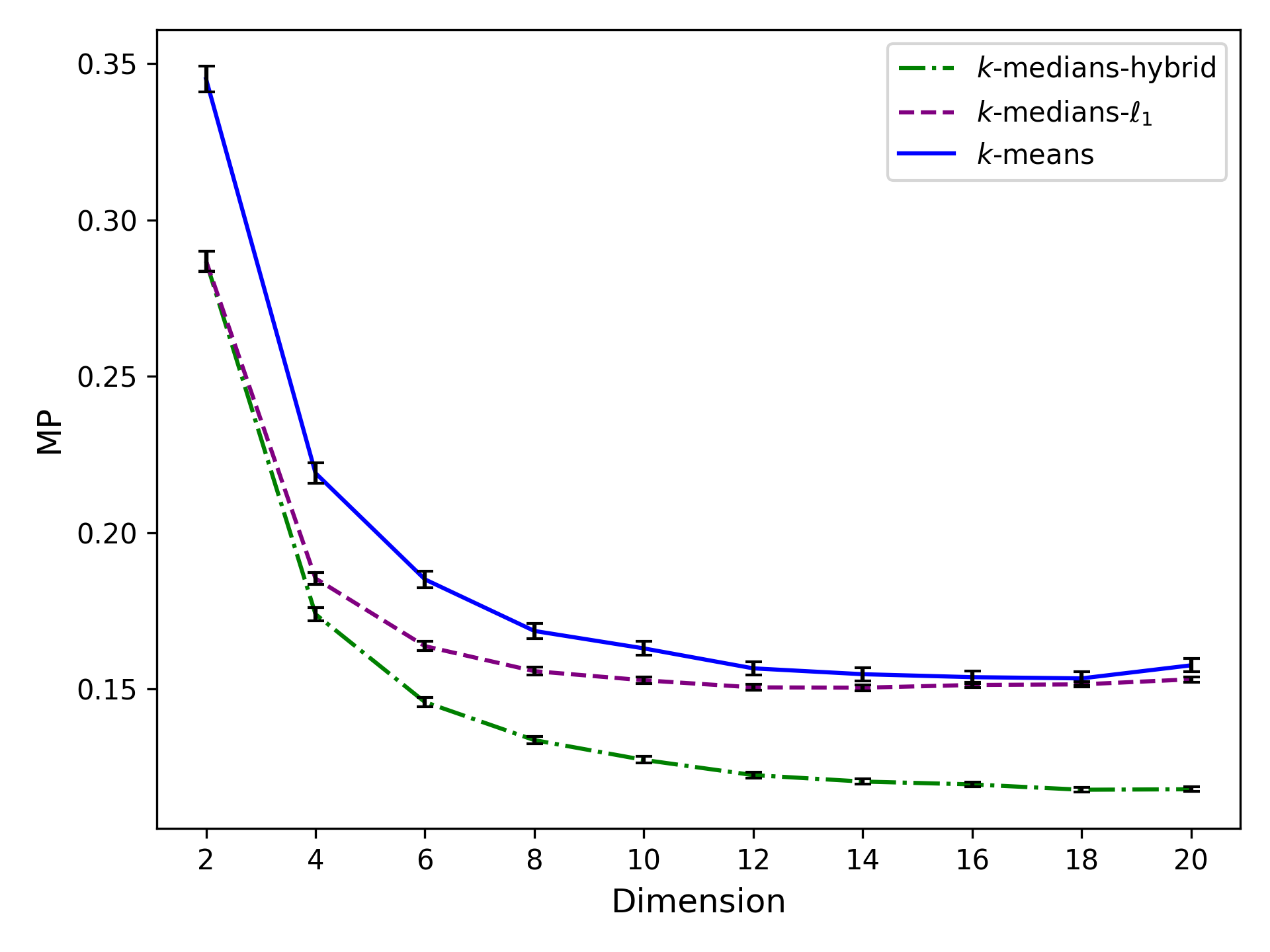}
			\end{center}
		\end{minipage}
		\caption{The effect of data dimension on all three clustering algorithms.}
		\label{fig:dimension}
	\end{figure}
	
	In \prettyref{fig:radius}, we study the effect of the location of outliers. As the centroid of the outlier distribution, the centroid estimates for \kmeans are expected to be pulled toward the same direction. Once the outliers are far away, the algorithm will start detecting the outliers as a single cluster. As a result, after a certain threshold on the norm of $\theta^\out$, the mislabeling error will stabilize at a high value. As the coordinatewise median-based centroid estimates will evolve according to the empirical order statistics, the centroid estimates will not move beyond the constellation of the true points for a moderate number of outliers. As a result, the mislabeling error will still stabilize but at a much smaller value. 
	\begin{figure}[t]
		\centering
		\begin{minipage}{0.5\textwidth}
			\begin{center}
				{\small Random initialization}
				\\
				\includegraphics[height=5.5cm]{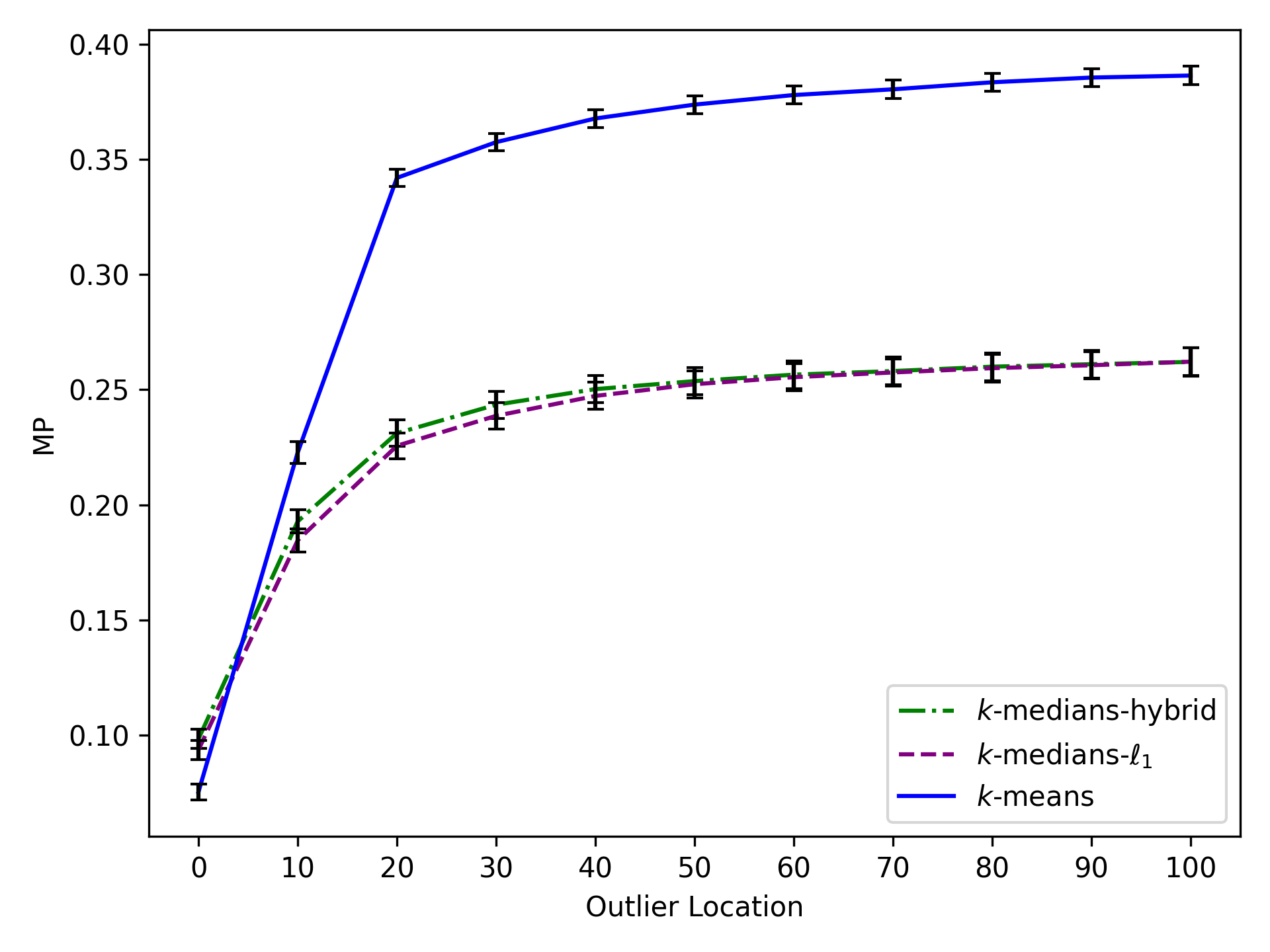}
			\end{center}
		\end{minipage}\hfill
		\begin{minipage}{0.5\textwidth}
			\begin{center}
				{\small Omniscient initialization}\\
				\includegraphics[height=5.5cm]{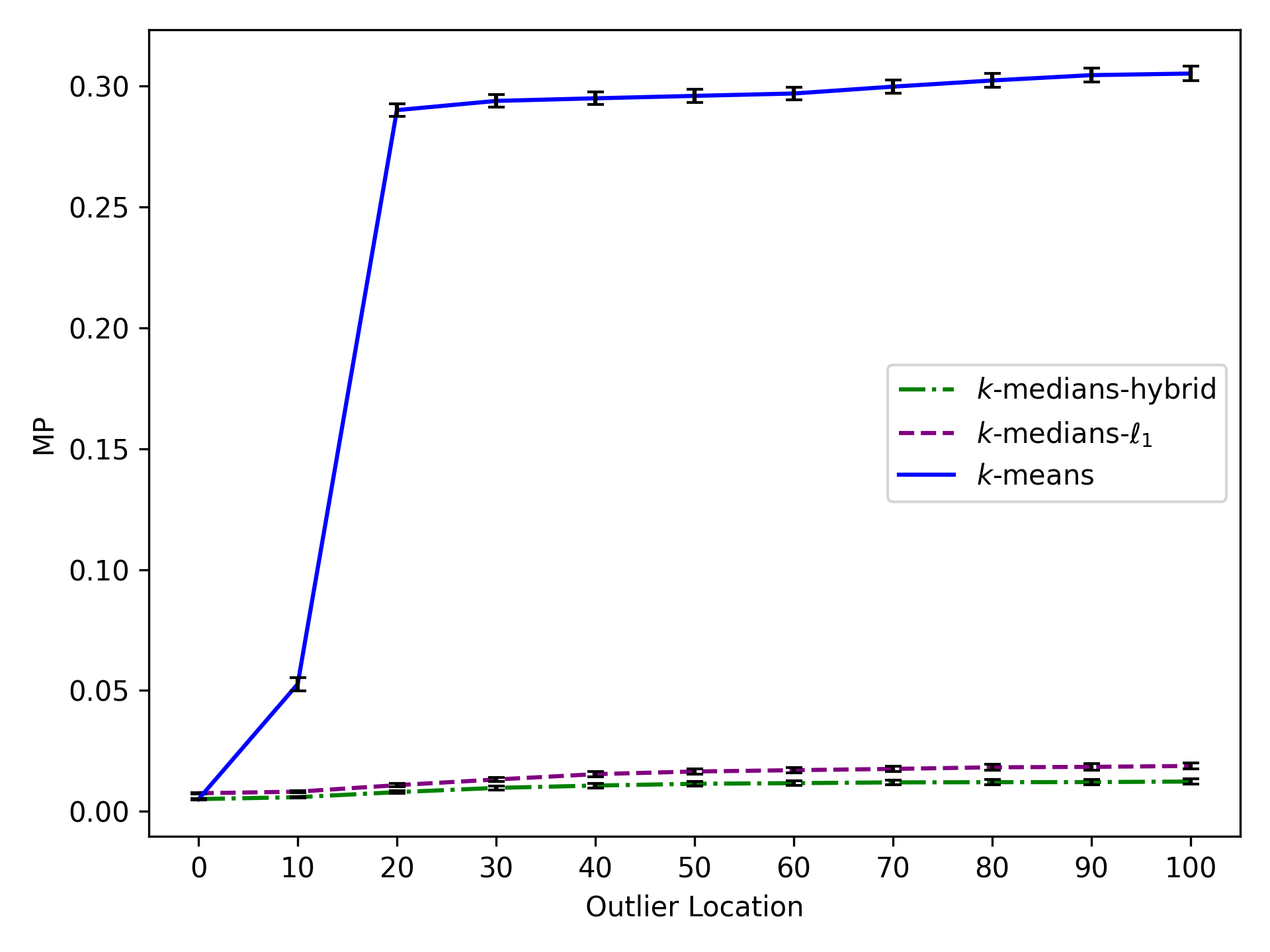}
			\end{center}
		\end{minipage}
		\caption{The effect of outlier location on all three algorithms.}
		\label{fig:radius}
	\end{figure}
	
	\subsection{Analysis with real datasets}
	Furthermore, we evaluate our proposed algorithm on public datasets:
	Letter Recognition ($D_{letter}$) \cite{misc_letter_recognition_59} and Pen-Based Recognition of Handwritten Digits ($D_{digit}$) \cite{misc_pen_based_recognition_of_handwritten_digits_81}. The $D_{letter}$ dataset contains 16 primitive numerical attributes (statistical moments and edge counts) of black-and-white rectangular pixel displays of the 26 capital letters in the English alphabet. The character images were based on 20 different fonts and each letter within these 20 fonts was randomly distorted to produce a file of 20,000 unique stimuli. The $D_{digit}$ dataset is a digit database containing attribute information on handwritten digits form 44 writers. These writers are asked to write 250 digits in random order inside boxes of 500 by 500 tablet pixel resolution. The 16 attributes of these images were collected in the dataset and we use all of them for the clustering work.
	%
	%
	
	\subsubsection{Experiment setup}
	
	For each public dataset, we choose 300 data points from three classes as inliers, where each class selects 100 data points.   
	Below are two ways we employed to obtain the outliers. 
	
	\begin{itemize}
		\item \textbf{Outliers from Multiple Classes (OMC):} We randomly choose outliers from the remaining classes. We vary the number of outliers in the set $
		\{0,20,40,60,80\}$.
		\item \textbf{Outliers from One Class (OOC):} We  choose outliers only from one of the remaining classes. As before, we vary the number of outliers in the set $
		\{0,20,40,60,80\}$.
		
	\end{itemize}
	
	\subsubsection{Results}
	
	\paragraph{Results on $D_{letter}$}
	We randomly select a set of 300 data points from three distinct letter classes: ``A", ``C", and ``F" and try to cluster them into three different classes. For the OOC outlier scenario we choose the letter class ``J" for sampling the outliers. We present the numerical study describing the effect of outlier proportions in \prettyref{fig:letter_multicalss_prop} and \prettyref{fig:letter_onecalss_prop} on $D_{letter}$.
	Both results show that our method consistently yields the lowest proportion of mislabeling in both scenarios (OMC and OOC), outperforming the other two algorithms. 
	Remarkably, our method yields better mislabeling rate even in absence of outliers. As expected, the performance of all three methods deteriorates as the proportion of outlier increases.
	
	\begin{figure}[t]
		\centering
		\begin{minipage}{0.5\textwidth}
			\begin{center}
				{\small Random initialization}\\
				\includegraphics[height=5.5cm]{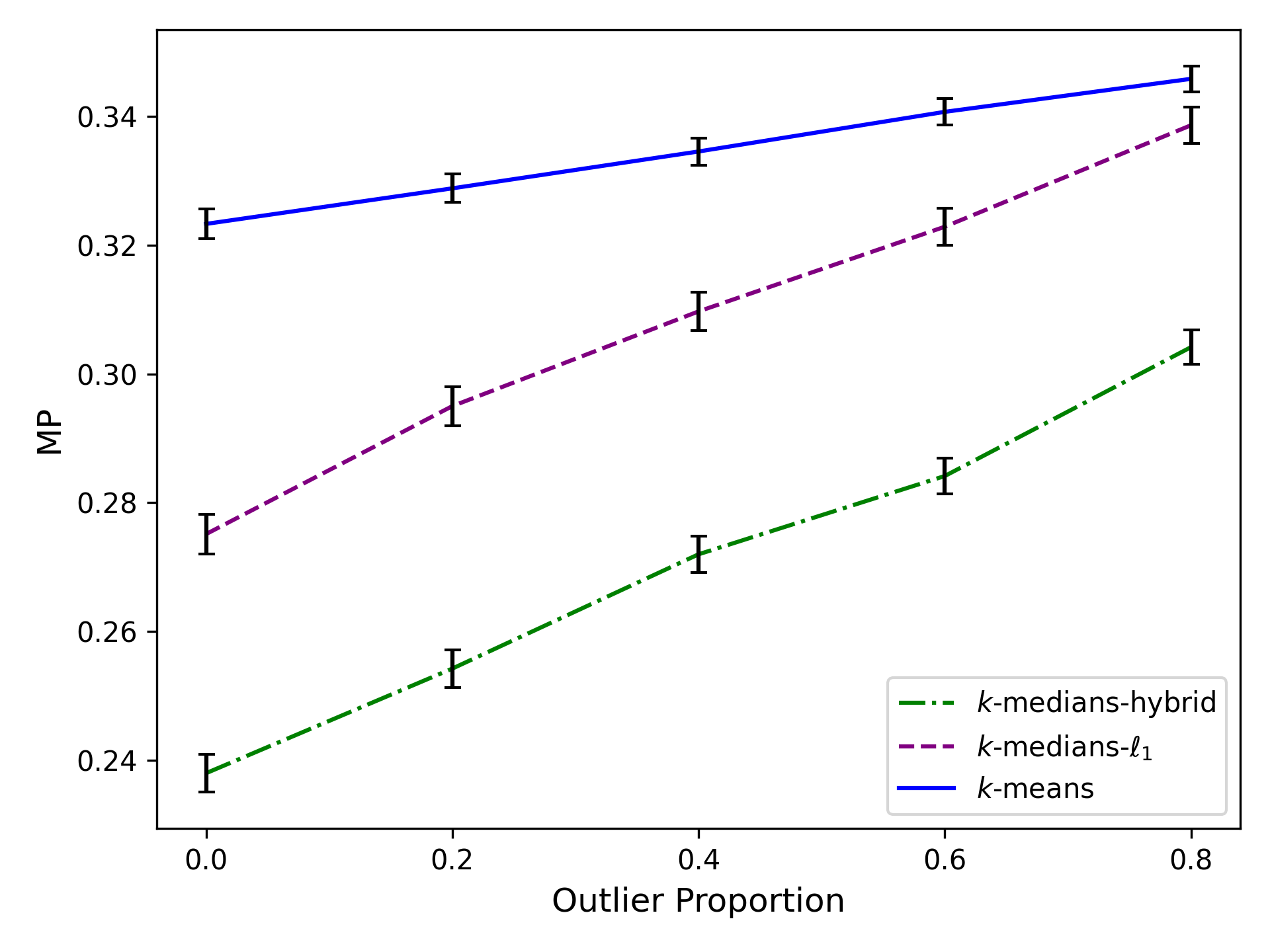}
			\end{center}
		\end{minipage}\hfill
		\begin{minipage}{0.5\textwidth}
			\begin{center}
				{\small Omniscient initialization}\\
				\includegraphics[height=5.5cm]{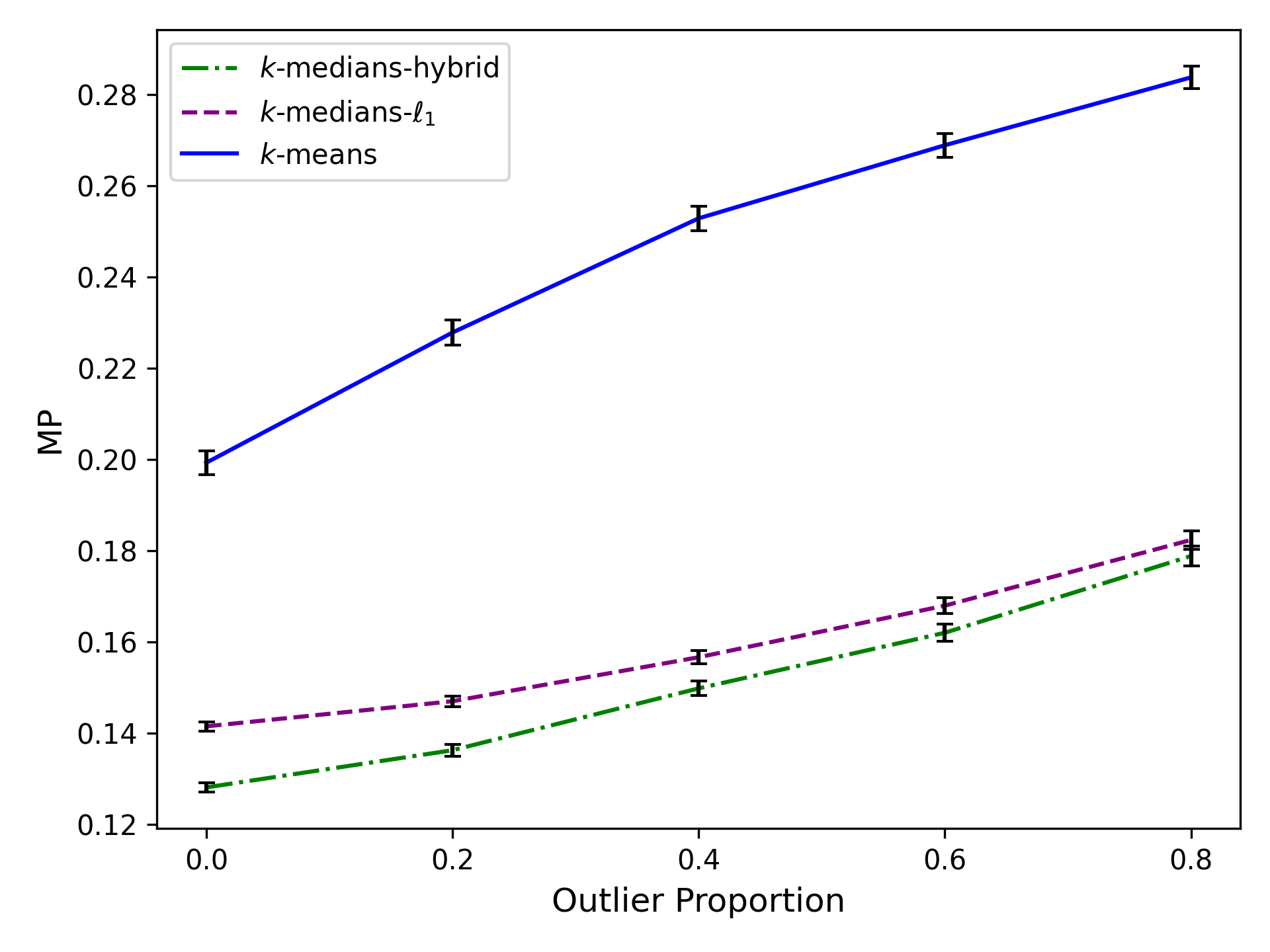}
			\end{center}
		\end{minipage}
		\caption{The effect of outlier proportion on clustering $D_{letter}$ (OMC).}
		\label{fig:letter_multicalss_prop}
	\end{figure}

	\begin{figure}[t]
		\centering
		\begin{minipage}{0.5\textwidth}
			\begin{center}
				{\small Random initialization}\\
				\includegraphics[height=5.5cm]{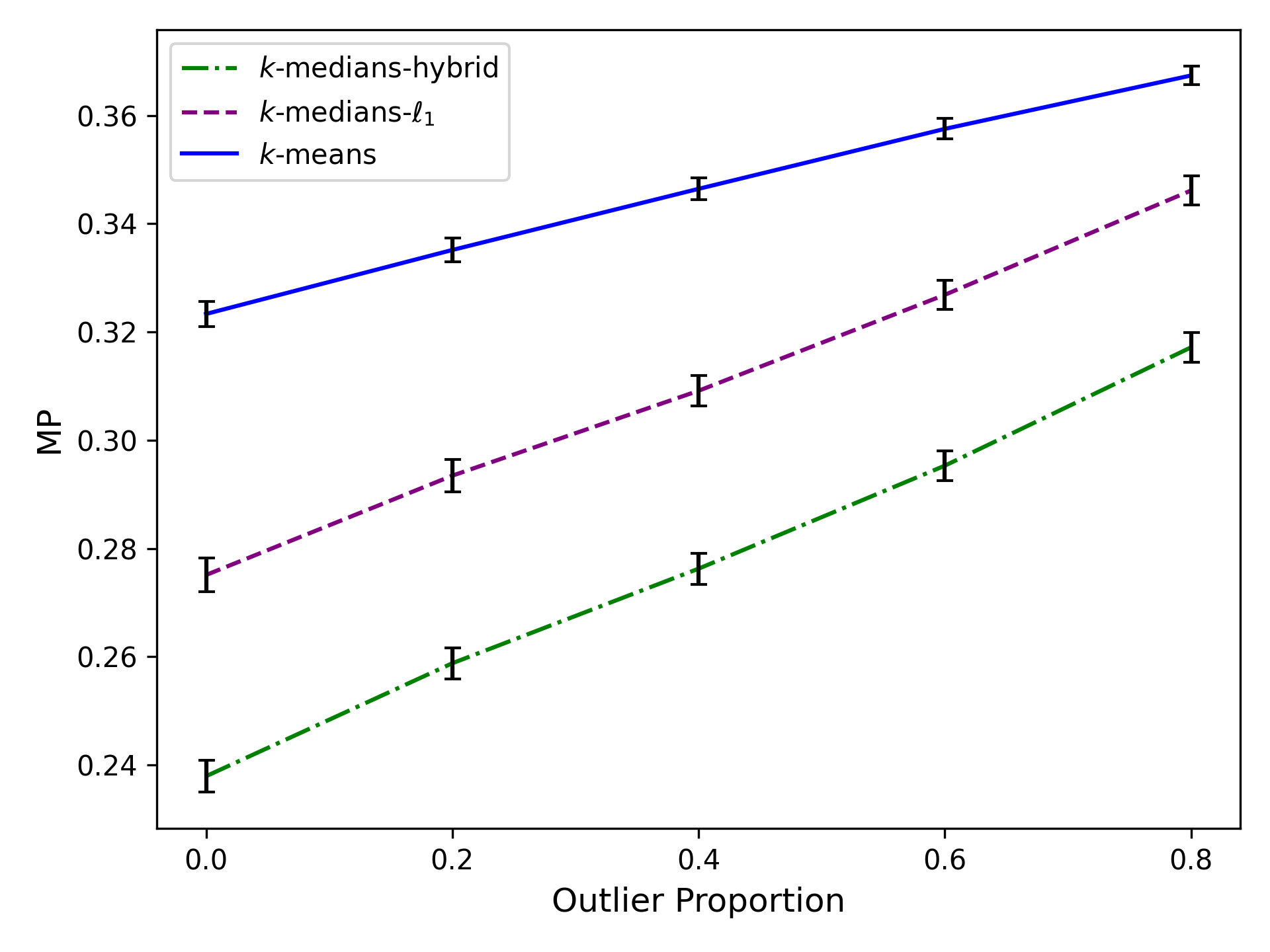}
			\end{center}
		\end{minipage}\hfill
		\begin{minipage}{0.5\textwidth}
			\begin{center}
				{\small Omniscient initialization}\\
				\includegraphics[height=5.5cm]{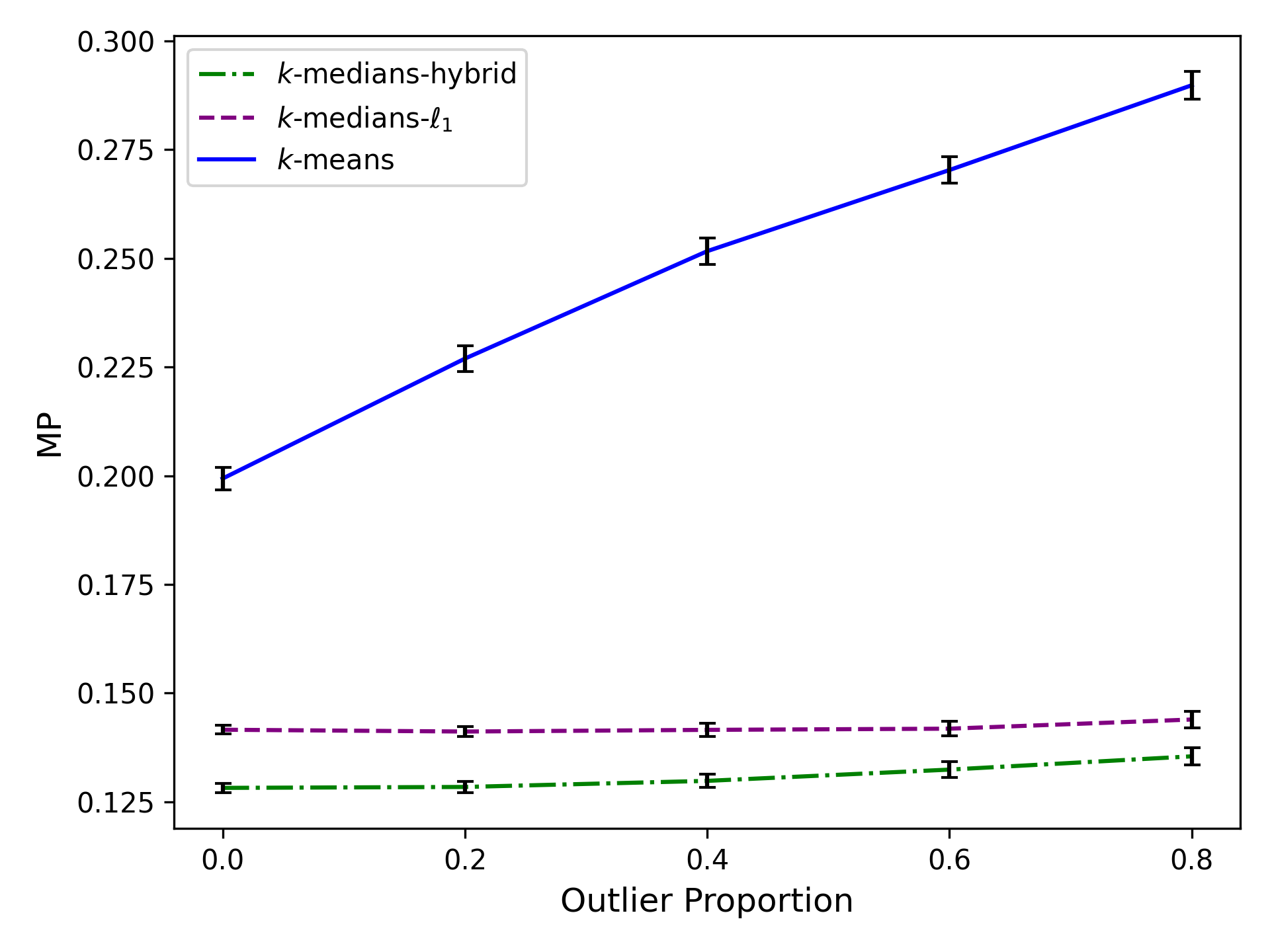}
			\end{center}
		\end{minipage}
		\caption{The effect of outlier proportion on clustering $D_{letter}$ (OOC).}
		\label{fig:letter_onecalss_prop}
	\end{figure}
	
	\paragraph{Results on $D_{digit}$}
	
	As before in $D_{letter}$, we randomly select a set of 300 data points, representing the inliers from three distinct digit classes: ``0", ``2", and ``5". In OMC, the outliers are randomly sampled from the remaining 7 digit classes, while in OOC, the outliers are exclusively drawn from the ``8" class.
	We present the numerical study describing the effect of outlier proportions in \prettyref{fig:pen_multicalss_prop} and \prettyref{fig:pen_onecalss_prop} on $D_{digit}$. The results show that our method yields the lowest proportion of mislabeling in both scenarios (OMC and OOC), outperforming the other two algorithms (except in OMC when the proportion of outliers surpasses 0.7). 
	Comparatively, \kmeans is more sensitive to outliers than other two, which leads it to have the worst performance. 
	As the number of outliers increases, 
	the performance of \kmeans deteriorates much more than the other two as we expected. 
	

	\begin{figure}[t]
		\centering
		\begin{minipage}{0.5\textwidth}
			\begin{center}
				{\small Random initialization}\\
				\includegraphics[height=5.5cm]{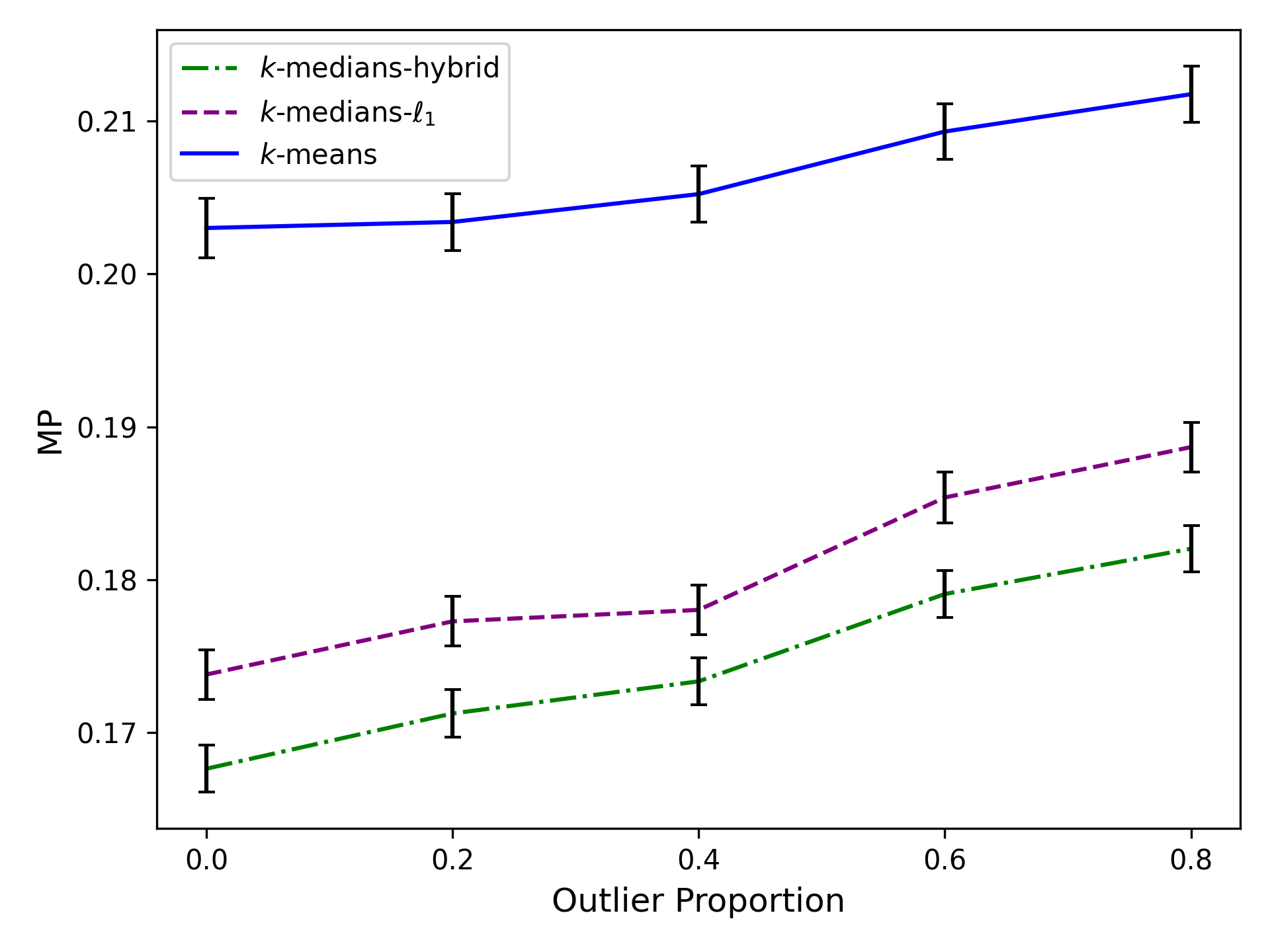}
			\end{center}
		\end{minipage}\hfill
		\begin{minipage}{0.5\textwidth}
			\begin{center}
				{\small Omniscient initialization}\\
				\includegraphics[height=5.5cm]{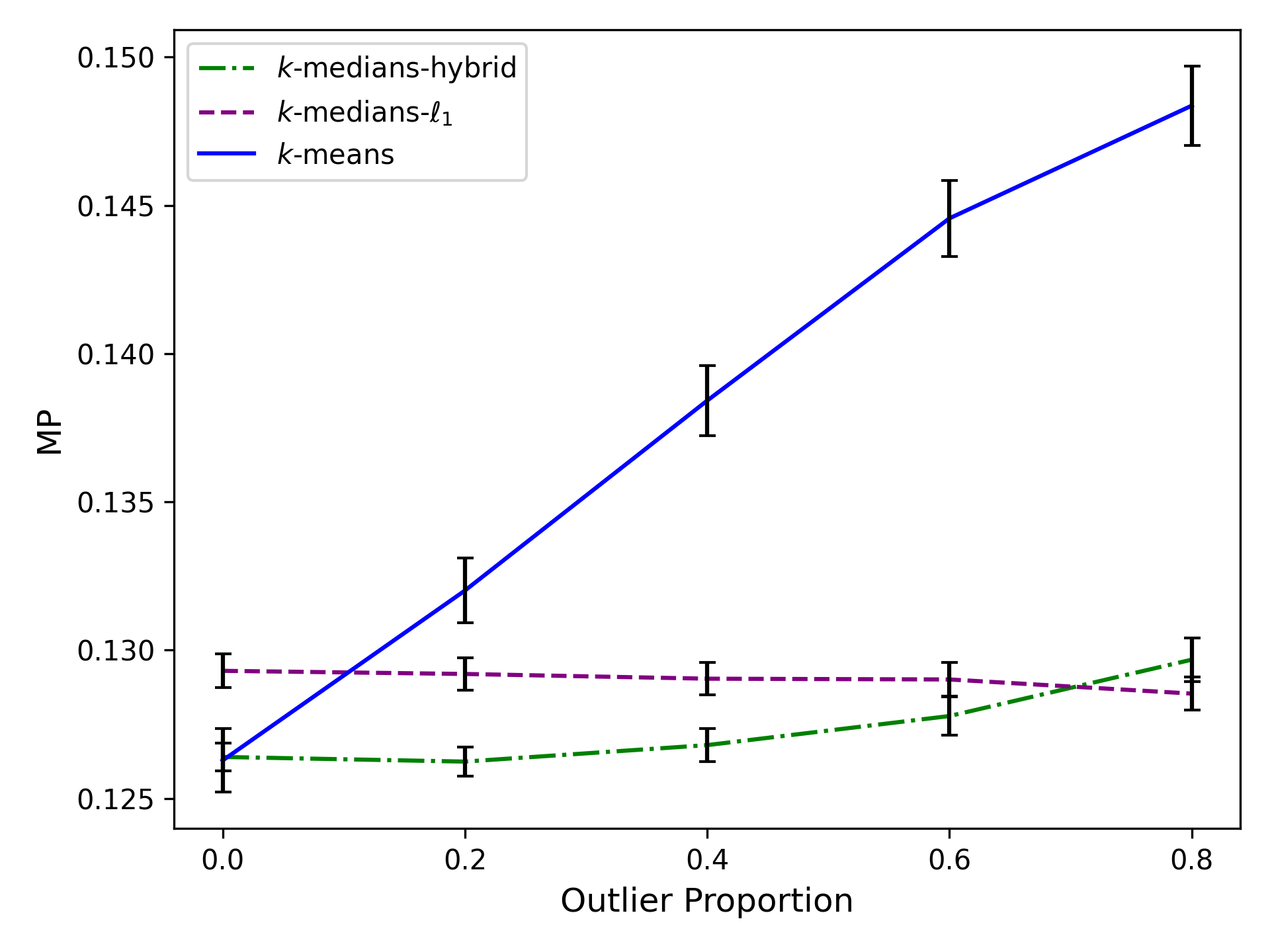}
			\end{center}
		\end{minipage}
		\caption{The effect of outlier proportion on clustering $D_{digit}$ (OMC).}
		\label{fig:pen_multicalss_prop}
	\end{figure}

	\begin{figure}[t]
		\centering
		\begin{minipage}{0.5\textwidth}
			\begin{center}
				{\small Random initialization}\\
				\includegraphics[height=5.5cm]{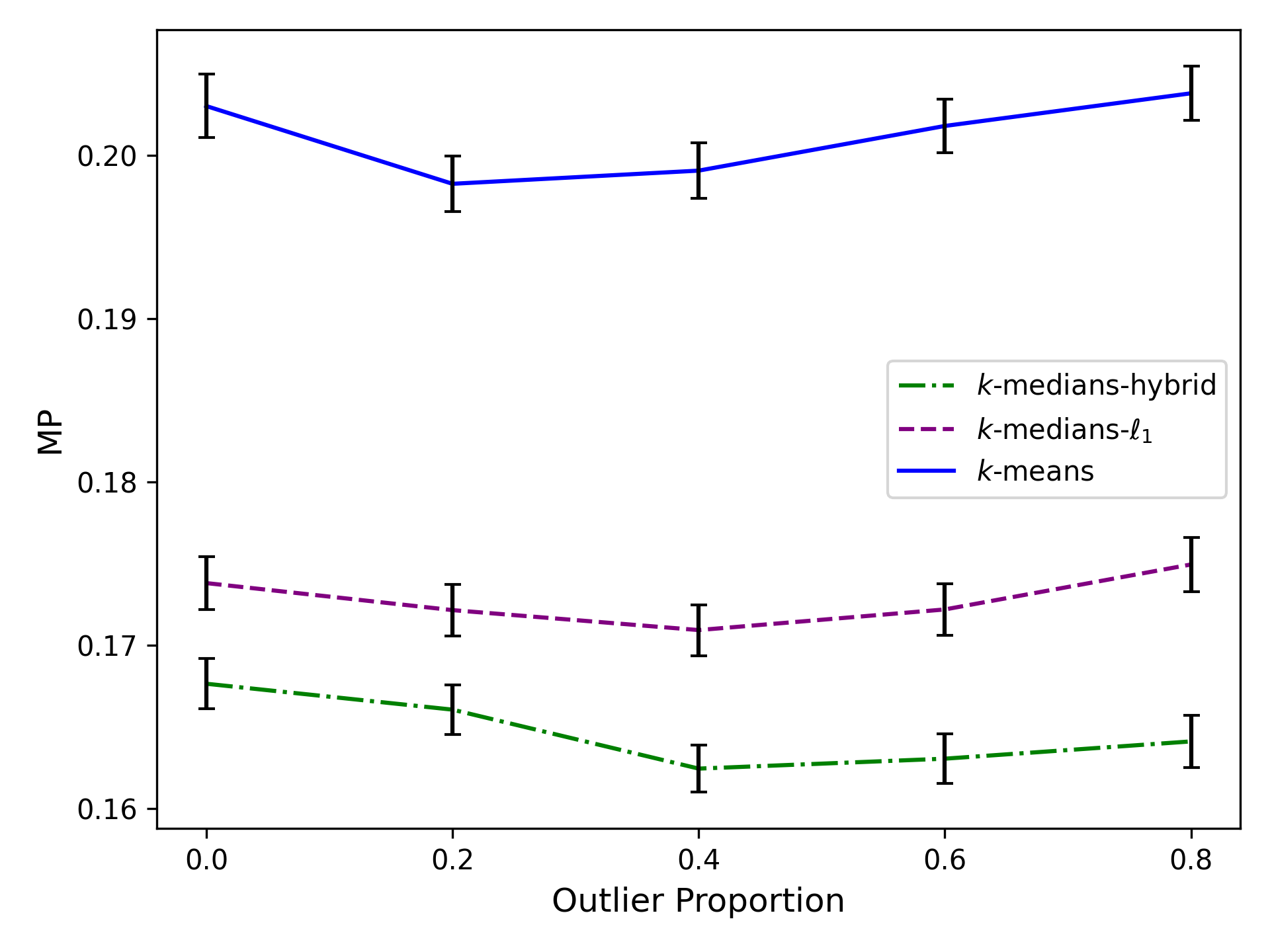}
			\end{center}
		\end{minipage}\hfill
		\begin{minipage}{0.5\textwidth}
			\begin{center}
				{\small Omniscient initialization}\\
				\includegraphics[height=5.5cm]{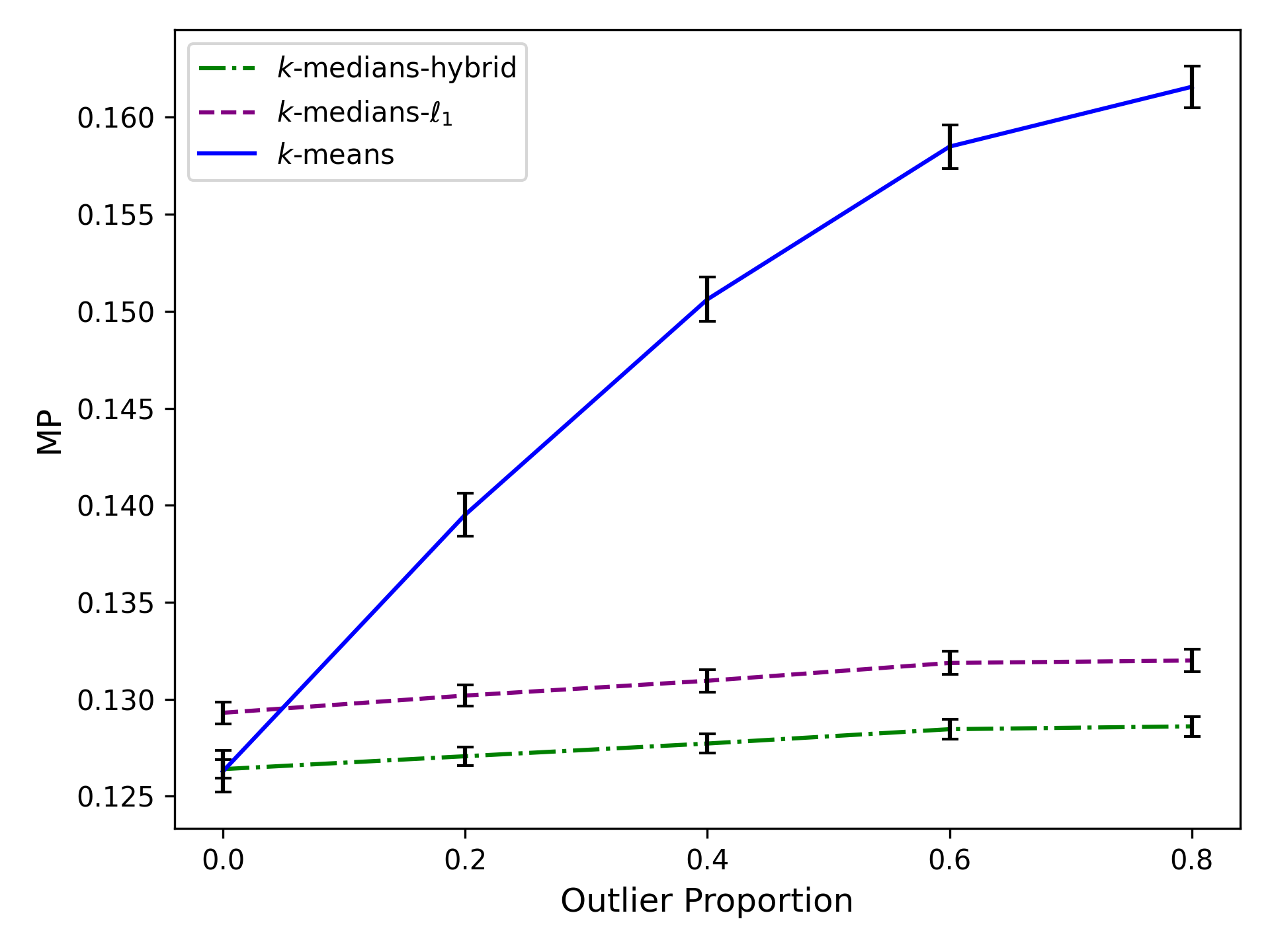}
			\end{center}
		\end{minipage}
		\caption{The effect of outlier proportion on clustering $D_{digit}$ (OOC).}
		\label{fig:pen_onecalss_prop}
	\end{figure}

	%
	%
	
	
	{ \begin{remark}[Comparisons with Robust-LP/SDP based clustering techniques]
			LP/SDP relaxations for robust $k$-means clustering are primarily implemented to obtain a robust low-dimensional projection of the data, on which a standard $k$-means algorithm can be applied. For example, see \cite[Algorithm 1]{srivastava2023robust} and \cite[Algorithm 1]{li2007noise}. For our study, we discuss \cite[Algorithm 1]{srivastava2023robust}, where, after obtaining the top eigenvectors of the data matrix $X$ using a Robust-LP method from Steps 1-3, a standard $k$-means clustering algorithm is applied in Step 4. Step 5 in their algorithm is used to detect the outliers, which we skip as it is beyond the scope of our work. In view of the above, our algorithm can be used to produce a variant of the Robust-LP clustering method, where we substitute the $k$-means step with our \lcomed strategy. Next, we revisited our real data analyses to compare the following options:
			\begin{itemize}
				\item Robust-LP based dimension reduction, followed by our proposed \lcomed algorithm with random initialization
				
				\item Robust-LP based dimension reduction, followed by the Lloyd algorithm ($k$-means) with random initialization
				\item Robust-LP-based dimension reduction, followed by \kmed with random initialization.
			\end{itemize} 
			The second method mentioned above is essentially the entire clustering method in \cite{srivastava2023robust}. We use random initialization for all our choices for comparison purposes, as finding a fast and robust initialization is beyond the scope of our work. We use random initialization to demonstrate the effect of the above modification. We focus on the Letter dataset. To run the Robust-LP based method of \cite{srivastava2023robust}, we chose the parameter $\alpha=0.2$ as prescribed in the paper, and the other parameter $\beta=0.2$ is chosen to achieve the best performance improvement compared to our previous studies without dimension reduction. Our experiment setup studies how the mislabeling errors of all the above algorithms change as we increase the proportion of outliers in the data. We repeated all the experiments 5000 times and recorded the average mislabeling proportion and  95\% confidence bands \prettyref{fig:rev-robust_LP-Letter}.
			
			Notably, most of the performances improved compared to our previous studies, which used random initialization. However, the mislabeling proportions are still significantly higher compared to our earlier studies with omniscient initialization; see \prettyref{fig:letter_multicalss_prop} and \prettyref{fig:letter_onecalss_prop}. In the OOC setup, we observe that the \lcomed based method starts to outperform the other options as the mislabeling proportion increases. The above observations suggest that robust dimension reduction indeed facilitates clustering; however, the optimal algorithm will depend on how accurately and robustly the clustering methods are initialized. We leave it for future work to perform more in-depth studies. In terms of computation time, the Robust-LP-based variants are extremely slow, which makes them infeasible for large datasets, as mentioned in \cite[Section 5]{srivastava2023robust}. In addition to solving linear programming, the Robust-LP-based dimension reduction involves computing a kernel based on mutual distances among all the data points, which takes $dn^2$ steps. The Robust-LP method requires tuning the $\beta$ parameter, which adds to excess computational cost. Compared to the above, the \lcomed method, without any Robust-LP modifications, requires $O(dn(k+\log n))$ time to reach the theoretical mislabeling limit, which is almost linear in the sample size. See \prettyref{rmk:runtime} for the details. 
			
			\begin{figure}[t]
				\centering
				\begin{minipage}{0.5\textwidth}
					\begin{center}
						{\small Outcome in the OMC setup}\\
						\includegraphics[height=5.5cm]{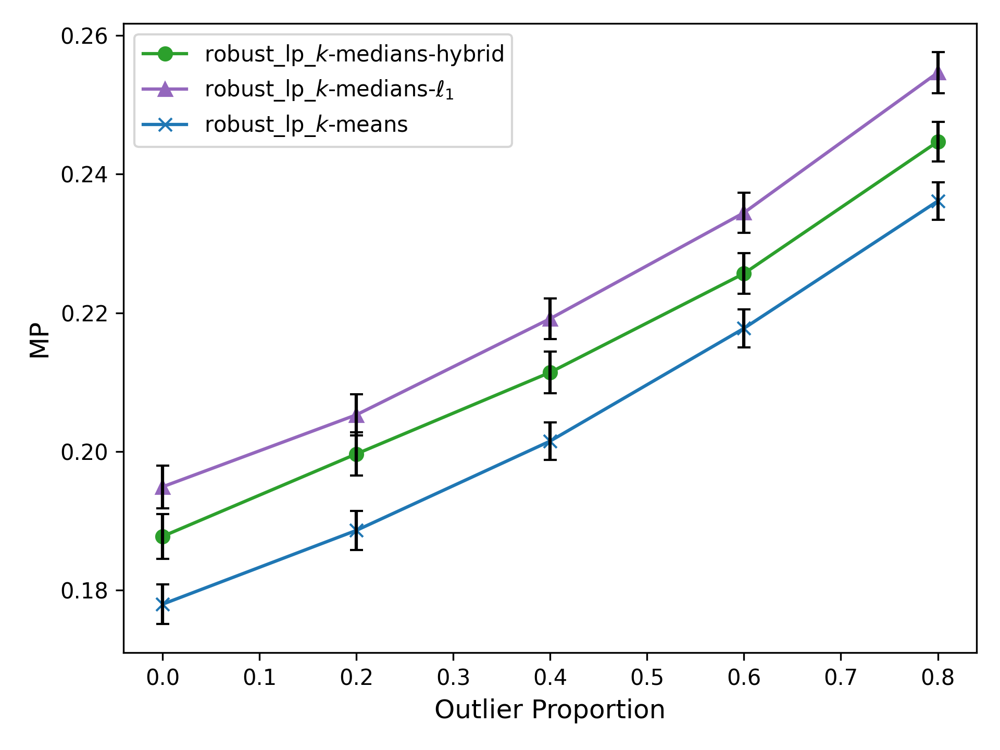}
					\end{center}
				\end{minipage}\hfill
				\begin{minipage}{0.5\textwidth}
					\begin{center}
						{\small Outcome in the OOC setup}\\
						\includegraphics[height=5.5cm]{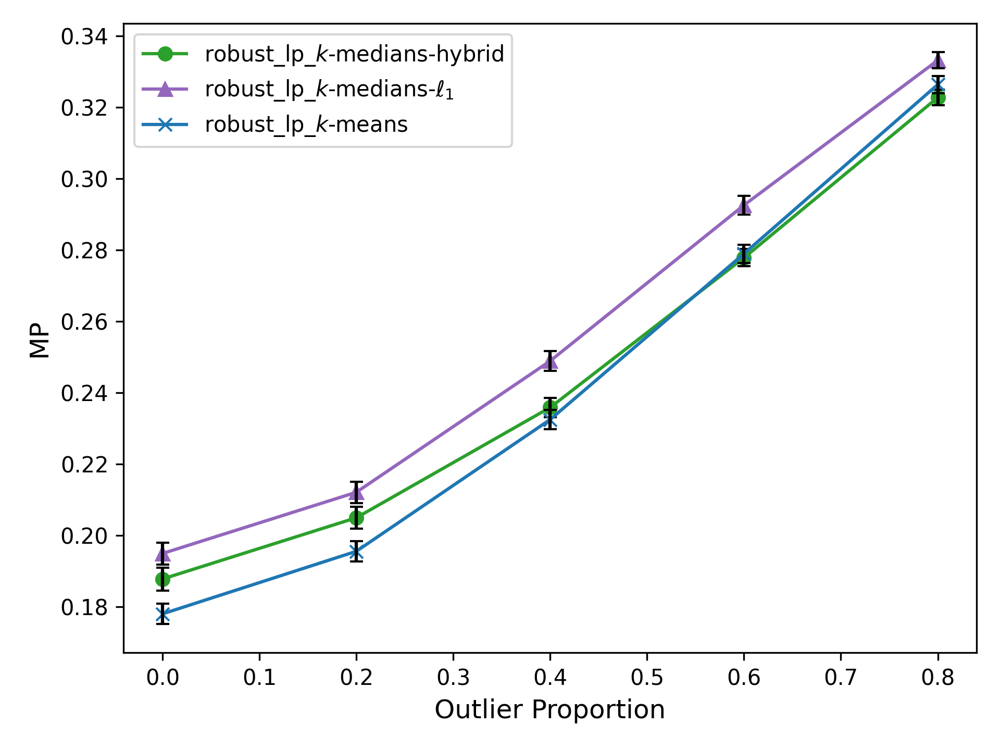}
					\end{center}
				\end{minipage}
				\caption{Comparison for randomly initialized, LP-based methods.}
				\label{fig:rev-robust_LP-Letter}
			\end{figure}


			We provide comments below on the comparison of theoretical guarantees for the Robust-LP method presented above. Note that \cite[Theorem 1]{srivastava2023robust} shows that the mislabeling guarantee provided by their method is approximately $\exp\sth{-{\Delta^2\over 64 \sigma^2}}$. The above mislabeling rate is suboptimal compared to the optimal rate $\exp\sth{-{\Delta^2\over 8 \sigma^2}}$, which may be due to the analysis of the LP-based dimension reduction step or the application of the $k$-means method, which lacks robustness properties. Nonetheless, assuming the signal-to-noise ratio is sufficiently large, we postulate that in the absence of outliers, our iterative algorithm, combined with a spectral-type low-dimensional projection method, should be able to guarantee an excellent mislabeling rate. 
			This is because it is possible to obtain Spectral type projections that guarantee the clusters are well-separated, and the projected data follow sub-Gaussian distributions (see, e.g., \cite{loffler2021optimality} for the guarantees of Projections along the top eigenvectors of the sample covariance matrix). Hence, the subsequent \lcomed algorithm with a good initialization should guarantee exponentially small mislabeling errors as in our theoretical results. In the presence of outliers, it might be possible to follow the argument of \cite{srivastava2023robust} and establish a similar mislabeling guarantee for the LP/SDP-based dimension reduction + \lcomed algorithm. However, this is beyond the scope of the current work.
		\end{remark}

		\section{Proof sketch of the main results}
		\label{sec:sketch}
		
		\subsection{Proof sketch of \prettyref{thm:ell1-subopt}}
		\label{sec:sketch-lower-bound}
		
		For proving the lower bound in \prettyref{thm:ell1-subopt} we assume that we know the true centroids and cluster the points using the $\ell_1$ distance. Note that for this specific result we have assumed that $\{w_i\}_{i=1}^n$ are independent $\calN(0,\sigma^2 I_{d\times d})$ random variables.
		Suppose that the parameter space in dimension $d$, with number of clusters $k$ known, is given by
		\begin{align}
			\Theta = \Big\{\theta:\ \theta=[\theta_1,\dots,\theta_k]\in \reals^{d\times k}, \Delta\leq \min_{g\neq h} \|\theta_g-\theta_h\|_2 \Big\}.
		\end{align}
		Given any vector $v\in \reals^d$ we define the $\ell_0,\ell_1$ and $\ell_2$ norms of $v$ respectively as
		\begin{align}
			\|v\|_1 = \sum_{i\in [d]}|v_i|,\
			\|v\|_2 = (\sum_{i\in [d]}v_i^2)^{1/2}.
		\end{align}
		Below we provide a proof sketch \prettyref{thm:ell1-subopt} (ii) for the two cluster setup. Suppose that we have estimates $\hat\theta_1,\hat\theta_2$ of the centroids, and we cluster the points using the $\ell_1$ distance. 
		Let $v=\theta_1-\theta_2, u_1=\theta_1-\hat \theta_1,u_2=\theta_2-\hat \theta_2$. Note that in view of our assumptions we have $\|u_1\|_2,\|u_2\|_2\leq C_0\sigma$. We will choose $\theta_1,\theta_2$ in such a way that $|v_j|>2\sigma C_0$ for all $j=1,\dots,d$.
		We observe that in this special case of two centroids we have
		\begin{equation*}
			\begin{aligned}
				&\PP\qth{\hat z_i=2,z_i=1} = \PP\qth{Y_i=\theta_1+w_i,\ \|Y_i-\hat\theta_1\|_1\geq \|Y_i-\hat\theta_2\|_1}
			\\
			&=\PP\qth{\|w_i+u_1\|_1\geq \|w_i+v+u_2\|_1},
			\\
			&\PP\qth{\hat z_i=1,z_i=2} = \PP\qth{Y_i=\theta_2+w_i,\ \|Y_i-\hat\theta_2\|_1\geq \|Y_i-\hat\theta_1\|_1}
			\\
			&=\PP\qth{\|w_i+u_2\|_1\geq \|w_i+v+u_1\|_1}.
			\end{aligned}
		\end{equation*}
		Hence, we can bound the expected error of mislabeling as
		\begin{equation*}
			\begin{aligned}
			\EE\qth{\ell(\hat z,z)}
			\geq 
			\frac 1n\sum_{i=1}^n\min\{
			&\PP\qth{\|w_i+u_1\|_1\geq \|w_i+v+u_2\|_1},
			\\
			&\PP\qth{\|w_i+u_2\|_1\geq \|w_i+v+u_1\|_1}\}.
		\end{aligned}
		\end{equation*}
		Without a loss of generality, let us consider the probability $\PP\qth{\|g+u_2\|_1\geq \|g+v+u_1\|_1}$, where $g\sim N(0,\sigma^2I_{d\times d})$ and $u_1,u_2$ can depend on $g$ with $\|u_1\|_2,\|u_2\|_2\leq C_0\sigma$. Then it suffices to show that for the above probability we achieve the desired lower bound. The rest of the proof can be divided in the following steps:
		\begin{itemize}
			\item \textbf{Selecting appropriate $\theta_1,\theta_2$:}
			Without a loss of generality let $d=2m,m\geq 1$. We choose $\theta_1,\theta_2$ as 
			\begin{equation}\label{eq:m1}
				\begin{gathered}
					\theta_1=\{v_{j}\}_{j=1}^{2m},\quad
					v_{2j-1} =\Delta{(1+\delta)\over \sqrt{d(1+\delta^2)}},\\
					v_{2j} =\Delta{(1-\delta)\over \sqrt{d(1+\delta^2)}},
					 j=1,\dots,d/2,
					\quad \theta_2 = (0,\dots,0),
				\end{gathered},
			\end{equation}
			with $\delta\in (0,1/2)$ is such that $2 > \frac C2+1 = 8\delta^2 > C$.
			
			\item \textbf{Obtaining bounds in terms of the distribution of $g$:} We show that
			\begin{equation*}
				\begin{aligned}
					&\PP\qth{\|g+u_2\|_1\geq \|g+v+u_1\|_1}
				\\
				&\geq  \PP\qth{g_j\geq {\|v\|_1\over 2 d}+2\max\{C_0,1\}\sigma,\ j=1,\dots, d}.
				\end{aligned}
			\end{equation*}

			\item  \textbf{Conclude using Gaussian tail bounds:} Using a Gaussian tail bound we show that whenever $\Delta$ is significantly larger than $\sigma\sqrt d$ we get
			\begin{align*}
				\PP\qth{g_j\geq {\|v\|_1\over 2 d}+2\max\{C_0,1\}\sigma} 
				\gtrsim \exp\sth{-{\Delta^2\over (8+C)d\sigma^2}},
			\end{align*}
			which leads us to the required bound
			\begin{equation*}
				\begin{aligned}
					&\PP\qth{\|g+u_2\|_1\geq \|g+v+u_1\|_1}
				\\
				&\geq 
				\prod_{j=1}^d
				\sth{\PP\qth{g_j\geq {\|v\|_1\over 2 d}+2\max\{C_0,1\}\sigma} }
				\\
				&\gtrsim 
				\exp\sth{-{\Delta^2\over (8+C)\sigma^2}}.
				\end{aligned}
			\end{equation*}
		\end{itemize}
		The idea of proving the result with a general number of clusters follows similarly, and is provided in \prettyref{app:l1-mislabel}.

		\subsection{Proof sketch of \prettyref{thm:main}}
		
		The proof of mislabeling guarantee for the \lcomed algorithm relies on the following steps: 
		\begin{enumerate}[label=(\alph*)]
			\item analyzing the accuracy of the clustering method based on current center estimates,
			\item analysis of the next center updates based on current labels.
		\end{enumerate} 
		The above steps are captured in the following lemma that bounds the mislabeling error based on the centroid estimation error in the previous step and conversely bounds the centroid estimation error based on the mislabeling proportion in the last step. This is the essence of the proof of \prettyref{thm:main}.
		\begin{lemma}\label{lmm:Hs-Ls-bound}
			Fix $\epsilon_0\in (0,\frac 12]$ and $\gamma_0\in ({10\over n\alpha},\frac 12]$. Suppose that an adversary added at most $n\alpha(1-\delta)$ many outliers for some $\delta>0$ and $n\alpha \geq c\log n$. Then there is an event $\calE_{\gamma_0,\epsilon_0}$ with
			$$\PP\qth{\calE_{\gamma_0,\epsilon_0}}\geq 1-2(k^2+k)n^{-c/4}-8dke^{-0.3n}$$
			on which the following holds:
			\begin{enumerate}[label=(\roman*)]
				\item If $\Lambda_s\leq \frac 12-\epsilon_0$ then $H_{s+1} \geq \frac 12 +\min\sth{{\delta-2\tau\over 2(2-\delta)},\frac 12-\tau}$ where $\tau={17\over 2\epsilon_0^2 (\snr\sqrt{\alpha/ (1+{dk/n})})^2 }$,
				\item If $H_s\geq \frac 12+\gamma_0$ then $\Lambda_s\leq {8\sqrt 3\over \Delta/\sigma}\sqrt{d \over  {\gamma_0\alpha}}$.
			\end{enumerate}
		\end{lemma}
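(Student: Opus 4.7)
The plan is to work on a single good event $\calE_{\gamma_0,\epsilon_0}$ obtained by intersecting three kinds of concentration events: the events $\econ_{\epsilon_0}$ from \prettyref{lmm:Chernoff-binom} applied across all ordered cluster pairs $(g,h)$ with $g\neq h$, the event $\eEigen$ from \prettyref{lmm:eigen_bound} that bounds the top eigenvalue of every cluster-wise Gram matrix $\sum_{i\in T_g^*} w_i w_i^{\sfT}$, and the events $\eord_{\gamma_0}$ from \prettyref{lmm:order-concentration} applied to each of the $d$ noise coordinates. A union bound over these events yields the probability claimed in the lemma statement.

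For part~(ii) I would argue directly from the breakdown property of the coordinatewise median. Given $H_s\geq \tfrac12+\gamma_0$, each estimated cluster $T_g^{(s)}$ contains a strict majority of correctly labeled points (namely $n_{gg}^{(s)}/n_g^{(s)}\geq \tfrac12+\gamma_0$), so for every coordinate $j\in[d]$ the median $\hat\theta_{g,j}^{(s)}$ must lie between the $p$-th and $(1-p)$-th order statistics of $\{Y_{i,j}: i\in T_g^{(s)}\cap T_g^*\}$ for some $p\geq \gamma_0$. Subtracting $\theta_{g,j}$ reduces this to a quantile estimate on the sub-Gaussian errors $\{w_{i,j}\}$, which \prettyref{lmm:order-concentration} with $p_0=\gamma_0$ controls by $\sigma\sqrt{4(1+1/\alpha)/\gamma_0}$. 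Squaring, summing over the $d$ coordinates, maxing over $g$, and dividing by $\Delta$ delivers the stated bound on $\Lambda_s$ up to an absolute constant.

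For the harder part~(i), I would first carry out the standard $k$-means algebraic manipulation: for $i\in T_g^*$ mislabeled to $h$ at step $s+1$, expanding $\|Y_i-\hat\theta_h^{(s)}\|^2\leq \|Y_i-\hat\theta_g^{(s)}\|^2$ and using $Y_i=\theta_g+w_i$ gives $\langle w_i,\hat\theta_h^{(s)}-\hat\theta_g^{(s)}\rangle \geq \tfrac12\bigl(\|\theta_g-\hat\theta_h^{(s)}\|^2-\|\hat\theta_g^{(s)}-\theta_g\|^2\bigr)$; combined with $\|\hat\theta_g^{(s)}-\theta_g\|\leq (\tfrac12-\epsilon_0)\Delta$ and the triangle-inequality bound $\|\theta_g-\hat\theta_h^{(s)}\|\geq (\tfrac12+\epsilon_0)\|\theta_g-\theta_h\|$, this simplifies to $\langle w_i,\hat\theta_h^{(s)}-\hat\theta_g^{(s)}\rangle\geq \epsilon_0\|\theta_g-\theta_h\|^2$. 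Writing $\hat\theta_h^{(s)}-\hat\theta_g^{(s)}=(\theta_h-\theta_g)+(\xi_h-\xi_g)$ with $\xi_h=\hat\theta_h^{(s)}-\theta_h$, I would split this condition asymmetrically into: (a) $\langle w_i,\theta_h-\theta_g\rangle\geq \epsilon_0^2\|\theta_g-\theta_h\|^2$, whose count over $i\in T_g^*$ is controlled by \prettyref{lmm:Chernoff-binom} with parameter $\epsilon_0$ (producing the crucial $\epsilon_0^{-4}$ factor in $\tau$); and (b) $\langle w_i,\xi_h-\xi_g\rangle\geq \epsilon_0(1-\epsilon_0)\|\theta_g-\theta_h\|^2$, whose count I would bound by Markov's inequality applied to $\sum_{i\in T_g^*}\langle w_i,\xi_h-\xi_g\rangle^2\leq 6\sigma^2(n_g^*+d)\|\xi_h-\xi_g\|^2$ from \prettyref{lmm:eigen_bound}, together with $\|\xi_h-\xi_g\|\leq 2\Lambda_s\Delta\leq \Delta$. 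Summing the per-pair bound over $h\neq g$ and using $k\leq 1/\alpha$ (since $n=\sum_g n_g^*\geq k\cdot n\alpha$) gives $n_g^*-n_{gg}^{(s+1)}\leq \tau n_g^*$, so the first ratio in $H_{s+1}$ satisfies $n_{gg}^{(s+1)}/n_g^*\geq 1-\tau$.

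For the second ratio $n_{gg}^{(s+1)}/n_g^{(s+1)}$, I would decompose $n_g^{(s+1)}=n_{gg}^{(s+1)}+\sum_{h\neq g}n_{hg}^{(s+1)}+O_g^{(s+1)}$ into correctly labeled points, true points from other clusters misclassified into $g$, and outliers assigned to $g$. A symmetric per-pair argument (swapping the roles of $g$ and $h$) gives $\sum_{h\neq g}n_{hg}^{(s+1)}\leq \tau n_g^*$ after using $k\leq 1/\alpha$ and $n_h^*\leq n_g^*/\alpha$, while the adversary's budget $n^{\out}=n\alpha(1-\delta)$ combined with $n_g^*\geq n\alpha$ yields $O_g^{(s+1)}\leq (1-\delta)n_g^*$. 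Substituting $n_{gg}^{(s+1)}\geq (1-\tau)n_g^*$ into both the numerator and denominator (and using monotonicity of $a/(a+b)$) gives $n_{gg}^{(s+1)}/n_g^{(s+1)}\geq (1-\tau)/\bigl((1-\tau)+\tau+(1-\delta)\bigr)=(1-\tau)/(2-\delta)=\tfrac12+(\delta-2\tau)/(2(2-\delta))$, which is exactly the claimed bound. The main obstacle, and the step I expect to require the most care, is the asymmetric threshold allocation in part~(i): the $\epsilon_0^2$ versus $\epsilon_0(1-\epsilon_0)$ split is what converts the \prettyref{lmm:Chernoff-binom} contribution into the $\epsilon_0^{-4}$ factor appearing in $\tau$, and reconciling the eigenvalue-based contribution (whose natural scaling is only $\epsilon_0^{-2}$) with the same target via $k\leq 1/\alpha$, $\|\xi_h-\xi_g\|\leq \Delta$, and $\|\theta_g-\theta_h\|\geq \Delta$ is the main bookkeeping challenge.
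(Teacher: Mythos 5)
Your proposal is correct and follows essentially the same route as the paper: the same three concentration lemmas intersected into one event, the same indicator decomposition for part (i) (split into a drift term handled by Lemma \ref{lmm:Chernoff-binom} and a centroid-error term handled by Chebyshev plus the eigenvalue bound of Lemma \ref{lmm:eigen_bound}), and the same order-statistics sandwich of the contaminated median for part (ii), arriving at the identical final ratio $(1-\tau)/(2-\delta)$. The only differences — your asymmetric $\epsilon_0^2$ versus $\epsilon_0(1-\epsilon_0)$ threshold split in place of the paper's symmetric $\epsilon_0^2+\epsilon_0^2$, and sandwiching $\hat\theta_{g,j}^{(s)}$ directly rather than via the intermediate clean-subset median $\tilde\theta_g^{(s)}$ — are cosmetic.
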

		Establishing the above is arguably the most challenging part of the entire proof, as we need to examine how the presence of adversarial outliers affects clustering outcomes at each iteration. We use the above result to show that as long as the assumptions in the theorem statement are satisfied, with a high probability, we have
		\begin{align*}
			\Lambda_s \leq 0.3,\quad H_{s+1}\geq \frac 12 +\frac \delta6 \text{ for all } s\ge 1.
		\end{align*}
		Based on the above, in the next step we show that the probability with which our algorithm incorrectly assigns label $h$ to any point $X_i$ from cluster $g\neq h$, can be bounded as
		\begin{equation*}
			\begin{aligned}
			\PP\qth{z_i=g,\hat z_i^{(s+1)}=h}
			&\leq \PP\qth{\beta\|\theta_g-\theta_h\|^2
			\leq 2\langle w_i,\theta_h-\theta_g \rangle}
			\\ 
			& +\PP\qth{\|w_i\|\geq {\Delta\over 4\beta_1}},
			\end{aligned}
		\end{equation*}
		for suitably chosen parameters $\beta,\beta_1$. Then we use properties of the sub-Gaussian random variable $w_i$ to achieve the desired bound that leads to the required mislabeling guarantees. See \prettyref{app:main-proof-mislabeling} for the details.
		
		\subsection{Proof sketch of \prettyref{thm:centroid}}
		Fix $g\in [k]$. Let $U_j$ denote the set of real numbers consisting of the $j^{\text{th}}$ coordinates of $\sth{Y_i:i\in T_g^{(s)}}$, and $W_j$ denote the set of real numbers consisting of the $j^{\text{th}}$ coordinates of $\sth{Y_i:i\in T_g^*}$. Then we show that there exists $C:=C(\tau)$ such that whenever $\sqrt \delta\cdot \snr\sqrt{\alpha/d}\geq C(\tau)$, with a high probability and with $\beta_g^\out = {n^{\out}\over n_g^*}, g=1,2,\dots,k$, we have
		\begin{equation*}
			\begin{aligned}
				&W_j^{(\ceil{{n_g^*\pth{\frac 12+e^{-{\Delta^2\over (8+\tau)\sigma^2}}}}})}
			\\
			&
			\leq U_j^{(\floor{{n_g^{(s)}/2}})}
			\leq W_j^{(\floor{{n_g^*\pth{\frac 12-{\beta_g^{\out}\over 1+\beta_g^{\out}}-e^{-{\Delta^2\over (8+\tau)\sigma^2}}}}})}.
			\end{aligned}
		\end{equation*}
		In other words, each coordinate of the coordinatewise median of the estimated cluster is bounded from above and below by a perturbation of the corresponding coordinate value of the coordinatewise median of the true cluster. Using the fact that, in each coordinate, $\med(\sth{Y_i:i\in T_g^{*}})$ is close to $\theta_g$ (see \prettyref{lmm:order-concentration-smallx}) we obtain the desired result. See \prettyref{app:proof-centroid} for the details.}

	\appendices
	
	\section{Proof of \prettyref{thm:ell1-subopt}}
	\label{app:l1-mislabel}
	
	\noindent \textbf{Base case: Number of clusters is two}.
	In view of the proof sketch in \prettyref{sec:sketch-lower-bound} we only bound $\PP\qth{\|g+u_2\|_1\geq \|g+v+u_1\|_1}$, where $g\sim N(0,\sigma^2I_{d\times d})$ and $u_1,u_2$ can depend on $g$ with $\|u_1\|_2,\|u_2\|_2\leq C_0\sigma$. Then we have
	\begin{align}
		&~\PP\qth{\|g+u_2\|_1\geq \|g+v+u_1\|_1}
		\nonumber\\
		&\stepa{=} \PP\Biggl[\sum_{j=1}^d 2\min\{|g_j+u_{2,j}|,|v_j+u_{1,j}-u_{2,j}|\}\indc{\sign(g_j+u_{2,j})\neq\atop\sign(v_j+u_{1,j}-u_{2,j})}
		\nonumber \\
		&\quad \quad \quad\geq \|v+u_1-u_2\|_1\Biggr]
		\label{eq:lone4}\\
		&\stepb{\geq} \PP\qth{|g_j|\geq {|v_j| \over 2} +\frac 32 C_0\sigma,\ \sign(g_j)\neq\sign(v_j)\ j=1,\dots, d}
		\nonumber\\
		&\stepc{\geq} \PP\qth{g_j\geq {|v_j| \over 2} +\frac 32\max\{C_0,1\}\sigma,\ j=1,\dots, d}
		\label{eq:lone5}
	\end{align}
	where the justification for steps (a), (b) and (c) are given below.
	Step (a) follows using
	\begin{align*}
		&~\|g+v+u_1\|_1
		=\sum_{j=1}^d |g_j+v_j+u_{1,j}|
		\nonumber\\
		&= \sum_{j=1}^d \Big(|g_j+u_{2,j}|+|v_j+u_{1,j}-u_{2,j}|
		 -2\min\{|g_j+u_{2,j}|,
		 \\
		 &\quad \quad \quad|v_j+u_{1,j}-u_{2,j}|\}\indc{\sign(g_j+u_{2,j})\neq\atop\sign(v_j+u_{1,j}-u_{2,j})}\Big)
		\nonumber\\
		&= \|g+u_2\|_1 + \|v+u_1-u_2\|_1
		-\sum_{j=1}^d 2\min\{|g_j+u_{2,j}|,
		\\
		&\quad \quad \quad  |v_j+u_{1,j}-u_{2,j}|\}\indc{\sign(g_j+u_{2,j})\neq\atop\sign(v_j+u_{1,j}-u_{2,j})}.
	\end{align*}
	Step (b) follows as $\sup_{i\in\{1,2\}}\sup_{j\in [d]}|u_{i,j}|\leq C_0\sigma$ and $|v_j|>2\sigma C_0$ for all $j=1,\dots,d$. Step (c) follows since $\PP\qth{|g_j|\geq a, \sign(g_j)=s}=\PP\qth{g_j\geq a}$ for $a\geq 0$. Then we can bound the final probability term in \eqref{eq:lone5} using the following tail bound for $z\sim N(0,\sigma^2)$ \cite[Section A.4]{lu2016statistical}: for $t\geq \sigma$,
	\begin{equation}
		\label{eq:normal-tail}
		\begin{aligned}
			&\PP\qth{z\geq t} 
		= \frac 1{\sqrt {2\pi} \sigma}
		\int_{t}^\infty e^{-{x^2\over 2\sigma^2}}dx
		\\ &
		\geq \frac 1{\sqrt {2\pi}}{t/\sigma\over \pth{t\over \sigma}^2+1}e^{-{t^2\over 2\sigma^2}}
		\geq \frac 1{\sqrt {2\pi}}{\sigma\over 2t}e^{-{t^2\over 2\sigma^2}}.
		\end{aligned}
	\end{equation}
	Using the above we get
	\begin{equation*}
		\begin{aligned}
		&\PP\qth{g_j\geq \frac 12 |v_j| + \frac 32 \max\{C_0,1\}\sigma} 
		\\& 
		\geq {\sigma \cdot \exp\sth{-{(|v_j| +3\max\{C_0,1\}\sigma)^2\over 8\sigma^2}}
			\over ( |v_j| + 3\max\{C_0,1\}\sigma) \sqrt{2\pi} }
		\nonumber\\
		&\geq 
		e^{-\frac 1{8\sigma^2}\{(|v_j| +3\max\{C_0,1\}\sigma)^2+C_2\sigma|v_j|+C_3\sigma^2\}}.
		\end{aligned}
	\end{equation*}
	Then continuing \eqref{eq:lone5} with the independence of $g_i$-s we get
	\begin{equation*}
		\begin{aligned}
			&~\PP\qth{\|g+u_2\|_1\geq \|g+v+u_1\|_1}
		\\
		&\geq 
		\exp\sth{-\frac 1{8\sigma^2}\{\sum_{j=1}^d|v_j|^2+C_4\sigma
			\sum_{j=1}^d|v_j|+C_5d\sigma^2\}}
		\\
		&\geq 
		\exp\sth{-\frac 1{8\sigma^2}\{\Delta^2+C_4\sigma
			\Delta\sqrt d+C_5d\sigma^2\}}
		\end{aligned}
	\end{equation*}
	where the last inequality follows using $\sum_{j=1}^d|v_j|^2=\Delta^2$ and from Cauchy-Schwarz inequality we have $\sum_{j=1}^d |v_j|\leq \sqrt d \|v\|_2.$
	This finishes the proof of part (i).

	To prove the worst case bound in part (ii), without loss of generality and for simplicity of notation, let $d$ be an even number. We choose $\theta_1,\theta_2$ as in \eqref{eq:m1}. This implies that 
	\begin{align}
		\label{eq:lone6}
		\begin{gathered}
			v=\theta_1-\theta_2,\quad  \| v \|_1={\Delta\sqrt d\over \sqrt{1+\delta^2}},\quad \|v\|_2=\Delta,\\
			\min_j |v_j|=\Delta{(1-\delta)\over \sqrt{d(1+\delta^2)}}
			= {\|v\|_1\over 2d}+{\Delta\over \sqrt{d(1+\delta^2)}}{\pth{\frac 12-\delta}}.
		\end{gathered}
	\end{align}
	Then depending on $C,C_0$, we can pick $c_0>0$ large enough such that $\Delta>c_0\sigma\sqrt d$ ensures
	\begin{align}\label{eq:lone2}
		\min_j |v_j|\geq {\|v\|_1\over 2d}+4C_0\sigma.
	\end{align}
	Then we continue to bound \eqref{eq:lone4} as
	\begin{align}
		&~\PP\qth{\|g+u_2\|_1\geq \|g+v+u_1\|_1}
		\nonumber\\
		&\geq \PP\Biggl[\sum_{j=1}^d \min\{|g_j|-|u_{2,j}|,|v_j|-|u_{1,j}|-|u_{2,j}|\}\\
		&\quad \quad \quad \indc{\sign(g_j+u_{2,j})\neq\sign(v_j+u_{1,j}-u_{2,j})}\geq {\|v\|_1\over 2d}+C_0\sigma\Biggr]
		\nonumber\\
		&\stepa{\geq} \PP\Biggl[|g_j|-|{u_{2,j}}|\geq {\|v\|_1\over 2d}+C_0\sigma,
			\nonumber\\
			& \sign(g_j)\neq\sign(v_j),j\in [d]\Biggr]
		\nonumber\\
		&\stepb{\geq} \PP\qth{g_j\geq {\|v\|_1\over 2 d}+2\max\{C_0,1\}\sigma, j\in [d]}.
		\label{eq:lone3}
	\end{align}
	where (a) follows from the fact that \eqref{eq:lone2} implies 
	$$\begin{gathered}
		|v_j|-|u_{1,j}|-|u_{2,j}|\geq {\|v\|_1\over 2d}+C_0\sigma,\\ \sign(v_j+u_{1,j}-u_{2,j})=\sign(v_j),
	\end{gathered}$$
	and $\sign(g_j+u_{2,j})=\sign(g_j)$ when $|g_j|-|{u_{2,j}}|$ is positive, and (b) follows using $\max_{j\in [d]}|u_{2,j}|\leq C_0 \sigma$ and since $\PP\qth{|g_j|\geq a, \sign(g_j)=s}=\PP\qth{g_j\geq a}$ for $a\geq 0$ and any $s$. For simplicity of notation, let $y={\Delta\over \sqrt{(1+\delta^2)}}+4\max\{C_0,1\}\sigma\sqrt d$. Then we can bound the final probability term in \eqref{eq:lone3} using \eqref{eq:lone6} and \eqref{eq:normal-tail} as
	\begin{equation*}
		\begin{aligned}
			&~\PP\qth{g_j\geq {\|v\|_1\over 2 d}+2\max\{C_0,1\}\sigma} 
		\\
		&=
		\PP\qth{g_j\geq {y\over 2\sqrt d}} 
		\geq {4\sigma\sqrt d\over y\sqrt{2\pi} }\exp\sth{-{y^2\over 8d\sigma^2}}
		\end{aligned}
	\end{equation*}
	where the last inequality follows using \eqref{eq:normal-tail}.
	Given $C,C_0>0$, as $8\delta^2=\frac C2 +1>C$, we can pick $c_1>0$ large enough such that, whenever $\Delta\geq c_1\sigma\sqrt d$, the right most term in the above display is at least $\exp\sth{-{y^2\over (8+C)d\sigma^2}}$. In view of \eqref{eq:lone3} we get 
	\begin{equation*}
		\begin{aligned}
			&~\PP\qth{\|g+u_2\|_1\geq \|g+v+u_1\|_1}
		\\
		&\geq 
		\prod_{j=1}^d
		\sth{\PP\qth{g_j\geq {\|v\|_1\over 2 d}+2\max\{C_0,1\}\sigma} }
		\\
		&\geq 
		\exp\sth{-{y^2\over (8+C)\sigma^2}}.
		\end{aligned}
	\end{equation*}
	This finishes our proof. 
	
	\noindent \textbf{General number of centroids.} For general $k\geq 2$, we pick $\theta_1,\theta_2$ to be the centroids that are exactly $\Delta$ distance away and chosen according to the previous two centroid case. As there are at least $n\alpha$ points in the cluster $T_1$, we get
	$$
	\frac 1n\sum_{i=1}^n\PP\qth{\hat z_i\neq z_i}
	\geq  \frac 1n\sum_{i\in T_1}\PP\qth{\hat z_i\neq z_i}
	\geq \alpha \min_{i\in T_1}\PP\qth{\hat z_i\neq 1}.
	$$
	As we are labeling the points using the $\ell_1$ metric, for any $i\in T_1$, $\|Y_i-\hat \theta_1\|_1\geq \|Y_i-\hat \theta_2\|_1$ is a sufficient criteria to have $\hat z_i\neq 1$. This implies we have
	$$
	\frac 1n\sum_{i=1}^n\PP\qth{\hat z_i\neq z_i}
	\geq \alpha \min_{i\in T_1}\PP\qth{\|Y_i-\hat \theta_1\|_1\geq \|Y_i-\hat \theta_2\|_1}
	$$
	Then we can replicate the analysis for the $k=2$ case, with an appropriate $\delta>0$, to achieve $\alpha e^{-{\Delta^2\over (8+\tilde C)\sigma^2}}$ lower bound with $\tilde C> C$. As $\Delta>> \sigma\sqrt{\log(1/\alpha)}$, we achieve the desired lower bound.

	\section{Proof of \prettyref{thm:main} and \prettyref{lmm:Hs-Ls-bound}}
	\label{app:main-proof-mislabeling}
	
	\begin{proof}[Proof of \prettyref{thm:main}]
		For a simplicity of notations, we will use $\|\cdot\|$ and $\|\cdot\|_2$ interchangeably, unless specified otherwise, in all the proofs in this section. For $c_1,c_2$ to be chosen later, we define
		$$\gamma_0=\frac {c_1}{\pth{\snr\sqrt{\alpha\over d}}^2},\quad \epsilon_0={c_2\over {\sqrt \delta \cdot \snr\sqrt{\alpha\over 1+dk/n}}}.$$
		Then from \prettyref{lmm:Hs-Ls-bound} it follows that \footnote{Note that \prettyref{lmm:Hs-Ls-bound} required $\gamma_0\geq {10\over n\alpha}$, which translates to the requirement $\snr<\sqrt{nd}$. This is good enough for us as the target mislabeling bound of $\exp(-\Theta((\snr)^2))$ becomes trivial for $\snr>\Omega(\sqrt{\log n})$.}, we can choose $c_1,c_2,c_3,c_4>0$ such that whenever $\snr\sqrt{\alpha/d}>c_3$ and $\sqrt{\delta}\cdot \snr\sqrt{\alpha/d} \geq c_4$, on the high probability event $\calE_{\gamma_0,\epsilon_0}$ defined from \prettyref{lmm:Hs-Ls-bound} we have
		\begin{itemize}
			\item if $\Lambda_0\leq \frac 12-\epsilon_0$ then $H_1\geq \frac 12 + {\delta\over 6}$.
			\item if $H_0\geq \frac 12+\gamma_0$ then $\Lambda_0\leq 0  .3$.
		\end{itemize}
		A second application of \prettyref{lmm:Hs-Ls-bound} guarantees that we can choose $c_3$ large enough such that if  
		$\sqrt{\delta}\cdot {\snr}\sqrt{\alpha\over d}\geq c_3$
		then on the high probability event $\calE_{\gamma_1,\epsilon_1}$ defined from \prettyref{lmm:Hs-Ls-bound}, with $\epsilon_1=0.2,\gamma_1={\delta\over 6}$, if $H_{s}\geq \frac 12 +\frac \delta6$ then for large enough $c_3,c_4$ we can ensure
		$$\Lambda_s\leq {\tau_1\over \sqrt \delta\cdot \snr\sqrt{\alpha/d}} \leq 0.3,$$ 
		where $\tau_1$ is an absolute constant, and if $\Lambda_s\leq 0.3$ then $H_{s+1}\geq \frac 12 +\frac \delta6$.
		Combining the above displays we get that on the event
		\begin{align}
			\label{eq:Prob-E2}
			\calE=\calE_{\gamma_0,\epsilon_0}\cap \calE_{\gamma_1,\epsilon_1}
			,\quad \PP\qth{\calE}
			\geq 1-4(k^2+k)n^{-c/4}-16de^{-0.3n}
		\end{align} 
		we have for large enough $c_3,c_4$,
		\begin{align}
			\label{eq:km27}
			\Lambda_s\leq {\tau_1\over \sqrt \delta \cdot \snr\sqrt{\alpha/d}} \leq 0.3,\quad H_{s+1}\geq \frac 12 +\frac \delta6 \text{ for all } s\ge 1.
		\end{align}
		We will show that $\PP\qth{\left.z_i\neq \hat z_i^{(s+1)}\right|\calE}$ is small for each $i\in [n]$. This will imply that on the event $\calE$, with a large probability $\ell(\hat z^{(s+1)},z)=\frac 1n\sum_{i=1}^n\indc{z_i\neq \hat z_i^{(s+1)}}$ is also small. From this, using a Markov inequality we will conclude the result.

		Note that $\indc{z_i\neq\hat z_i^{(s+1)}}
		=\sum_{h\in [k]\atop h\neq z_i} \indc{\hat z_i^{(s+1)}=h}$. Fix a choice for $z_i$, say equal to $g\in [k]$.
		For any $h\in[k],h\neq g$,
		\begin{align}\label{eq:km21}
			&\indc{z_i=g,\hat z_i^{(s+1)}=h}
			\leq \indc{\|Y_i-\hat\theta_h^{(s)}\|^2
				\leq \|Y_i-\hat\theta_g^{(s)}\|^2, i\in T_g^*}
			\nonumber\\
			&=\indc{\|\theta_g+w_i-\hat\theta_h^{(s)}\|^2
				\leq \|\theta_g+w_i-\hat\theta_g^{(s)}\|^2}
			\\
			&=\indc{\|\theta_g-\hat\theta_h^{(s)}\|^2-\|\theta_g-\hat\theta_g^{(s)}\|^2\leq 2\langle w_i,\hat\theta_h^{(s)}-\hat\theta_g^{(s)}\rangle}.
		\end{align}
		Using $$\|\theta_g-\hat\theta_g^{(s)}\|\leq \Lambda_s\Delta
		\leq \Lambda_s\|\theta_g-\theta_h\|$$ for all $g\in [k]$, and the triangle inequality we have
		\begin{equation}\label{eq:m2}
			\begin{aligned}
				&\|\theta_g-\hat\theta_h^{(s)}\|^2\geq \pth{\|\theta_g-\theta_h\|-\|\theta_h-\hat\theta_h^{(s)}\|}^2\\
				&\geq (1-\Lambda_s)^2\|\theta_g-\theta_h\|^2,\\
				&\|\theta_g-\hat\theta_h^{(s)}\|^2-\|\theta_g-\hat\theta_g^{(s)}\|^2
				\\
				&\geq (1-\Lambda_s)^2\|\theta_g-\theta_h\|^2
				-\Lambda_s^2\|\theta_g-\theta_h\|^2
				\\
				&\geq (1-2\Lambda_s)\|\theta_g-\theta_h\|^2.
			\end{aligned}
		\end{equation}
		In view of \eqref{eq:km21} the last display implies
		\begin{equation}
			\label{eq:km10}
			\indc{z_i=g,\hat z_i^{(s+1)}=h}
			\leq \indc{(1-2\Lambda_s)\|\theta_g-\theta_h\|^2\leq 2\langle w_i,\hat\theta^{(s)}_h-\hat\theta^{(s)}_g \rangle}.
		\end{equation}
		Note that \eqref{eq:km27} implies for some absolute constant $\tau_1>0$ 
		$$ \Lambda_s\leq {\tau_1\over \sqrt \delta\cdot \snr\sqrt{\alpha\over d}},\quad 1-2\Lambda_s\geq {1-{2\tau_1\over \sqrt \delta\cdot \snr\sqrt{\alpha\over d}}}.$$
		Define
		\begin{equation*}
			\beta=1-{2\tau_1\over \sqrt \delta\cdot \snr\sqrt{\alpha\over d}}-\beta_1, 
			\quad \beta_1=\pth{\tau_1\over \sqrt \delta\cdot \snr\sqrt{\alpha\over d}}^{1/2}.
		\end{equation*} 
		Define $\Delta_h=\hat\theta_h^{(s)}-\theta_h,h\in[k]$. Then we use \eqref{eq:km10} to get
		\begin{equation}
			\label{eq:km28}
			\begin{aligned}
			&\indc{z_i=g,\hat z_i^{(s+1)}=h}
			\leq \indc{(\beta+\beta_1)\|\theta_g-\theta_h\|^2\leq 2\langle w_i,\hat\theta^{(s)}_h-\hat\theta^{(s)}_g \rangle}\\
			&\leq \indc{\beta\|\theta_g-\theta_h\|^2\leq 2\langle w_i,\theta_h-\theta_g \rangle}
			+\indc{\beta_1\|\theta_g-\theta_h\|^2\leq 2\langle w_i,\Delta_h-\Delta_g \rangle}.
		\end{aligned}
		\end{equation}
		For the first term on the right most side we get
		\begin{equation}
			\begin{aligned}
			\label{eq:km32}
		&	\EE\qth{\indc{\beta\|\theta_g-\theta_h\|^2\leq 2\langle w_i,\theta_h-\theta_g \rangle}}
			\leq \exp{\sth{-{\beta^2\|\theta_h-\theta_g\|^2\over 8\sigma^2}}}
			\\
			&\leq \exp\sth{-{\beta^2\Delta^2\over 8\sigma^2}}
			\leq \exp\sth{-{\beta^2(\snr)^2\over 2}}.
		\end{aligned}
		\end{equation}
		To bound the second term, using $\|\Delta_h\|,\|\Delta_g\|\leq \Lambda_s\Delta$ and $\Lambda_s\leq {\beta_1^2}$ we have
		\begin{equation}
			\begin{aligned}
			\label{eq:km29}
			&\indc{\beta_1\|\theta_g-\theta_h\|^2\leq 2\langle w_i,\Delta_h-\Delta_g \rangle}
			\leq 
			\indc{\beta_1\Delta^2\leq 2\|w_i\|\|\Delta_h-\Delta_g\|}
			\\
			&\leq
			\indc{\beta_1\Delta^2\leq 4\|w_i\|\Lambda_s\Delta}
			\leq
			\indc{\|w_i\|\geq {\Delta\over 4\beta_1}}.
		\end{aligned}
		\end{equation}
		Next we use the following tail bound for a $\subG(\sigma^2)$ random vector, which follows from \cite[Theorem 2.1]{hsu2012tail} by choosing $A$ to be an identity matrix and $\mu=0$.
		\begin{lemma}\label{lmm:subg-norm-tail}
			Let $w\in \reals^d$ be a $\subG(\sigma^2)$ random variable. Then 
			$\PP\qth{\|w\|_2>\sigma\pth{\sqrt d+\sqrt{2t}}}\leq e^{-t}$.
		\end{lemma}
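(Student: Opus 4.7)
The plan is to reduce the tail of $\|w\|_2$ to a tail of $\|w\|_2^2$, control the latter through a moment generating function (MGF) estimate, and then invoke Chernoff. The key technical obstacle is that the sub-Gaussian hypothesis \eqref{eq:subg-defn} gives control only over \emph{linear} functionals $\iprod{a}{w}$, whereas $\|w\|_2^2$ is quadratic. I would bridge this gap using a standard Gaussian decoupling trick: the pointwise identity $\EE_g\qth{e^{\sqrt{s}\iprod{g}{w}}} = e^{s\|w\|_2^2/2}$ for $g \sim \calN(0, I_{d\times d})$ independent of $w$ and any $s>0$, which converts the quadratic form on the right into a linear form in $w$ that we can hit with \eqref{eq:subg-defn}.

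Concretely, swapping expectations by Fubini and applying \eqref{eq:subg-defn} conditionally on $g$ gives
$$
\EE\qth{e^{s\|w\|_2^2/2}} = \EE_g\EE_w\qth{e^{\sqrt{s}\iprod{g}{w}}} \leq \EE_g\qth{e^{s\sigma^2 \|g\|_2^2/2}} = (1 - s\sigma^2)^{-d/2},
$$
whenever $0 < s < 1/\sigma^2$, using the standard $\chi^2_d$ MGF at the last step. A Chernoff inequality then yields $\PP\qth{\|w\|_2^2 > r} \leq e^{-sr/2}(1-s\sigma^2)^{-d/2}$ for all such $s$. Optimizing the exponent (equivalently, using the sub-gamma manipulation $-\tfrac{d}{2}\log(1-u) \leq \tfrac{du}{2(1-u)}$ and completing the square in $\sqrt{u}$) produces
$$
\PP\qth{\|w\|_2^2 > \sigma^2\pth{d + 2\sqrt{dt} + 2t}} \leq e^{-t},
$$
which is exactly the $A = I_{d\times d}$, $\mu = 0$ specialization of the Hsu-Kakade-Zhang inequality. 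Finally, the elementary estimate $(\sqrt d + \sqrt{2t})^2 = d + 2\sqrt{2dt} + 2t \geq d + 2\sqrt{dt} + 2t$ upgrades this to the claimed tail on $\|w\|_2$.

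I expect the only genuine work to be the Chernoff optimization: one must choose $s$ as a function of $t$ and $d$ so that the exponent $-sr/2 + \tfrac{d}{2}\log\tfrac{1}{1-s\sigma^2}$ collapses cleanly to at most $-t$ when $r = \sigma^2(d + 2\sqrt{dt} + 2t)$. A convenient near-optimal choice is $s\sigma^2 = \sqrt{t/d}/(1+\sqrt{t/d})$, but any calibration that drives the quadratic-in-$\sqrt{u}$ residual to a manifest non-positive quantity suffices. The decoupling step itself is a one-line trick, and is the only place the sub-Gaussian parameter $\sigma$ enters the calculation.
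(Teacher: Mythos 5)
Your proposal is correct, and it follows the same route as the paper: the paper's entire proof is the one-line citation ``follows from \cite[Theorem 2.1]{hsu2012tail} by choosing $A$ to be an identity matrix and $\mu=0$,'' and your Gaussian-decoupling MGF bound $\EE\qth{e^{s\|w\|_2^2/2}}\leq (1-s\sigma^2)^{-d/2}$ followed by Chernoff optimization and the inequality $(\sqrt d+\sqrt{2t})^2\geq d+2\sqrt{dt}+2t$ is precisely the proof of that cited theorem in this special case. In effect you have supplied the self-contained argument the paper outsources to the reference; nothing is missing.
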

		Choose $t={\pth{{\Delta\over 4\beta_1\sigma}-\sqrt d}^2\over 2}$. 
		This imples for a large enough value of $\sqrt \delta \cdot \snr\sqrt{\alpha\over d}$ we have
		$${\Delta\over \beta_1\sigma}=
		\sqrt{d\over \alpha}\frac 1{\sqrt{\tau_1}}\pth{\sqrt \delta\cdot \snr\sqrt{\alpha\over d}}^{1/2}
		\geq 8\sqrt d,$$
		and as a consequence we get
		$$t\geq {\Delta^2\over 128\beta_1^2\sigma^2}
		\geq {\Delta^2\over 8\sigma^2}
		=\frac {(\snr)^2}2.$$
		In view of this we continue \eqref{eq:km29} to get
		\begin{align*}
			&\PP\qth{\beta_1\|\theta_g-\theta_h\|^2\leq 2\langle w_i,\Delta_h-\Delta_g \rangle}
			\leq \PP\qth{\|w_i\|\geq {\Delta\over 4\beta_1}}
			\\
			&=\PP\qth{\|w_i\|\geq \sigma(\sqrt d+\sqrt {2t})}
			\leq \exp\sth{-\frac {(\snr)^2}2}.
		\end{align*}
		Combining the above with \eqref{eq:km28}, \eqref{eq:km32} we get
		\begin{align*}
			&\PP\qth{\left.z_i\neq \hat z_i^{(s+1)}\right|\calE}\leq
			k^2 \max_{g,h\in [k]\atop g\neq h}\PP\qth{\left.z_i=g, \hat z_i^{(s+1)}=h \right|\calE}
			\\
			&\leq 2k^2\exp\sth{-{\beta^2\over 2}{(\snr)^2}}.
		\end{align*}
		This implies
		\begin{align*}
			&\EE\qth{\ell(\hat z^{(s)},z)|\calE}
			=\frac 1n\sum_{i=1}^n\PP\qth{\left.z_i\neq z_i^{(s+1)}\right|\calE}
			\\
			&\leq 2k^2\exp\sth{-{\beta^2\over 2}{(\snr)^2}}.
		\end{align*}
		Combining the above with \eqref{eq:Prob-E2} we get
		\begin{align*}
			\EE\qth{\ell(\hat z^{(s)},z)}&\leq 4(k^2+k)n^{-c/4}+16de^{-0.3n}+ 2k^2e^{-{\beta^2\over 2}{(\snr)^2}}.
		\end{align*}
		This implies for any $t>0$ we get using Markov's inequality
		\begin{equation*}
			\begin{aligned}
			&\PP\qth{\ell(\hat z^{(s)},z)\geq t}
			\leq \frac 1t \EE\qth{\ell(\hat z^{(s)},z)}
			\\
			&\leq \frac{1}t \pth{4(k^2+k)n^{-c/4}+16de^{-0.3n}+ 2k^2e^{-{\beta^2\over 2}{(\snr)^2}}}
		\end{aligned}
		\end{equation*}
		If ${\beta^2(\snr)^2}\leq 8{\log n}$, we choose $t=e^{-\pth{\beta^2-{4\over \snr}}{(\snr)^2\over 2}}$. Then we get $\frac 1t\leq n^4$, which implies
		\begin{align*}
			&~\PP\qth{\ell(\hat z^{(s)},z)\geq \exp\sth{-\pth{\beta^2-{4\over \snr}}{(\snr)^2\over 2}}}
			\\
			&\leq 4(k^2+k)n^{-(c/4-2)}+16dn^4e^{-0.3n}+ 2k^2\exp\sth{-{\snr\over 2}}.
		\end{align*}
		Otherwise, if ${\beta^2(\snr)^2} > 8{\log n}$, then choosing $t=\frac 1n$ and noting that $\ell(\hat z^{(s)},z)$ takes values in $\sth{0,\frac 1n,\frac 2n,\dots, 1}$ we get
		\begin{equation*}
			\begin{aligned}
			&\PP\qth{\ell(\hat z^{(s)},z) > 0}
			=\PP\qth{\ell(\hat z^{(s)},z)\geq \frac 1n}
			\nonumber\\
			&\leq 4(k^2+k)n^{-(c/4-1)}+16dne^{-0.3n}+ 2k^2n e^{-{{\beta^2\over 2}(\snr)^2}}
			\nonumber\\
			&\leq 4(k^2+k)n^{-(c/4-1)}+16dne^{-0.3n}+ {2k^2\over n^3}.
		\end{aligned}
		\end{equation*}
		This finishes our proof of \prettyref{thm:main}.
	\end{proof} 
	
	We conclude this section by providing a proof of \prettyref{lmm:Hs-Ls-bound}.
	
	\begin{proof}[Proof of \prettyref{lmm:Hs-Ls-bound}]
		
		For any $g\neq h\in[k]\times [k]$, using the arguments of \eqref{eq:km21} and \eqref{eq:m2} we get
		\begin{align}\label{eq:llone4}
			\indc{z_i=g,\hat z_i^{(s+1)}=h}\leq \indc{\|\theta_g-\hat\theta_h^{(s)}\|^2-\|\theta_g-\hat\theta_g^{(s)}\|^2\leq 2\langle w_i,\hat\theta_h^{(s)}-\hat\theta_g^{(s)}\rangle}.
		\end{align}
		and
		\begin{align}
			\label{eq:llone1}
			&\|\theta_g-\hat\theta_h^{(s)}\|^2-\|\theta_g-\hat\theta_g^{(s)}\|^2
			\nonumber\\
			&\geq (1-2\Lambda_s)\|\theta_g-\theta_h\|^2\geq 2\epsilon_0\|\theta_g-\theta_h\|^2.
		\end{align}
		Denote by $\Delta_h=\hat\theta_h^{(s)}-\theta_h$ for $h\in[k]$. In view of the last inequality, continuing \eqref{eq:llone4} we get
		\begin{equation*}
			\begin{aligned}
			&\indc{z_i=g,\hat z_i^{(s+1)}=h}
			\\
			&\leq \indc{\epsilon_0\|\theta_g-\theta_h\|^2\leq \langle w_i,\theta_h-\theta_g+\Delta_h-\Delta_g\rangle}
			\\
			&\leq \indc{{\epsilon_0\over 2}\|\theta_g-\theta_h\|^2\leq \langle w_i,\theta_h-\theta_g\rangle}
			+\indc{{\epsilon_0\over 2}\|\theta_g-\theta_h\|^2\leq \langle w_i,\Delta_h-\Delta_g\rangle}.
			\\
			&\leq \indc{{\epsilon_0\over 2}\|\theta_g-\theta_h\|^2\leq \langle w_i,\theta_h-\theta_g\rangle}+\frac 4{\epsilon_0^2\Delta^4}(w_i^\sfT(\Delta_h-\Delta_g))^2,
		\end{aligned}
		\end{equation*}
		where the last inequality follows as
		\begin{equation*}
		\begin{aligned}
			&\indc{{\epsilon_0\over 2}\|\theta_g-\theta_h\|^2\leq \langle w_i,\Delta_h-\Delta_g\rangle}
		\\
		&\leq {4\pth{w_i^\sfT(\Delta_h-\Delta_g)}^2\over \epsilon_0^2\|\theta_g-\theta_h\|^4}
		\leq \frac 4{\epsilon_0^2\Delta^4}(w_i^\sfT(\Delta_h-\Delta_g))^2.
		\end{aligned}
		\end{equation*}
		Summing $\indc{z_i=g,\hat z_i^{(s+1)}=h}$ over $\sth{i\in T_g^*}$
		\begin{align}
			\label{eq:llone2}
			n_{gh}^{(s+1)}&\leq \sum_{i\in T_g^*} \indc{{\epsilon_0\over 2}\|\theta_g-\theta_h\|^2\leq \langle w_i,\theta_h-\theta_g\rangle}
			\nonumber\\
			&+ \frac 4{\epsilon_0^2\Delta^4}\sum_{i\in T_g^*} (w_i^\sfT(\Delta_h-\Delta_g))^2
		\end{align}
		In view of \prettyref{lmm:Chernoff-binom}, as $n_g^* \geq {n\alpha}$ we get on the event $\econ_{\epsilon_0}$ 
		\begin{align}
			\label{eq:llone3}
			\sum_{i\in T_g^*} \indc{{\epsilon_0\over 2}\|\theta_g-\theta_h\|^2\leq \langle w_i, \theta_h-\theta_g \rangle}
			&\leq \frac{10n_g^*\sigma^2}{\epsilon_0^2\Delta^2}.
		\end{align}
		Next we note that as $\|\Delta_g-\Delta_h\|^2\leq 4\Lambda_s^2\Delta^2$, we have
		\begin{equation*}
			\begin{aligned}
				&\sum_{i\in T_g^*}\pth{w_i^\sfT(\Delta_h-\Delta_g)}^2
			\nonumber\\
			&=\sum_{i\in T_g^*} (\Delta_h-\Delta_g) \pth{\sum_{i\in T_g^*}w_iw_i^\sfT}(\Delta_h-\Delta_g)
			\nonumber\\
			&\leq \lambda_{\max}\pth{\sum_{i\in T_g^*}w_iw_i^\sfT} \|\Delta_g-\Delta_h\|^2
			\nonumber\\
			&\leq 4\Lambda_s^2\Delta^2\lambda_{\max}\pth{\sum_{i\in T_g^*}w_iw_i^\sfT}. 
			\end{aligned}
		\end{equation*}
		This implies on the event $\eEigen$ as in \prettyref{lmm:eigen_bound}
		\begin{align*}
			\sum_{i\in T_g^*}\pth{w_i^\sfT(\Delta_h-\Delta_g)}^2
			&\leq
			24\Lambda_s^2\Delta^2\pth{n_g^*+ d}.
		\end{align*}
		In view of \eqref{eq:llone2} and \eqref{eq:llone3} we get that on the set $\econ_{\epsilon_0}$ for all $g\neq h\in [k]$
		\begin{equation}
			\label{eq:llone5}
			n_{gh}^{(s+1)}
			\leq 
			\frac{10n_g^*\sigma^2}{\epsilon_0^2\Delta^2}
			+{96\Lambda_s^2\sigma^2\over \epsilon_0^2\Delta^2}\pth{{n_g^*}+  d}.
		\end{equation}
		Using the last display and noting that $k\leq \min\sth{\frac 1{\alpha}, n },n_g^*\geq n\alpha$ and $\Lambda_s<\frac 12$ we get
		\begin{align}\label{eq:km23}
			{\sum_{h\in[k]\atop h\neq g}n_{gh}^{(s+1)}\over n_g^*}
			&\leq
			\frac{10\sigma^2}{\epsilon_0^2\Delta^2\alpha}
			+{96\Lambda_s^2\sigma^2\over \epsilon_0^2\Delta^2\alpha}\pth{1+ {dk\over n}}
			\nonumber\\
			&\leq {17\over 2\epsilon_0^2 (\snr\sqrt{\alpha\over 1+{dk/n}})^2},
		\end{align}
		where we note that the last expression is defined as $\tau$ in the lemma statement. Define the rightmost term in the above display as $\tau$. Using $n_g^{*}=\sum_{h\in [k]}n_{gh}^{(s+1)}$ for all $s\geq 0$, we get 
		\begin{align}\label{eq:km24}
			{n_{gg}^{(s+1)}\over n_g^*}\geq 1-\tau.
		\end{align}	 
		Next we switch $g,h$ in \eqref{eq:llone5} and sum over $h\in [k], h\neq g$. We get
		\begin{align*}
			\sum_{h\in[k]\atop h\neq g}n_{hg}^{(s+1)}
			\leq 
			\frac{10n\sigma^2}{\epsilon_0^2\Delta^2}
			+{96\Lambda_s^2\sigma^2\over \epsilon_0^2\Delta^2}\pth{n+  dk}
			\leq n\alpha\tau.
		\end{align*}
		Using the above and noticing that in addition to the points in $\cup_{h\in[k]}\sth{Y_i:i\in T_{h}^*\cap T_g^{(s+1)}}$, $\sth{Y_i:i\in T_g^{(s+1)}}$ can at most have $n\alpha(1-\delta)$ many extra points, accounting for the outliers, we get
		\begin{align*}
			&{n_{gg}^{(s+1)}\over n_g^{(s+1)}}\geq {n_{gg}^{(s+1)}\over n_{gg}^{(s+1)}+n\alpha\tau+n\alpha(1-\delta)}
			\nonumber\\
			&\geq \frac 1{1+{n_g^*(1-\delta+\tau)\over n_{gg}^{(s+1)}}}
			\geq \frac 1{1+{1-\delta+\tau\over 1-\tau}}
			=\frac 12 + {\delta-2\tau\over 2(2-\delta)}.
		\end{align*}
		Combining the last display with \eqref{eq:km24} we get
		\begin{align*}
			&H_{s+1}=\min_{g\in [k]}\sth{\min\sth{{n_{gg}^{(s+1)}\over n_g^*},{n_{gg}^{(s+1)}\over n_{g}^{(s+1)}}}}
			\nonumber\\
			&\geq \frac 12+\min\sth{{\delta-2\tau\over 2(2-\delta)},\frac 12-\tau},
		\end{align*}
		as required.
		
		We present below the proof of \prettyref{lmm:Hs-Ls-bound}(ii). Recall the definitions in \eqref{eq:cluster-def} and let $\hat \theta_g^{(s)}$ be the location estimate in iteration $s$, as defined before, and $\tilde \theta_g^{(s)}$ be the coordinatewise median of data points corresponding to $T_{g}^{(s)}\cap T_{g}^*$
		\begin{align*}
			\begin{gathered}
				\hat \theta_g^{(s)}
			=\med(\sth{Y_i:i\in T_g^{(s)}}),\\
			\tilde \theta_g^{(s)}
			=\med(\sth{Y_i:i\in T_{g}^{(s)}\cap T_{g}^*}).
			\end{gathered}
		\end{align*} 
		For $j=1,\dots,d$, let $V_j$ denote the set comprising of the $j^{\rm th}$ coordinates of the vectors in $T_{g}^{(s)}\cap T_{g}^*$.
		We will first show the deterministic result: given any $g\in[k]$ 
		\begin{align}
			\label{eq:km16}
			\|\tilde \theta_{g}^{(s)}-\hat \theta_g^{(s)}\|_2^2\leq \sum_{j=1}^d\pth{{ V_j^{( \floor{\gamma_0 {n_{gg}^{(s)}}})} } - V_j^{(\ceil{{1\over 1+2\gamma_0}n_{gg}^{(s)}})}}^2.
		\end{align}
		To prove this, note that
		\begin{align*}
			n_{gg}^{(s)}\geq H_s n_g^{(s)}
			\geq \pth{\frac 12+\gamma_0}n_g^{(s)}.
		\end{align*}
		For each $j=1,\dots,d,$ let $U_j$ denote the set comprising of the $j^{\rm th}$ coordinates of the vectors in $T_{g}^{(s)}$. Then $V_j$ is a subset of $U_j$. Using $\gamma_0>{10\over n\alpha}$ and $n_g^{(s)}\geq n_{gg}^{(s)}\geq \frac 12 n_g^*\geq \frac {n\alpha}2
		$ we have
		\begin{align}\label{eq:km15}
			\begin{gathered}
				|U_j| - |V_j| = n_g^{(s)}-n_{gg}^{(s)} < \pth{\frac12 -\gamma_0} n_g^{(s)}
				\leq {n_g^{(s)}\over 2}-5,\\
				n_{gg}^{(s)}>\pth{\frac 12+\gamma_0}n_g^{(s)}
				\geq {n_g^{(s)}\over 2}+{\gamma_0}n_{g}^{(s)}
				\geq {n_g^{(s)}\over 2}+5.
			\end{gathered}
		\end{align}
		In view of this we use \prettyref{lmm:ord-stat_after_addn} to control the difference between the order statistics of $U_j$ and $V_j$. We have
		\begin{align}\label{eq:km18}
			&V_j^{(\ceil{ {n_g^{(s)}/2}})}
			\stepa{\leq} U_j^{(\ceil{ {n_g^{(s)}/2}})}
			= \med(U_j)
			\nonumber\\
			&\stepb{\leq} V_j^{(\ceil{n_g^{(s)}/2}-n_g^{(s)}+n_{gg}^{(s)})}
			\stepc{\leq}
			V_j^{(\floor{n_{gg}^{(s)}-n_g^{(s)}/2})}
			\stepd{\leq} V_j^{(\floor{\gamma_0 n_{g}^{(s)}})}
		\end{align}
		where (a) and (b) follow by using \prettyref{lmm:ord-stat_after_addn} with $$X=V_j,\tilde X=U_j,m=n_{gg}^{(s)},\ell=n_{g}^{(s)}-n_{gg}^{(s)}, t=\ceil{n_g^{(s)}/2},$$ (c) follows by using $\ceil{x+y}\geq \ceil{x}+\floor{y}$ for any  $x,y>0$, and (d) follows by using \eqref{eq:km15}.
		Using ${n_{gg}^{(s)}\over 2}> \pth{\frac 12+\gamma_0}{n_g^{(s)}\over 2}\geq \gamma_0 n_{g}^{(s)}$ from \eqref{eq:km15} we also get
		\begin{align*}
			V_j^{(\ceil{ {n_g^{(s)}/2}})}
			\leq V_j^{(\ceil{n_{gg}^{(s)}/2})}=\med(V_j)
			\leq  V_j^{(\floor{\gamma_0 n_{g}^{(s)}})}.
		\end{align*}
		Combining the above with \eqref{eq:km18} we get
		\begin{align*}
			\abs{\med(V_j)-\med(U_j)}
			&\leq \abs{V_j^{(\floor{\gamma_0 n_g^{(s)}})}-V_j^{(\ceil{ {n_g^{(s)}/2}})}}.
		\end{align*}
		Hence we have
		\begin{align}\label{eq:km19}
			&\|\hat \theta_g^{(s)}-\tilde \theta_{g}^{(s)}\|_2^2
			=\sum_{j=1}^d\abs{\med(V_j)-\med(U_j)}^2
			\nonumber\\
			&
			\leq \sum_{j=1}^d\pth{V_j^{( \floor{\gamma_0 {n_g^{(s)}}})} - V_j^{(\ceil{ n_g^{(s)}/2})}}^2.
		\end{align}
		Using $n_{g}^{(s)}\leq {2\over 1+2\gamma_0} n_{gg}^{(s)}$ from \eqref{eq:km15} and $n_{g}^{(s)}\geq n_{gg}^{(s)}$ we get $$V_j^{(\ceil{{1\over 1+2\gamma_0}n_{gg}^{(s)}})}\leq V_j^{(\ceil{ n_g^{(s)}/2})}\leq V_j^{( \floor{\gamma_0 {n_g^{(s)}}})}
		\leq V_j^{( \floor{\gamma_0 {n_{gg}^{(s)}}})}.$$
		In view of the previous display this establishes \eqref{eq:km16}.
		
		Next, we provide a high probability bound on the right hand term in the above equation using \prettyref{lmm:order-concentration}. As $n_{gg}^{(s)}\geq \frac 12n_g^*\geq {n\alpha\over 2}$, using \prettyref{lmm:order-concentration} twice with $p_0=\frac {2\gamma_0}{1+2\gamma_0}$ and $p_0=\gamma_0$ respectively, we get that for each $j\in[d]$, with probability $1-4e^{-0.3 n}$
		\begin{align*}
				&V_j^{(\ceil{{1\over 1+2\gamma_0}n_{gg}^{(s)}})}
				\geq -\sigma\sqrt{\frac {2(1+2\gamma_0)}{\gamma_0}\pth{\frac 2\alpha+1}}\nonumber\\
				&
				\geq -\sigma\sqrt{\frac 4{\gamma_0}\pth{\frac 2\alpha+1}}
				\geq -\sigma\sqrt{\frac {12}{\gamma_0\alpha}},
			\end{align*}
		and
		\begin{align*}
				V_j^{( \floor{\gamma_0 {n_{gg}^{(s)}}})}\leq \sigma\sqrt{\frac 4{\gamma_0}\pth{\frac 2\alpha+1}}
				\leq \sigma\sqrt{\frac {12}{\gamma_0\alpha}}.
		\end{align*}
		In view of the above, using \eqref{eq:km19} and a union bound over $j\in[d]$, we get that with probability at least $1-4de^{-0.3 n}$
		\begin{align}\label{eq:km9}
			\|\tilde \theta_{g}^{(s)}-\hat \theta_g^{(s)}\|_2^2
			\leq {48\sigma^2 d\over \gamma_0\alpha} .
		\end{align}
		Next we note that
		\begin{align*}
			\tilde \theta_g^{(s)}-\theta_g
			=\med\sth{w_i:i\in T_{gg}^{(s)}}.
		\end{align*} 
		In view of the above using \prettyref{lmm:order-concentration} with $p_0=\frac 12$ and $n_{gg}^{(s)}\geq {n\alpha\over 2}$ we get that each coordinate of $\tilde \theta_g^{(s)}-\theta_g$ lies in $\pth{-\sigma\sqrt{\frac {24}\alpha },\sigma\sqrt{\frac {24}\alpha}}$ with probability at least $1-2e^{-0.3n}$. Repeating the argument in all the coordinates we have with probability at least $1-2de^{-0.3n}$
		\begin{align*}
			\|\tilde \theta_g^{(s)}-\theta_g\|_2^2
			\leq {96\sigma^2 d\over\alpha}
			\leq {48\sigma^2 d\over\gamma_0\alpha}.
		\end{align*}
		Combining with \eqref{eq:km9} and using $\gamma_0<\frac 12$ we get with probability at least $1-6de^{-0.3n}$
		\begin{align*}
			&\|\hat \theta_g^{(s)}-\theta_g\|_2^2
			\leq 2(\|\hat \theta_g^{(s)}-\hat \theta_g^{(s)}\|_2^2+\|\hat \theta_g^{(s)}-\theta_g\|_2^2)\nonumber\\
			&\leq 2\pth{{48\sigma^2 d\over \gamma_0\alpha}+{48\sigma^2 d\over \alpha}}
			\leq {192\sigma^2 d\over \gamma_0\alpha}.
		\end{align*}
		Dividing both sides in the above display with $\Delta^2$ and using a union bound over $g\in[k]$ we get the desired result.
	\end{proof}

	\section{Proof of \prettyref{thm:centroid}}
	
	\label{app:proof-centroid}
	For this section, let us define the ratio of total outliers with respect to individual clusters as
	\begin{align}
		\beta_g^\out = {n^{\out}\over n_g^*},\quad g=1,2,\dots,k.
	\end{align}
	Note that in order to identify all the clusters accurately we require $\max_g\beta_g^\out<1$; otherwise, given any algorithm, an adversary can always create a constellation involving only the outliers that will force the algorithm to detect it as an individual cluster.
	Using the definition
	\begin{align*}
		A_s=\frac 1n \sum_{i=1}^n\indc{\hat z_i^{(s)}\neq z_i}
	\end{align*} 
	we get for any $g\in [k]$ and $s\geq 3$
	\begin{align*}
		\max\sth{\sum_{h\in[k]\atop h\ne g}n_{hg}^{(s)},\sum_{h\in[k]\atop h\ne g}n_{gh}^{(s)}}
		\leq \sum_{i=1}^n\indc{\hat z_i^{(s)}\neq z_i}
		\eqdef nA_s.
	\end{align*}
	Note that using $n\leq {n_g^*\over \alpha}$ and $n_{gg}^{(s)}\geq n_{g}^*-nA_s\geq n_g^*(1-A_s/\alpha)$, using the last display we can write
	\begin{align*}
		&{n_{gg}^{(s)}\over n_g^{(s)}}\geq {n_{gg}^{(s)}\over n_{gg}^{(s)}+\sum_{h\in[k]\atop h\ne g}n_{hg}^{(s)}+n^{\out}}\nonumber\\
		&
		\geq \frac 1{1+{n_g^*(\frac 1\alpha A_s+\beta_g^{\out})\over n_{gg}^{(s+1)}}}
		\geq \frac 1{1+{\beta_g^{\out}+ A_s/\alpha\over 1-A_s/\alpha}}
		=\frac 12 + {1-\beta_g^{\out}-\frac 2\alpha A_s\over 2(1+{\beta_g^{\out}})},
	\end{align*}
	which implies
	\begin{align}\label{eq:km30}
		&\ceil{ {n_g^{(s)}/2}}-n_g^{(s)}+n_{gg}^{(s)}\geq n_{gg}^{(s)}-{n_g^{(s)}\over 2}\nonumber\\
		&
		\geq \pth{1-\beta_g^{\out}-\frac 2\alpha A_s\over 1+\beta_g^{\out}}{n_g^{(s)}\over 2}
		= \pth{\frac 12-{\beta_g^{\out}+A_s/\alpha \over 1+\beta_g^{\out}}}{n_g^{(s)}}
	\end{align}
	As $n_g^*\geq n\alpha$, the above implies for all $g\in [k]$
	\begin{gather}
		n_g^{(s)}\geq 	n_{gg}^{(s)}=n_g^*-\sum_{h\in[k]\atop h\ne g}n_{gh}^{(s)}
		\geq n_g^*-nA_s \geq n_g^*\pth{1-\frac 1\alpha A_s}
		\label{eq:km31}
	\end{gather}
	Fix $g\in [k]$. Let $U_j$ denote the set of real numbers consisting of the $j^{\text{th}}$ coordinates of $\sth{Y_i:i\in T_g^{(s)}}$, $V_j$ denote the set of real numbers consisting of the $j^{\text{th}}$ coordinates of $\sth{Y_i:i\in T_g^*\cap T_g^{(s)}}$, and $W_j$ denote the set of real numbers consisting of the $j^{\text{th}}$ coordinates of $\sth{Y_i:i\in T_g^*}$. Then we get
	\begin{align*}
		U_j^{(\ceil{ {n_g^{(s)}/2}})}
		&\stepa{\leq} V_j^{(\ceil{n_g^{(s)}/2}-n_g^{(s)}+n_{gg}^{(s)})}
		\nonumber\\
		&\stepb{\leq }
		V_j^{(\floor{\pth{1-\beta_g^{\out}-\frac 2\alpha A_s\over 1+\beta_g^{\out}}{n_g^{(s)}\over 2}})}
		\nonumber\\
		&\stepc{\leq} W_j^{(\floor{\pth{1-\beta_g^{\out}-\frac 2\alpha A_s\over 1+\beta_g^{\out}}{n_g^{(s)}\over 2}})}\nonumber\\
		&
		\stepd{\leq} W_j^{(\floor{{n_g^{*}}\pth{\frac 12-{\beta_g^{\out}+A_s/\alpha \over 1+\beta_g^{\out}}}\pth{1-\frac 1\alpha A_s}})}\nonumber\\
		&
		\leq W_j^{(\floor{{n_g^{*}}\pth{\frac 12-{\beta_g^{\out}\over 1+\beta_g^{\out}}-\frac 3{2\alpha}A_s}})},
	\end{align*}
	where
	\begin{itemize}
		\item (a) follows using \prettyref{lmm:ord-stat_after_addn} with
		$
		X=V_j,\ \tilde X=U_j,\ \ell=n_g^{(s)}-n_{gg}^{(s)},\ t=\ceil{n_g^{(s)}\over 2},
		$
		\item (b) follows using \eqref{eq:km30} and $\ceil{x}-y\geq \floor{x-y}$ for $x,y\geq 0$
		
		\item (c) follows using \prettyref{lmm:ord-stat_after_addn} with 
		$X=V_j,\ \tilde X=W_j,\ \ell=n_g^*-n_{gg}^{(s)},\ t=\floor{\pth{\frac 12-{\beta_g^{\out}+ A_s/\alpha\over 1+\beta_g^{\out}}}{n_g^{(s)}}},$
		
		\item (d) follows using \eqref{eq:km31}.
	\end{itemize}
	Similarly, we also have
	\begin{align*}
		U_j^{(\ceil{ {n_g^{(s)}/2}})}
		&\stepa{\geq} V_j^{(\ceil{n_g^{(s)}/2})}
		\nonumber\\
		&\stepb{\geq} 
		V_j^{(\ceil{{n_g^*\over 2}\cdot \frac 1{1-{2A_s/\alpha}}})}
		\stepc{\geq} 
		V_j^{(\ceil{{n_g^*}(\frac 12+\frac 2\alpha A_s)})}
		\nonumber\\
		&\stepd{\geq} W_j^{(\ceil{{n_g^*}(\frac 12+\frac 2\alpha A_s)}+\sum_{h=[k]\atop h\neq k}n_{gh})}
		\nonumber\\
		&\stepe{\geq} W_j^{(\ceil{{n_g^*}(\frac 12+\frac 3\alpha A_s)})}.
	\end{align*}
	\begin{itemize}
		\item (a) follows using \prettyref{lmm:ord-stat_after_addn} with
		$
		X=V_j,\ \tilde X=U_j,\ \ell=n_g^{(s)}-n_{gg}^{(s)},\ t=\ceil{n_g^{(s)}\over 2},
		$
		
		\item (b) follows using \eqref{eq:km31} and (c) follows using $(1-x)^{-1}\leq 1+2x$ for $x=\frac 2\alpha A_s\leq \frac 12$.
		
		\item (c) used \prettyref{lmm:ord-stat_after_addn} with 
		$X=V_j,\ \tilde X=W_j,\ \ell=\sum_{h=[k]\atop h\neq k}n_{gh},\ t=\ceil{{n_g^*\over 2}(1+\frac 4\alpha A_s)}+\sum_{h=[k]\atop h\neq k}n_{gh},$

		\item (d) used that for any $x>0$ and positive integer $y$, $\ceil{x}+y = \ceil{x+y}$, and $\sum_{h=[k]\atop h\neq k}n_{gh}\leq nA_s$.
	\end{itemize}
	Then \prettyref{thm:main} implies that given any $\tau>0$, there exists $C:=C(\tau)$ such that whenever $\sqrt \delta\cdot \snr\sqrt{\alpha/d}\geq C(\tau)$, we have $\frac 6\alpha A_s\leq e^{-{\Delta^2\over (8+\tau)\sigma^2}}$. Using this we combine the last two displays to get
	\begin{align*}
		&W_j^{(\ceil{{n_g^*\pth{\frac 12+e^{-{\Delta^2\over (8+\tau)\sigma^2}}}}})}
		\leq U_j^{(\floor{{n_g^{(s)}/2}})}\nonumber\\
		&\leq W_j^{(\floor{{n_g^*\pth{\frac 12-{\beta_g^{\out}\over 1+\beta_g^{\out}}-e^{-{\Delta^2\over (8+\tau)\sigma^2}}}}})}.
	\end{align*}
	Then using the result \prettyref{lmm:order-concentration-smallx} on the concentration of the order statistics we conclude our proof.

	
	\section{Technical results}\label{app:technical}

	\begin{lemma}\label{lmm:order-concentration}
		Let $\sth{Z_1,\dots,Z_n}$ be a set of independent real-valued $\subG(\sigma^2)$ random variables. Fix $p_0\in(0,\frac 12]$. Then there is an event $\eord_{p_0}$ with $\PP\qth{\eord_{p_0}}\geq 1-2e^{-0.3 n}$ on which for all $p\in [p_0,\frac 12]$ and all sets $V\subseteq \sth{Z_1,\dots,Z_n}$ with $|V|\geq \max\sth{{n\alpha},\frac 2{p_0}}$
		\begin{align*}
			-\sigma\sqrt{\frac 4{p_0}\pth{\frac 1\alpha+1}}
			&\leq V^{(\ceil{(1-p)|V|})}\leq V^{(\floor{p|V|})}\nonumber\\
			&\leq \sigma\sqrt{\frac 4{p_0}\pth{\frac 1\alpha+1}},
		\end{align*}
		where for a set of $n$ real numbers $\{v_1,\dots,v_n\}$ its order statistics are given by $v^{(1)}\geq \dots\geq v^{(n)}$.
	\end{lemma}
	
	\begin{proof}
		First we analyze for a fixed set $V$ and then take a union bound over all possible choices of $V$, which at most $2^n$ many. For an ease of notation let $|V|=m$. Fix $t=\sigma\sqrt{\frac 4{p_0}\pth{\frac 1\alpha+1}}$ and let $B_Z=\indc{Z\geq t}$ for all $Z\in V$.  This implies
		\begin{align}
			&\PP\qth{V^{(\floor{pm})}\geq t}
			\leq \PP\qth{V^{(\floor{p_0m})}\geq t}
			\nonumber\\
			&= \PP\qth{\sum_{Z\in V} B_Z\geq \floor{mp_0}}
			\leq \PP\qth{\sum_{Z\in V} B_Z\geq mp_0-1}
			.\label{eq:km17}
		\end{align} 
		Let $q=\sup_{z\in\subG(\sigma^2)}\PP\qth{z\geq t}$, where the supremum is taken over all one-dimensional sub-Gaussian random variables. Using tail bounds on sub-Gaussian random variables \cite[Section 2.1.2]{wainwright2019high} and $e^{-x}\leq \frac 1x$ for $x>0$ we get
		\begin{align*}
			q {\leq} e^{-{t^2\over 2\sigma^2}}
			\leq e^{-\frac 2{p_0}\pth{\frac 1\alpha+1}}
			{\leq} {p_0\over 2\pth{\frac 1\alpha +1}}.
		\end{align*}
		Let $S\sim \Binom(m,q)$. Then using stochastic dominance of $S$ over $\sum_{Z\in V} B_Z$ we get that 
		$$
		\PP\qth{\sum_{Z\in V} B_Z\geq mp_0-1}
		\leq \PP\qth{S\geq mp_0-1}.
		$$
		Next we use the Chernoff inequality given in \eqref{eq:chernoff} with $q\leq {p_0\over 2}\leq p_0-\frac 1m$ and $a=p_0-\frac 1m$ we get
		\begin{align}\label{eq:km6}
			&\PP\qth{V^{(\floor{p_0m})}\geq t}
			\leq \exp\pth{-mh_{q}\pth{p_0-\frac 1m}}
			\nonumber\\
			&\leq \exp\Biggl(-m\Biggl\{(p_0-\frac 1m)\log{p_0-\frac 1m\over q}
			\nonumber\\
			&+(1-p_0+\frac 1m)\log{1-p_0+\frac 1m\over 1-q}\Biggr\}\Biggr).
		\end{align}
		Note the following:
		\begin{itemize}
			\item To analyze the first summand in the above exponent we use $q\leq {p_0\over 2\pth{\frac 1\alpha +1}}$, $p_0-\frac 1m\geq {p_0\over 2}$, and the fact $x\log x\geq -\frac 12$ for all $|x|<1$ to get
			\begin{align*}
			&(p_0-\frac 1m)\log{p_0-\frac 1m\over q}
			\nonumber\\
			&\geq \pth{p_0-\frac 1m}\pth{\log\pth{p_0-\frac 1m}+ \frac 2{p_0}\pth{\frac 1\alpha+1}}
			\nonumber\\
			&\geq -\frac 12 + \pth{\frac 1\alpha +1}
			\geq \frac 1\alpha+\frac 12.
			\end{align*}
			\item For the second term using $x\log x\geq -\frac 12$ for $|x|<1$ we have
			$$
			\begin{aligned}
				&(1-p_0+\frac 1m)\log{1-p_0+\frac 1m\over 1-q}
			\\
			&\geq (1-p_0+\frac 1m)\log(1-p_0+\frac 1m)
			\geq -\frac 12.
			\end{aligned}
			$$
		\end{itemize}
		Combining these with \eqref{eq:km6} we get for all $p\geq p_0$
		\begin{align*}
			\PP\qth{V^{(\floor{pm})}\geq \sigma\sqrt{\frac 4{p_0}\pth{\frac 1\alpha+1}}}
			\leq \exp\pth{-\frac m\alpha}\leq e^{-n}.
		\end{align*}
		As the total possible choices of $V$ is at most $2^n$, we use union bound to conclude that with probability at least $1-2^ne^{-n}\geq 1-e^{-0.3n}$ we get for all $V\subseteq [n]$ with $|V|\geq {n\alpha}$
		\begin{align*}
			\PP\qth{V^{(\floor{p|V|})}\leq \sigma\sqrt{\frac 4{p_0}\pth{\frac 1\alpha+1}}}.
		\end{align*}
		The first inequality in the lemma statement can be obtained in a similar fashion by considering the random variables $-Z_1,\dots,-Z_m$.
	\end{proof}

	\begin{lemma}
		\label{lmm:Chernoff-binom}
		Fix $\epsilon_0 >0$ and let $n\alpha\geq c\log n$. Then there is an event $\econ_{\epsilon_0}$ with $\PP\qth{\econ_{\epsilon_0}}\geq 1-kn^{-c/4}$ on which for any $h\in [k]$
		$$\sum_{i\in T_g^*}\indc{\epsilon_0^2\|\theta_g-\theta_h\|^2\leq \langle w_i,\theta_h-\theta_g\rangle}
		\leq \frac {5n_g^*}{2\epsilon_0^4(\Delta/\sigma)^2}, \quad g\in [k], g\neq h.$$
	\end{lemma}
	
	\begin{proof}
		Define $x=\sup_{w\in \subG(\sigma^2)}\PP\qth{\epsilon_0^2\|\theta_g-\theta_h\|^2\leq \langle w,\theta_h-\theta_g\rangle}$. Note that $x$ satisfies
		\begin{align*}
			x\leq e^{-{\epsilon_0^4\|\theta_g-\theta_h\|^2\over 2\sigma^2}}.
		\end{align*}
		Let $S_{g}$ be a random variable distributed as $\Binom(n_g^*,x)$. Then using stochastic dominance we get 
		$$\PP\qth{\sum_{i\in T_g^*}\indc{\epsilon_0^2\|\theta_g-\theta_h\|^2\leq \langle w_i,\theta_h-\theta_g\rangle}\geq y} \leq \PP\qth{S_{g}\geq y},\ y\geq 0,$$
		and hence it is enough to analyze the tail probabilities of $S_{g}$. As $\log (1/x)\geq {\epsilon_0^4(\Delta/\sigma)^2\over 2}$, it suffices to show that
		\begin{align}
			\label{eq:km8}
			\PP\qth{S_{g} \geq \frac {5n_g^*}{4\log(1/x)}}\leq n^{-c/4} \text{ for any } g\in [k].
		\end{align}
		Then using a union bound over $g\in [k]$ we get the desired result. We continue to analyze \eqref{eq:km8} using the Chernoff's inequality for the Binomial random variable $S_g$ from \eqref{eq:chernoff} with $a={\frac 5{4\log(1/x)}}$ and $m=n_g^*$. We get $x=e^{-4/(5a)}$, and using $\log(1/x)<\frac 1{x}$ we get $y>x$. Using $y\log y\geq -0.5$ for $y\in (0,1)$ we get
		\begin{align*}
			&~\PP\qth{S_g \geq \frac {5n_g^*}{4\log(1/x)}}\\
			&\leq \exp\pth{-mh_x(a)}
			\nonumber\\
			&\leq \exp\pth{-m\pth{a\log{a\over x}+(1-a)\log{1-a\over 1-x}}}
			\nonumber\\
			&\leq \exp\pth{-m\sth{a\log{a\over e^{-{5/ (4a)}}}+(1-a)\log(1-a)}}
			\nonumber\\
			&=  \exp\pth{-m\sth{a\log a+(1-a)\log(1-a)+\frac 54}}
			\leq e^{-n_g^*/4}.
		\end{align*}
		As $n_g^*\geq n\alpha\geq c\log n$, we get \eqref{eq:km8}. 
	\end{proof}

	\begin{lemma}
		\label{lmm:eigen_bound}
		For any symmetric matrix $A$ let $\lambda_{\max}(A)$ denote its maximum eigen value. Let $n\alpha\geq c\log n$. Then there exists an event $\eEigen$ with $\PP\qth{\eEigen}\geq 1-kn^{-c/2}$ on which
		$\lambda_{\max}\pth{\sum_{i\in T_g^*} w_iw_i^{\sfT}}\leq  6\sigma^2(n_g^*+d)$ for all $g\in [k]$.
	\end{lemma}
	
	\begin{proof}
		From \cite[Lemma A.2]{lu2016statistical} we note that
		given any $g\in[k]$, the set $\sth{w_i: i \text{ is such that } i\in T_g^*}$ of $\subG(\sigma^2)$ random vectors satisfy
		\begin{align*}
			\PP\qth{\lambda_{\max}\pth{\sum_{i\in T_g^*}w_iw_i^\sfT}\leq 6\sigma^2(n_g^*+d)}\geq 1-e^{-0.5 n_g^*}.
		\end{align*}
		Using a union bound for $g\in [k]$ and the assumption $n_g^*\geq n\alpha\geq c\log n$ we infer that 
		\begin{align*}
			&\PP\qth{\lambda_{\max}\pth{\sum_{i\in T_g^*}w_iw_i^\sfT}\leq 6\sigma^2(n_g^*+d) \text{ for all } g\in [k]}
			\\ &
			\geq 1-ke^{-0.5 n_g^*}
			\geq 1-kn^{-c/2}.
		\end{align*}
	\end{proof}
	
	\begin{lemma}[Order statistics after addition]\label{lmm:ord-stat_after_addn}
		Let $X=\sth{X_1,\dots ,X_m}$ be any set of $m$ real numbers. Suppose that we add $\ell<m$ many new entries to the set and call the new set $\tilde X$. Then we have 
		$$
		X^{(t)}\leq \tilde X^{(t)}\leq X^{(t-\ell)}, \quad t\in \naturals
		$$
		with the notation that negative order statistics are defined to be infinity and for a set of size $m$ all $a(\geq m+1)$-th order statistics are defined to be $-\infty$.
	\end{lemma}
	
	\begin{proof}
		We first prove the statement when $\ell=1$ and $t$ is any positive integer, i.e., when we add only one entry. Suppose that the new entry is $x$. Then we have the following for all $1<t\leq m$:
		\begin{itemize}
			\item $\tilde X^{(t)}=X^{(t)}$ if $x\leq X^{(t)}$
			\item $\tilde X^{(t)}=\min\{X^{(t-1)},x\}$ if $x> X^{(t)}$.
		\end{itemize}
		This proves our case.

		Next, we use induction to prove the statement for $\ell\geq 2$. Assume that the lemma statement holds for $\ell-1$, i.e., if $V$ is the updated set after adding $\ell-1$ many entries, then 
		$$
		X^{(t-1)}\leq V^{(t-1)}\leq X^{(t-\ell)},\quad
		X^{(t)}\leq V^{(t)}\leq X^{(t-\ell+1)}.
		$$
		Using the base case $\ell=1$ we get
		\begin{align*}
			V^{(t)}\leq \tilde X^{(t)}\leq V^{(t-1)}.
		\end{align*}
		Combining the last two displays we get the result.
	\end{proof}

	\begin{lemma}\label{lmm:order-concentration-smallx}
		Let $Z=\sth{Z_1,\dots,Z_m}$ be a set of independent real random variables, $q\in(0,\frac 12]$ and $x_0$ be real numbers that satisfy
		$$\PP\qth{Z_j\geq x_0}\leq q,\quad \PP\qth{Z_j\leq -x_0}\leq q,\quad j=1,\dots,m.$$
		Then given any $p_0\in (q+\frac 1m,\frac 12]$
		\begin{align*}
			\PP\qth{-x_0< Z^{(\ceil{(1-p_0)m})}\leq Z^{(\floor{p_0m})}< x_0}\geq 1-2e^{-2m(p_0-q-\frac 1m)^2}.
		\end{align*}
	\end{lemma}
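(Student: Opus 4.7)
The plan is to convert the event about order statistics into a standard concentration statement for a sum of independent Bernoulli random variables, and then apply Hoeffding's inequality.

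First, I will translate the two-sided event using the convention $Z^{(1)} \geq Z^{(2)} \geq \cdots \geq Z^{(m)}$ adopted earlier in the excerpt. Let $N_+ = \sum_{j=1}^m \indc{Z_j \geq x_0}$ and $N_- = \sum_{j=1}^m \indc{Z_j \leq -x_0}$. Since $Z^{(t)}$ is the $t$-th largest value, we have the equivalences
\begin{align*}
Z^{(\floor{p_0 m})} < x_0 &\iff N_+ \leq \floor{p_0 m} - 1,\\
Z^{(\ceil{(1-p_0)m})} > -x_0 &\iff N_- \leq m - \ceil{(1-p_0)m} = \floor{p_0 m},
\end{align*}
where the last equality uses the identity $m - \ceil{(1-p_0)m} = \floor{p_0 m}$ (valid since $m$ is an integer).

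Next, by the assumption of the lemma, the indicators $\indc{Z_j \geq x_0}$ and $\indc{Z_j \leq -x_0}$ are independent (across $j$, within each sum) Bernoulli variables with success probability at most $q$, so $\EE N_+ \leq mq$ and $\EE N_- \leq mq$. Hoeffding's inequality for sums of independent $[0,1]$-valued variables then yields
\begin{align*}
\PP\qth{N_+ \geq \floor{p_0 m}} &\leq \exp\sth{-\frac{2(\floor{p_0 m} - mq)^2}{m}}, \\
\PP\qth{N_- \geq \floor{p_0 m} + 1} &\leq \exp\sth{-\frac{2(\floor{p_0 m} + 1 - mq)^2}{m}}.
\end{align*}
Using $\floor{p_0 m} \geq p_0 m - 1$ (and $\floor{p_0 m} + 1 \geq p_0 m$), both exponents are at most $-2m(p_0 - q - 1/m)^2$, and the hypothesis $p_0 > q + 1/m$ makes this bound nontrivial.

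Finally, a union bound over the two failure events combines the two tail probabilities into $2\exp\{-2m(p_0 - q - 1/m)^2\}$, giving the claimed lower bound on the probability that both order statistics lie in $(-x_0, x_0)$. The only conceptual step is the order-statistic / counting equivalence above; once that is in place, everything reduces to a routine Hoeffding calculation, so I do not anticipate a meaningful obstacle. The one minor care-point is keeping track of ceilings and floors so that the constant $1/m$ in the exponent drops out correctly from the slack $\floor{p_0 m} - mq$ versus $m(p_0 - q)$.
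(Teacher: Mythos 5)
Your proof is correct and follows essentially the same route as the paper's: both reduce the order-statistic event to a tail bound on a sum of independent Bernoulli indicators with success probability at most $q$, evaluated at the threshold $\floor{p_0 m}$. The only cosmetic difference is that you invoke Hoeffding's inequality directly, whereas the paper bounds the sum by a $\Binom(m,q)$ variable via stochastic dominance and then applies the Chernoff bound combined with Pinsker's inequality ($h_q(a)\geq 2(a-q)^2$), which yields the identical exponent.
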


	\begin{proof}
		Let $B_i=\indc{Z_i\geq x_0}$ for all $i=1,\dots,m$.  This implies
		\begin{align*}
			&\PP\qth{Z^{(\floor{p_0m})}\geq x_0}
			\\
			&= \PP\qth{\sum_{i=1}^m B_i\geq \floor{mp_0}}
			\leq \PP\qth{\sum_{i=1}^m B_i\geq mp_0-1}
		\end{align*} 
		Note that $B_i$'s are \iid Bernoulli random variables with success probability $q_i=\PP\qth{B_i\geq x_0}\leq q$.	Let $S\sim \Binom(m,q)$. Then using stochastic dominance, we get that 
		$$
		\PP\qth{\sum_{i=1}^m B_i\geq mp_0-1}
		\leq \PP\qth{S\geq mp_0-1}.
		$$
		Note the Chernoff inequality for a $\Binom(m,q)$ random variable \cite[Section 2.2]{boucheron2013concentration}:
		\begin{align}
			\label{eq:chernoff}
			\begin{gathered}
				\PP\qth{S\geq ma}
				\leq \exp\pth{-mh_q(a)};\quad q<a<1,\\
				h_q(a)=a\log{a\over q}+(1-a)\log{1-a\over 1-q}.
			\end{gathered}
		\end{align}
		Further, using the Pinsker inequality between the Kulback-Leibler divergence and the total variation distance \cite[Theorem 4.19]{boucheron2013concentration}
		we get $h_q(a)\geq 2(q-a)^2$. Combining this with the last display with $a=p_0-\frac 1m$ we get 
		$$
		\PP\qth{Z^{(\floor{p_0m})}\geq x_0}
		\leq e^{-2m(p_0-q-\frac 1m)^2}.
		$$
		The lower tail bound can be obtained in a similar fashion by considering the random variables $-Z_1,\dots,-Z_m$.
	\end{proof}

	%



	\bibliographystyle{apalike}
	
	\bibliography{References}

\end{document}

